\documentclass[reqno, final, a4paper]{amsart}
\usepackage{color}
\usepackage[colorlinks=true,allcolors=blue
]{hyperref}
\usepackage{amsmath, amssymb, amsthm}
\usepackage{mathrsfs}
\usepackage{mathtools}
\usepackage[noabbrev,capitalize]{cleveref}
\crefname{equation}{}{}
\usepackage{fullpage}
\usepackage{setspace}
\usepackage{graphics}
\usepackage{pifont}
\usepackage{cancel}
\usepackage{tikz}
\usepackage{bbm}
\usepackage[utf8]{inputenc}
\usepackage[T1]{fontenc}
\usetikzlibrary{arrows.meta}

\usepackage{environ}
\usepackage{paralist}
\usepackage{url}
\usepackage[linesnumbered,ruled,vlined]{algorithm2e}
\usepackage[noend]{algpseudocode}
\usepackage[labelfont=bf]{caption}
\usepackage{framed}
\usepackage[framemethod=tikz]{mdframed}
\usepackage{pgfplots}
\pgfplotsset{compat=1.18}
\usepackage{appendix}
\usepackage{graphicx}
\usepackage[textsize=tiny]{todonotes}
\usepackage{tcolorbox}
\usepackage[usenames,dvipsnames,x11names]{xcolor}
\usepackage[shortlabels]{enumitem}
\crefformat{enumi}{#2#1#3}
\crefrangeformat{enumi}{#3#1#4 to~#5#2#6}
\crefmultiformat{enumi}{#2#1#3}
{ and~#2#1#3}{, #2#1#3}{ and~#2#1#3}
\allowdisplaybreaks

\usepackage{euflag}
\usepackage{cases}
\usepackage{aliascnt}
\usepackage{MnSymbol}

\usepackage[backend = biber, doi = false, url = false, isbn = false, maxbibnames = 222]{biblatex}
\usepackage[mathlines]{lineno}
\usepackage{etoolbox} 

\newcommand*\linenomathpatch[1]{%
	\expandafter\pretocmd\csname #1\endcsname {\linenomath}{}{}%
	\expandafter\pretocmd\csname #1*\endcsname{\linenomath}{}{}%
	\expandafter\apptocmd\csname end#1\endcsname {\endlinenomath}{}{}%
	\expandafter\apptocmd\csname end#1*\endcsname{\endlinenomath}{}{}%
}
\newcommand*\linenomathpatchAMS[1]{%
	\expandafter\pretocmd\csname #1\endcsname {\linenomathAMS}{}{}%
	\expandafter\pretocmd\csname #1*\endcsname{\linenomathAMS}{}{}%
	\expandafter\apptocmd\csname end#1\endcsname {\endlinenomath}{}{}%
	\expandafter\apptocmd\csname end#1*\endcsname{\endlinenomath}{}{}%
}

\expandafter\ifx\linenomath\linenomathWithnumbers
\let\linenomathAMS\linenomathWithnumbers
\patchcmd\linenomathAMS{\advance\postdisplaypenalty\linenopenalty}{}{}{}
\else
\let\linenomathAMS\linenomathNonumbers
\fi

\linenomathpatchAMS{gather}
\linenomathpatchAMS{multline}
\linenomathpatchAMS{align}
\linenomathpatchAMS{alignat}
\linenomathpatchAMS{flalign}
\linenomathpatch{equation}
\linenomathpatch{numcases}
\linenomathpatch{figure}

\renewbibmacro{in:}{}

\newtheorem{thm}{Theorem}[section]

\newaliascnt{cor}{thm}
\newtheorem{cor}[cor]{Corollary}
\aliascntresetthe{cor}

\newaliascnt{lem}{thm}
\newtheorem{lem}[lem]{Lemma}
\aliascntresetthe{lem}

\newaliascnt{fact}{thm}
\newtheorem{fact}[fact]{Fact}
\aliascntresetthe{fact}

\newaliascnt{prop}{thm}
\newtheorem{prop}[prop]{Proposition}
\aliascntresetthe{prop}

\newaliascnt{conj}{thm}
\newtheorem{conj}[conj]{Conjecture}
\aliascntresetthe{conj}

\newaliascnt{proc}{thm}

\aliascntresetthe{proc}

\newtheorem{claim}{Claim}
\newenvironment{proofclaim}[1][Proof of claim]{\begin{proof}[#1]}{\end{proof}}
\theoremstyle{definition}

\newtheoremstyle{break}
{\topsep}{\topsep}%
{}{}%
{\bfseries}{}%
{\newline}{}%
\theoremstyle{break}

\newtheorem{proto-theo}[thm] {Proto-Theorem}   

\theoremstyle{definition}
\newtheorem{rem}[thm]{Remark}

\def\Pr{\mathop{\mathbb{P}}\nolimits}

\let\:=\colon
\let\phi=\varphi

\newcommand{\dist}{\operatorname{d}}

\newcommand{\cC}{\mathcal{C}}

\newcommand{\cG}{\mathcal{G}}

\newcommand{\cP}{\mathcal{P}}
\newcommand{\cQ}{\mathcal{Q}}
\newcommand{\cR}{\mathcal{R}}
\newcommand{\cS}{\mathcal{S}}

\newcommand{\cU}{\mathcal{U}}
\newcommand{\cV}{\mathcal{V}}

\definecolor{ipelightblue}{rgb}{0.678, 0.847, 0.902}

\usepackage{tikz} 
\usepackage{array,color,colortbl}

\renewcommand{\geq}{\geqslant}
\renewcommand{\leq}{\leqslant}

\renewcommand{\emptyset}{\varnothing}
\def\bL{\mathbf{L}}

\providecommand{\col}{\cellcolor{ipelightblue}}

\let\epsilon\varepsilon

\title[Universal probability bounds for partial Latin squares]%
{Universal probability bounds for partial Latin squares}

\author[Allsop]{Jack Allsop}

\address[J. Allsop]{Institut f\"ur Mathematik, Freie Universit\"at Berlin, Germany}
\email{allsop@mi.fu-berlin.de}

\author[Morris]{Patrick Morris}
\address[P. Morris]{Departament de Matem\`atiques, Universitat Polit\`ecnica de Catalunya (UPC),  Barcelona, Spain}
\email{pmorrismaths@gmail.com}

\thanks{J. Allsop was funded by the Deutsche Forschungsgemeinschaft (DFG, German Research 
	Foundation) under Germany's Excellence Strategy – The Berlin Mathematics 
	Research Center MATH+ (EXC-2046/2, project ID: 390685689).}

\thanks{
	P. Morris was supported  by the European Union's Horizon Europe   Marie Sk\l{}odowska-Curie grant RAND-COMB-DESIGN - project number
	101106032 {\euflag} and by a Ram\'on y Cajal fellowship (RYC2024-049272-I).}
    \thanks{
	Part of the research on this project was conducted during the visit of the first  author to UPC Barcelona supported by the Bilateral AEI+DFG Project PCI2024-155080-2: SRC-ExCo- Structure, Randomness and Computational Methods in Extremal
	Combinatorics.}

\date{\today}	

\begin{document}
	\onehalfspace 
	
	\begin{abstract}
		This paper studies the probability of substructures occurring in random Latin squares. Our main result states that if  $\alpha,\beta>0$ are such that $2\alpha+\beta<1$,  then there are positive constants $\delta = \delta(\alpha, \beta)$ and $\Delta = \Delta(\alpha, \beta)$ such that if $P$ is a partial Latin square of order $n$ with $k = k(n)$ non-empty cells occupying at most $\alpha n$ rows and $\beta n$ columns, the probability that a random Latin square of order $n$ contains $P$ lies between $(\delta/n)^k$ and $(\Delta/n)^k$. We apply this result to subsquares in random Latin squares to obtain the first proof of the fact that the expected number of subsquares of order $3$ in a random Latin square of order $n$ is non-vanishing as $n \to \infty$. We are also able to provide the best known asymptotics for the expected number of subsquares of order $a$ in a random Latin square of order $n$ when $2<a=o(n^{1/2})$. Finally, we discuss the implications of our result on other configurations in random Latin squares as well as on completions of partial Latin squares. 
	\end{abstract}
	
	
	\maketitle
	
	\section{Introduction} 
 Many of the techniques developed to handle random combinatorial objects tend to fail for random Latin squares, due to their lack of independence and recursive structure, as well as the difficulty in making small perturbations to a Latin square to create a new one. This makes random Latin squares notoriously difficult objects to study. Despite this, there have been some significant recent advances in the study of random Latin squares \cite{subsqrandomrect,decomp, kwan2020almost,LSparity,KSSS}. However, many fundamental problems regarding random Latin squares are wide open. In this paper, we study the probability of substructures occurring in random Latin squares. Despite being  a natural problem that is fundamental to our understanding of random Latin squares, it has previously only been settled for a small handful of substructures. Here we are able to obtain the right shape of probability for a large class of substructures. 
	
	A \textit{partial Latin square of order $n$} is an $n \times n$ matrix such that each cell is either empty or filled with one of $n$ symbols, with no symbol being repeated in any row or column. A partial Latin square in which every cell is filled is a \textit{Latin square}.   
	Let $P$ be a partial Latin square of order $n$. If $(r, c)$ is a non-empty cell of $P$, then we denote the symbol in row $r$ and column $c$ of $P$ by $P[r, c]$. An \emph{entry} of $P$ is a triple $\left(r, c, P[r, c]\right)$ where $(r, c)$ is a non-empty cell of $P$. We will identify $P$ with the set of its entries. This allows us to use set notation such as $(1, 2, 3) \in P$, meaning that $P[1, 2] = 3$. We thus use $|P|$ to denote the number of non-empty cells of $P$ and we say that a Latin square $L$ of order $n$ \emph{contains} $P$, denoted $L \supseteq P$, if it contains each entry of $P$.
	
	For a partial Latin square $P$ of order $n$, we let $\cR_P$ be the set of rows featuring non-empty cells of $P$, $\cC_P$ the set of columns containing non-empty cells of $P$, and $\cS_P$ the set of symbols appearing in $P$.  We can now state our main result.
	
	\begin{thm} \label{t:main}
		Let $\bL$ be a random Latin square of order $n$ and let $P$ be a partial Latin square of order $n$. Let $\alpha, \beta > 0$ be such that $|\cR_P| \leq \alpha n$ and $|\cC_P| \leq \beta n$ and suppose that $2\alpha+\beta<1$. Then
		\[ 
		\left(\frac{\delta}{n}\right)^{|P|}\leq \Pr(\mathbf{L}\supseteq P)\leq \left(\frac{\Delta}{n}\right)^{|P|}
		\]
		for positive constants $\delta=\delta(\alpha,\beta)$ and $\Delta=\Delta(\alpha,\beta)$ that satisfy $\delta\rightarrow 1/23$ and $\Delta\rightarrow 23$ as $\alpha, \beta \rightarrow 0$.  
	\end{thm}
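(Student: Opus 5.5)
We will compare $\Pr(\mathbf{L}\supseteq P)$ with $\Pr(\mathbf{L}\supseteq P')$ for $P' = P\setminus\{e\}$ with a single entry $e$ removed. Removing the entries of $P$ one at a time then telescopes to the claimed bounds. The core statement we aim for is a one-step estimate: if $Q$ is a partial Latin square with $\mathcal{R}_Q\subseteq\mathcal{R}_P$ and $\mathcal{C}_Q\subseteq \mathcal{C}_P$, and $e=(r,c,s)$ is an entry compatible with $Q$ whose row and column also lie in $\mathcal{R}_P$ and $\mathcal{C}_P$, then
\[
\frac{\delta}{n}\leq \frac{\Pr(\mathbf{L}\supseteq Q\cup\{e\})}{\Pr(\mathbf{L}\supseteq Q)}\leq \frac{\Delta}{n}.
\]
Since $\mathbf{L}$ is uniform, this ratio equals $|\mathcal{L}_{Q\cup\{e\}}|/|\mathcal{L}_Q|$, where $\mathcal{L}_Q$ denotes the set of Latin squares containing $Q$. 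Note that $\mathcal{L}_Q$ is the disjoint union over symbols $t$ of the sets $\mathcal{L}_{Q\cup\{(r,c,t)\}}$. It therefore suffices to show that for any two admissible symbols $s,t$ for cell $(r,c)$,
\[
|\mathcal{L}_{Q\cup\{(r,c,s)\}}|\leq C\,|\mathcal{L}_{Q\cup\{(r,c,t)\}}|
\]
with $C$ a constant close to $23$. We also need that a constant fraction of the $n$ symbols are admissible; this holds because row $r$ of $Q$ has at most $\beta n$ filled cells and column $c$ at most $\alpha n$, so at least $(1-\alpha-\beta)n$ symbols are available. Averaging over $t$ then gives both bounds, up to a factor of about $1/(1-\alpha-\beta)$ in the lower bound.

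To compare the two counts we will use a double-counting or switching argument. Given $L\in\mathcal{L}_{Q\cup\{(r,c,s)\}}$, we look for a cycle switch that changes the symbol at $(r,c)$ from $s$ to $t$ while leaving every entry of $Q$ intact. The natural choice is the row-cycle of $L$ on symbols $s,t$ through cell $(r,c)$, or its column or symbol analogue. Such a cycle may be long and may hit cells of $Q$. We therefore instead use short local switches: choose rows $r'$ and columns $c'$ outside $\mathcal{R}_P\cup\mathcal{C}_P$, and use a bounded-size trade (for instance an intercalate-style or $2\times 3$ trade after preprocessing) that involves $(r,c)$ and otherwise only cells in rows not in $\mathcal{R}_P$ and columns not in $\mathcal{C}_P$.

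The counting then works as follows. For each $L$ we find at least $c_1 n^{a}$ valid switches, and each $L'\in\mathcal{L}_{Q\cup\{(r,c,t)\}}$ is produced by at most $c_2 n^{a}$ pairs $(L,\text{switch})$. This gives $C=c_2/c_1$. The hypothesis $2\alpha+\beta<1$ enters in the lower count. A switch will use a row outside $\mathcal{R}_P$, whose entries are fixed by $L$, and we need the symbol $t$ to appear in row $r$ and the symbol $s$ in column $c$ at positions avoiding the forbidden rows and columns. In a natural sequence of choices the excluded options number about $2\alpha n+\beta n$ (two row-type restrictions and one column-type restriction), leaving $(1-2\alpha-\beta)n$ good choices. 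As $\alpha,\beta\to0$ the constant tends to a fixed absolute value, which for the chosen trade should come out as $23$.

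The main obstacle is the design of the switch itself. It must touch only $(r,c)$ among the protected cells, be reversible with a bounded number of preimages, and exist in linearly many ways for \emph{every} Latin square containing $Q\cup\{(r,c,s)\}$, since we cannot assume any randomness-derived structure such as many intercalates. I would first try a trade on at most three rows and three columns. From $(r,c)$, go along row $r$ to the cell containing $t$, then down that column to a row $r'\notin\mathcal{R}_P$, and close up using the position of $s$ in row $r'$. This is a chain of length three, and if it fails to close one extra step is allowed. The case analysis showing that all but $O((\alpha+\beta)n)$ of these chains close into valid trades, with bounded multiplicity in reverse, is where the constant $23$ and the condition $2\alpha+\beta<1$ are determined.
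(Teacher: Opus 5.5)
Your outer reduction matches the paper: the chain rule one entry at a time, switches that avoid the cells of $P$, and a count of excluded choices of about $(2\alpha+\beta)n$. But the argument stops where the real work begins. No switch is specified, and both properties you ask of it fail for the natural candidate.

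First, bounded-size trades cannot give a forward count valid for \emph{every} square containing $Q\cup\{(r,c,s)\}$. In the Cayley table of $\mathbb{Z}_p$ with $p$ prime, every row and column cycle has length $p$. It is also known that every Latin trade in this square has size tending to infinity with $p$. The paper's switch $\eta_L(r,r',c)$ is your chain with an unbounded closing step:
\begin{itemize}
\item choose $r'\notin\cR_P$ freely;
\item let $c'$ be the column of $L[r',c]$ in row $r$, and $r''$ the row of $s=L[r,c]$ in column $c'$;
\item close with the full row cycle $\rho_L(r',r'',c')$ (Type Two) or the partial row cycle $\rho_L(r',r'',c''\rightarrow c)$ (Type Three).
\end{itemize}
These cycles may be long, but they lie in rows $r',r''\notin\cR_P$, so they never touch $P$. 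Excluding $c'\in\cC_P$ and $r''\in\cR_P$ is exactly where $2\alpha+\beta$ comes from. Note also that your pairwise target with $C\to 23$ is stronger than what the paper proves; its one-step bounds only give $C\le \Delta/\delta\approx 23^2$. The paper instead compares $\{L[r,c]=s\}$ with its complement, and the free choice of $r'$ supplies linearly many switches.

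Second, and this is the missing idea, the reverse multiplicity is not bounded. Suppose that in the image square $L'$ the structure $\eta_{L'}(r,r',c)$ is an intercalate (Type One). Then a Type Two preimage may have used any row $r''$, and $r''$ cannot be recovered from $L'$. So $L'$ can have up to $n$ preimages (Lemma~\ref{l:eta_reverse}). No worst-case repair is available. What is needed is an averaged statement: conditional on $\bL\supseteq Q$, the probability that $\eta_{\bL}(r,r',c)$ is of Type One is at most $D/n$. Then the total reverse count over the target class $\mathcal{B}$ is at most $n\cdot(D/n)|\mathcal{B}|+|\mathcal{B}|=(D+1)|\mathcal{B}|$.

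This conditional intercalate bound, Theorem~\ref{t:interc}, is the main technical result of the paper. Iterating $\eta$-switches along a chain $r_0,r_1,r_2,\dots$ gives a recurrence bounding $p_{j-1}$ by roughly $(np_j+1)/(n(1-2\alpha-\beta))$. The base case $p_2\le 20(1+o(1))/(n(1-\alpha))$ is proved with row and column cycle switches, cross-switching, and a Cauchy--Schwarz averaging over the length of a column cycle. Two iterations give $D\to 22$, and then $23=D+1$.

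So the constant $23$ does not come from a local case analysis. Your remark that one cannot rely on randomness-derived structure such as intercalates points the wrong way. The argument needs exactly a randomness-derived bound saying that intercalates through prescribed rows and a prescribed column are rare under the conditioning.
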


\begin{rem} 
   It is possible to permute the roles of rows, columns and symbols in  Latin squares. For instance, for each Latin square $L$ there is a Latin square $\tilde{L}:=\{(s,c,r):(r,c,s)\in L\}$ obtained by switching the roles of the rows and the symbols. Using such permutations, one can adjust our proof to hold under the  more general condition that there exist two distinct sets $\cU,\cV\in \{\cR_P,\cC_P,\cS_P\}$ such that $|\cU|\leq \alpha n$ and $|\cV|\leq \beta n$. 
\end{rem}

We will return to discuss the tightness of the constants $\delta,\Delta>0$ and the condition $2\alpha+\beta<1$ in Section \ref{ss:tightness}. First though, we present some implications of Theorem \ref{t:main}.

\subsection{Latin subsquares}
By far the most studied partial Latin squares in random Latin squares have been Latin subsquares. A \emph{subsquare} of a Latin square $L$ is a partial Latin square $P\subseteq L$  whose non-empty cells form a Latin square. A \emph{proper subsquare} of $L$ is a subsquare whose order lies strictly between $1$ and $n$. The largest possible order of a proper subsquare of $L$ is $n/2$. An \emph{intercalate} is a subsquare of order $2$. 
A major goal in this area, initiated by McKay and Wanless \cite{manysubsq} in 1999,  has been to determine (asymptotically) the expected number of subsquares of a given order $a$ in a random Latin square of order $n$. We denote this quantity by $\mathbb{E}_a(n)$. They made the following conjecture. 

\begin{conj}[McKay and Wanless \cite{manysubsq}] \label{conj:MW}
    For $2\leq a\leq n/2$, one has 
    \begin{subnumcases}
        {\mathbb{E}_a(n)=}
       \frac{n^2}{4}(1+o(1)) & if  $a=2$; \label{MW2}\\ 
        \frac{1}{18}(1+o(1)) & if  $a=3$; \label{MW3} \\
        o(1) & if $4\leq a\leq n/2$. \label{MW4+}
        \end{subnumcases}  
\end{conj}

 They also provided an initial estimate of $\mathbb{E}_2(n)\geq n^{3/2-o(1)}$ \cite{manysubsq}. Cavenagh, Greenhill and Wanless \cite{cycstrucrandom} then provided an upper bound of $\mathbb{E}_2(n)\leq \frac{9}{2}n^{5/2}$ in 2008. In 2018, Kwan and Sudakov \cite{KS} showed that $\frac{n^2}{4}(1-o(1))\leq \mathbb{E}_2(n)\leq \frac{n^2}{2}(1+o(1))$ before part \eqref{MW2} of Conjecture \ref{conj:MW} was completely resolved by Kwan, Sah and Sawhney \cite{KSS} in 2022.   The same authors with Simkin \cite{KSSS} then provided tight tail bounds for the count of intercalates in random Latin squares, thus providing a complete picture for this case. Until recently, there was little progress on part \cref{MW4+} of Conjecture \ref{conj:MW} with the exception of a result of McKay and Wanless \cite{manysubsq} themselves who proved the statement when $a=n/2$. In a recent flurry of results,  Divoux, Kelly, Kennedy and Sidhu \cite{subsqrandom} and independently Gill, Mammoliti and Wanless \cite{canonicallabel} extended the range to $a\geq  n^{1/2}(\log n)^{1/2+o(1)}$  (the former result being slightly stronger by requiring some large constant in place of the  $(\log n)^{o(1)}$ factor). Finally Allsop and Wanless \cite{subsqrandomrect} settled part \eqref{MW4+} of Conjecture \ref{conj:MW}  by showing that $\mathbb{E}_a(n)=O(n^{-2})$ for $4\leq a\leq n/2$. Using a union bound and Markov's inequality, this in fact shows that asymptotically almost surely (a.a.s.\ for short), that is, with probability tending to 1 as $n$ tends to infinity, one has that a random Latin square $\bL$ does not contain any $a \times a$ subsquare with $a\geq 4$. 

 As predicted by McKay and Wanless \cite{manysubsq}, part \eqref{MW3} of Conjecture \ref{conj:MW} has been the most challenging part  and remains unresolved to date. Allsop and Wanless \cite{subsqrandomrect} were the first to give a constant upper bound on $\mathbb{E}_3(n)$, showing that $\mathbb{E}_3(n)\leq \frac{2}{3}+o(1)$. As an application of Theorem \ref{t:main}, we complement this by proving that $\mathbb{E}_3(n) = \Omega(1)$. This  marks significant progress towards a resolution of Conjecture~\ref{conj:MW} \eqref{MW3} and such a lower bound was highlighted as an open problem in the thesis of the first author~\cite{thesis}.

	\begin{thm}\label{t:E3n}
		\[
		\mathbb{E}_3(n) \geq \frac1{219006}-o(1).
		\]
	\end{thm}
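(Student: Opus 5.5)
The plan is to count subsquares of order $3$ through their $2\times 3$ Latin subrectangles. This way Theorem~\ref{t:main} only has to pay for three cells rather than nine. Applying Theorem~\ref{t:main} directly to each of the $\binom n3^3\cdot 12\approx n^9/18$ possible $3\times3$ subsquares $P$ (with $|P|=9$) gives only $\mathbb{E}_3(n)\ge \delta^9/18$. The target constant $219006=18\cdot 23^3$ suggests paying the factor $\delta\approx 1/23$ for just three cells, namely one extra row of the subsquare.

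\textbf{Step 1 (rectangles).} A $2\times3$ Latin subrectangle on rows $r_1,r_2$ is exactly a $3$-cycle of the permutation $\sigma_{r_1r_2}$ of the symbols defined by $\sigma_{r_1r_2}(\bL[r_1,c])=\bL[r_2,c]$. I would show that for a fixed pair of rows the expected number of $3$-cycles of $\sigma_{r_1r_2}$ is $\frac13-o(1)$, so that the expected number $R$ of $2\times 3$ Latin subrectangles satisfies $R\ge (1-o(1))\binom n2\cdot\frac13=(1-o(1))\frac{n^2}{6}$. This step needs its own input, since Theorem~\ref{t:main} only gives this up to constants. I would try to obtain it from known results on the cycle structure of row permutations of random Latin squares: roughly, $\sigma_{r_1r_2}$ behaves like a uniform derangement, and one would need these to be strong enough to give the constant $\frac13$.

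\textbf{Step 2 (completing the third row).} Fix a $2\times3$ Latin subrectangle $Q$ on rows $r_1,r_2$, columns $c_1,c_2,c_3$ and symbols $S$. For any third row $r_3$, the entries in row $r_3$ that complete $Q$ to a $3\times3$ subsquare are forced; call this set of three entries $T_{r_3}$, so that $Q\cup T_{r_3}$ is a partial Latin square. I would prove the conditional estimate
\[
\Pr(\bL\supseteq Q\cup T_{r_3}) \ge \Big(\frac{\delta}{n}\Big)^{3}\Pr(\bL\supseteq Q).
\]
Since $|\cR|=3$ and $|\cC|=3$, the small-support hypothesis $2\alpha+\beta<1$ holds with $\alpha,\beta\to0$, so $\delta\to 1/23$. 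Theorem~\ref{t:main} as stated is unconditional. I would therefore rerun its proof, which presumably adds entries one at a time via switchings and bounds each ratio $\Pr(\bL\supseteq P\cup\{e\})/\Pr(\bL\supseteq P)$ from below by $\delta/n$. If that is not how the proof goes, I would instead extract a conditional version of its lower-bound mechanism.

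\textbf{Step 3 (double counting).} Summing the estimate of Step~2 over all $Q$ and over the $n-2$ choices of $r_3$, and noting that each subsquare of order $3$ is counted three times (once for each choice of the pair of rows playing the role of $Q$), gives
\[
\mathbb{E}_3(n)\ \ge\ \frac{1}{3}\, R\,(n-2)\Big(\frac{\delta}{n}\Big)^3\ \ge\ (1-o(1))\frac{\delta^3}{18}.
\]
With $\delta\to 1/23$ this yields $\frac{1}{18\cdot 23^3}-o(1)=\frac1{219006}-o(1)$.

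I expect the main obstacles to be Step~1, obtaining the sharp constant for $3$-cycles rather than a constant-factor bound, and making Step~2 rigorous as a conditional statement. If Step~1 only gives $R=\Omega(n^2)$, the same argument still yields $\mathbb{E}_3(n)=\Omega(1)$, but with a worse constant.
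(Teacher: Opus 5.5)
\textbf{The gap is Step~1.} Your constant depends on it, and nothing in the paper provides it. You need $R\ge(1-o(1))n^2/6$, that is, that the expected number of $3$-cycles of $\sigma_{r_1r_2}$ is at least $\frac13-o(1)$. A $2\times 3$ Latin rectangle $Q$ is not $1$-degenerate: every entry shares its row, its column and its symbol with other entries of $Q$. So however you order its six entries, the last one has no new row, column or symbol. Lemma~\ref{l:newrcs} and Remark~\ref{rem:symm} therefore do not apply to it, and Lemma~\ref{l:LB} only gives $\delta/n$ with $\delta\approx 1/23$. The paper's tools thus yield only $R\ge(1-o(1))\delta n^2/6$.

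The heuristic that $\sigma_{r_1r_2}$ ``behaves like a uniform derangement'' is not a result you can invoke. The cycle-structure results for two rows cited in the paper (Cavenagh, Greenhill and Wanless) give only coarse information, not sharp constants. Even the analogous statement for $2$-cycles, namely $\mathbb{E}_2(n)\sim n^2/4$, required the work of Kwan, Sah and Sawhney. Feeding $R\ge(1-o(1))\delta n^2/6$ into your Steps~2--3 as written gives only $\delta^4/18$.

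\textbf{The repair is to move one factor of $\delta$ from the third row to $Q$.} In Step~2 you pay $\delta/n$ for all three entries of row $r_3$. But the first entry you add there lies in row $r_3\notin\cR_Q$, so Lemma~\ref{l:newrcs} gives it conditional probability at least $(1-o(1))/n$. Only the other two entries need Lemma~\ref{l:LB}. Lemma~\ref{l:LB} is already the conditional statement you wanted, so there is no need to rerun the proof of Theorem~\ref{t:main}. This gives
$\Pr(\bL\supseteq Q\cup T_{r_3})\ge(1-o(1))\delta^2n^{-3}\Pr(\bL\supseteq Q)$.

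For $Q$ itself, use five entries via Lemma~\ref{l:newrcs} and Remark~\ref{rem:symm} and the sixth via Lemma~\ref{l:LB}, giving $R\ge(1-o(1))\delta n^2/6$. Your double count in Step~3, which is correct, then yields $\delta^3/18=1/219006$. If Step~1 were available, the same observation would even give $\delta^2/18$.

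This repaired argument is exactly the paper's proof. The paper fixes one $3\times3$ subsquare and orders its nine entries so that the first six form the $2\times3$ rectangle on rows $1,2$ and the seventh opens row $3$. It pays the factor $1/23$ only at the sixth, eighth and ninth entries.
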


	 We are also able to provide estimates on the asymptotics for $\mathbb{E}_a(n)$ for larger $a$ which significantly improve the asymptotics of Allsop and Wanless \cite{subsqrandomrect} in the range $4\leq a\leq n/2$.
	
	\begin{thm}\label{t:Emn_asymptotics}
		Suppose that $2\leq a \leq n/2$. Then
		\[
		\left(\frac{1-o(1)}{23e^2}\right)^{a^2}\left(\frac an\right)^{a^2-3a} \leq \mathbb{E}_a(n) \leq \left(\frac{23+o(1)}{e^{1/2}}\right)^{a^2}\left(\frac an\right)^{a^2-3a}.
		\]
	\end{thm}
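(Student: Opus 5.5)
The plan is a first-moment computation: write $\mathbb{E}_a(n)$ as a sum over all possible subsquares, bound each probability with Theorem~\ref{t:main} where it applies, and handle the remaining large-$a$ range separately. A subsquare of order $a$ is determined by a set $R$ of $a$ rows, a set $C$ of $a$ columns, a set $S$ of $a$ symbols, and a Latin square on $R\times C$ with symbol set $S$. By linearity of expectation,
\[
\mathbb{E}_a(n)=\sum_{R,C,S}\ \sum_{P}\Pr(\bL\supseteq P)
\]
where the inner sum has $L_a$ terms, $L_a$ being the number of Latin squares of order $a$. Each such $P$ has $|P|=a^2$ and $|\cR_P|=|\cC_P|=a$.

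\textbf{Small $a$.} Suppose $a\leq \epsilon n$ with $\epsilon=\epsilon(n)\to 0$ slowly. Then Theorem~\ref{t:main} applies with $\alpha=\beta=\epsilon$, and $2\alpha+\beta<1$ holds. Since $\epsilon\to 0$, we get $\delta\to 1/23$ and $\Delta\to 23$, so every term lies between $((1-o(1))/(23n))^{a^2}$ and $((23+o(1))/n)^{a^2}$. For the counting factors I would use $(n/a)^a\leq\binom na\leq (en/a)^a$, which contributes $(n/a)^{3a}$ below and $e^{3a}(n/a)^{3a}$ above.

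For $L_a$ I would use the van der Waerden/Egorychev--Falikman bound below, giving $L_a\geq (a!)^{2a}/a^{a^2}\geq (a/e^2)^{a^2}$. Above, I would use Brégman's bound $L_a\leq\prod_{k=1}^a (k!)^{a/k}=((1+o(1))a/e^2)^{a^2}$.

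Multiplying out, the lower bound becomes
\[
(n/a)^{3a}(a/e^2)^{a^2}\big((1-o(1))/(23n)\big)^{a^2}=\Big(\tfrac{1-o(1)}{23e^2}\Big)^{a^2}(a/n)^{a^2-3a}.
\]
The upper bound picks up a factor $e^{3a-2a^2}$, and $e^{3a-2a^2}\leq e^{-a^2/2}$ precisely when $a\geq 2$. This explains the $e^{1/2}$ in the statement.

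The one delicate point here is bounded $a$, where the Brégman error $e^{O(a\log a)}$ is not of the form $(1+o(1))^{a^2}$. For $a\leq a_0$ I would instead bound $L_a$ directly, for example by $L_a\leq a!^{\,a}$. I would then check numerically that the slack between $e^{-a^2/2}$ and the true value $e^{3a-2a^2}$ absorbs the loss, or else treat the finitely many small $a$ by hand.

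\textbf{Large $a$.} Suppose $\epsilon n<a\leq n/2$. This is the main obstacle, since $3a/n$ may exceed $1$, so even the Remark's permutation trick does not make Theorem~\ref{t:main} applicable. In this range the target bounds are of the form $c^{\Theta(n^2)}$, so I would work with cruder counting.

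For the upper bound, fix the subsquare $P$. I would count Latin squares containing $P$ row by row: each remaining row is a perfect matching in a bipartite graph with prescribed degrees, so Brégman's bound applies. I would compare this with the total number of Latin squares, which van der Waerden bounds from below by $(n!)^{2n}/n^{n^2}$. This should give $\Pr(\bL\supseteq P)\leq (C/n)^{a^2}$ with an explicit $C$.

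For the lower bound, Evans' theorem (for $a\leq n/2$) ensures $P$ has completions. I would then lower-bound the number of completions: first complete the $a$ rows using Hall's theorem, then complete the remaining rows using van der Waerden on the resulting Latin rectangle. This lower bound is divided by Brégman's upper bound on the total number of Latin squares.

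In this regime $(a/n)^{a^2}$ is itself $c^{a^2}$, so I expect the constants to fit inside the claimed bounds. Confirming that $23$ really suffices there, possibly by appealing to the known $\mathbb{E}_a(n)=O(n^{-2})$ result of Allsop and Wanless for the upper bound, is the step I am least sure of.
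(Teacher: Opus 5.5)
Your small-$a$ argument is essentially the paper's Case~1. For $a=\Omega(n)$ you take a different route. The paper uses \cref{prop:rectangles}, i.e.\ Luria--Simkin's count of Latin rectangles plus the McKay--Wanless completion-ratio theorem (\cref{t:ext}), and cites McKay--Wanless separately for $a=n/2$. You instead count completions of $P$ in the full square directly with Br\'egman and van der Waerden. That route works and is more self-contained (it also covers $a=n/2$), but your lower bound as written does not.

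Fixing one extension of the first $a$ rows by Hall's theorem, and then counting completions of that single $a\times n$ rectangle, throws away the choice of the $a\times(n-a)$ block on the unused columns and symbols. That is a factor $n^{(1-o(1))a(n-a)}$. Since $a(n-a)\ge a^2$, what remains is roughly $n^{-an}e^{O(n^2)}$, nowhere near $(23n)^{-a^2}$. You have to count those rows as well:
\begin{itemize}
\item row $i\le a$ is a perfect matching in an $(n-a-i+1)$-regular bipartite graph on the $n-a$ unused columns and symbols;
\item row $i>a$ is a perfect matching in an $(n-i+1)$-regular graph on all columns and symbols.
\end{itemize}
So van der Waerden and Br\'egman apply to every row, and they agree up to $e^{O(n\log^2 n)}=(1+o(1))^{a^2}$ once $a\gg\sqrt n\log n$. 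Carrying this out gives $\Pr(\bL\supseteq P)=(c(\alpha)(1+o(1))/n)^{a^2}$ with exactly the paper's $c(\alpha)$, which lies between $e^2/8$ and $1.2$ on $(0,1/2]$. Hence $\mathbb{E}_a(n)=(c(\alpha)e^{-2}(1+o(1)))^{a^2}(a/n)^{a^2-3a}$ sits comfortably inside both bounds; this is the computation you left as ``I expect the constants to fit''. Your fallback would not have rescued it: for $a/n<e^{1/2}/23$ the target upper bound is $e^{-\Theta(n^2)}$, far below the $O(n^{-2})$ of Allsop and Wanless.

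You are right that bounded $a$ needs care. The paper's Case~1 uses $\mathscr{L}(a)=((1+o(1))a/e^2)^{a^2}$, which is only valid as $a\to\infty$. However, your proposed numerical check fails at $a=2,3$. Even with the exact values $L_2=2$, $L_3=12$ and $\binom{n}{a}\le n^a/a!$, the quantity $\binom{n}{a}^3L_a(\Delta/n)^{a^2}$ exceeds the claimed upper bound by a factor $e^2$ at $a=2$ and about $5$ at $a=3$. The crude bound $L_a\le(a!)^a$ combined with $\binom{n}{a}\le(en/a)^a$ only works from about $a=11$ on.

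The fix is to order the entries so that the $2a-1$ entries of the first row and first column come first. Each of these has a new column or a new row, so by \cref{l:newrcs} and \cref{rem:symm} it costs only $(1+o(1))/n$. Only the remaining $(a-1)^2$ entries need the factor $\Delta$ from \cref{t:main}. The extra saving of $23^{2a-1}$ then handles every bounded $a$, even with the crude counting bounds.
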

	
	We remark that we use Theorem \ref{t:main} only for the range where $a=o(n)$; for larger $a$ we can achieve Theorem \ref{t:Emn_asymptotics} by using known estimates on permanents\footnote{A standard technique in this area is to use estimates on permanents of certain matrices to obtain information about Latin squares. See, for example,~\cite[Section $17$]{num_squares}.} and the number of Latin rectangles of certain sizes, as used for example in \cite{canonicallabel,manysubsq}.
    In fact,  when $a\geq \omega(n^{1/2}\log n)$ by using either these permanent estimates  or a result of Divoux, Kelly, Kennedy and Sidhu \cite{subsqrandom}, one can get matching constants in the upper and lower bounds of \cref{t:Emn_asymptotics} which depend on $a$ (see Proposition \ref{prop:rectangles}). 
   
    \subsection{Latin subrectangles} 
Our results generalise to the setting of Latin subrectangles in random Latin squares which to our knowledge has not been previously studied (although we believe it to be very natural). 
Throughout this subsection, let $a = a(n)$ and $b = b(n)$ be positive integer functions of $n$ such that $a \leq b \leq n$. An $a \times b$ \emph{partial Latin rectangle} is an $a \times b$ array where each cell is either empty or filled with one of $b$ symbols such that no symbol appears multiple times in any row or column. A \emph{Latin rectangle} is a partial Latin rectangle with no empty cells. Given a Latin square $L$ of order $n$, a \textit{subrectangle} of $L$ is a submatrix of $L$ that is itself a Latin rectangle. Let $\mathbb{E}_{a, b}(n)$ denote the expected number of $a \times b$ subrectangles in a random Latin square of order $n$.  We can use \cref{t:main} to give bounds on $\mathbb{E}_{a, b}(n)$ as follows. 

\begin{thm}\label{t:Eabn}
    For $\varepsilon> 0$   and $a+b \leq (1-\varepsilon) n$ there exist  $\delta'=\delta'(\varepsilon),\Delta'=\Delta'(\varepsilon)>0$   such that 
		\[
		\delta'^{ab} \left(\frac bn\right)^{ab-2b-a}\left(\frac ba\right)^a\leq \mathbb{E}_{a, b}(n) \leq \Delta'^{ab} \left(\frac bn\right)^{ab-2b-a}\left(\frac ba\right)^a.
		\]
\end{thm}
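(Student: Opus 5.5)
Linearity of expectation reduces $\mathbb{E}_{a,b}(n)$ to counting placements. An $a\times b$ subrectangle of $L$ is determined by a choice of $a$ rows, $b$ columns and a set of $b$ symbols, together with an $a\times b$ Latin rectangle on those symbols placed in the chosen cells. So
\[
\mathbb{E}_{a,b}(n)=\sum_{P}\Pr(\bL\supseteq P),
\]
where $P$ ranges over all partial Latin squares of order $n$ whose non-empty cells form an $a\times b$ combinatorial rectangle filled with a Latin rectangle on some $b$ symbols. The number of such $P$ is
\[
\binom{n}{a}\binom{n}{b}\binom{n}{b}\, R(a,b),
\]
where $R(a,b)$ is the number of $a\times b$ Latin rectangles on a fixed set of $b$ symbols. (We count ordered rectangles in fixed row and column orderings; matching rows and columns to the submatrix is unambiguous.)

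For each such $P$ we have $|P|=ab$, $|\cR_P|=a$ and $|\cC_P|=b$. To apply \cref{t:main} with $2\alpha+\beta<1$, I use the Remark after \cref{t:main}: it permits any two of $\cR_P,\cC_P,\cS_P$ as the small sets. Since $|\cS_P|=b$ and $a\le b$, taking $\alpha=b/n$ would need $2b+a<n$, which is not guaranteed. The better choice is to let the set of size $a$ (rows) play the role weighted by $2$ and a set of size $b$ play the other role, giving the condition $2a+b<n$. This still fails when $a$ is close to $b$ and $a+b\approx(1-\varepsilon)n$, for instance $a=b\approx n/2$.

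This is the step I expect to be the main obstacle: reconciling the hypothesis $a+b\le(1-\varepsilon)n$ with the requirement $2\alpha+\beta<1$. I would try first to check whether the proof of \cref{t:main} actually needs only $\alpha+\beta<1$ when the symbol set also has size at most about $\beta n$, since here $|\cS_P|=b$. Failing that, in the regime $a\ge cn$ I would bypass \cref{t:main}. There the whole rectangle has linear size, and one can count directly: count Latin squares containing a fixed $a\times b$ Latin rectangle by completing it to an $a\times n$ Latin rectangle and then to a square. Both counts are controlled by permanent bounds (Brègman and van der Waerden), as the paper indicates it does for large $a$ in \cref{t:Emn_asymptotics}. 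These bounds give the probability as $(\Theta(1)/n)^{ab}$ with the constants absorbed into $\delta'^{ab}$ and $\Delta'^{ab}$. I expect the genuinely hard part of this regime to be handled by those permanent estimates; in the proposal I commit to that split.

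Once $\Pr(\bL\supseteq P)=(\Theta_\varepsilon(1)/n)^{ab}$ is established, the rest is estimation. Permanent bounds give
\[
R(a,b)=\bigl(\Theta(1)\,b\bigr)^{ab}\cdot\frac{1}{e^{\Theta(ab)}},
\]
that is, $R(a,b)=(\Theta(1)b)^{ab}$, since each row is a near-uniform permutation from a regular bipartite graph of degree at least $b-a\ge\varepsilon' b$. (When $a$ is close to $b$ one uses degree $b-i$ for row $i$; the product $\prod_i (b-i)!^{1/(b-i)}$ is still $(\Theta(b))^{ab}$ up to an $e^{O(ab)}$ factor, which is acceptable.)

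For the binomials, $\binom nb^2=(\Theta(n/b))^{2b}$ since $b\le n$, and $\binom na=(\Theta(n/a))^{a}$. Multiplying everything out,
\[
\Theta(1)^{ab}\, b^{ab}\, n^{-ab}\left(\frac nb\right)^{2b}\left(\frac na\right)^a=\Theta(1)^{ab}\left(\frac bn\right)^{ab-2b-a}\left(\frac ba\right)^a,
\]
where the factors $\Theta(1)^{a+2b}$ are absorbed into $\Theta(1)^{ab}$ because $a+2b\le 3ab$. This gives the claimed bounds with constants $\delta'$ and $\Delta'$.
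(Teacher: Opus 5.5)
Your proposal is correct and is essentially the paper's proof. It uses the same identity $\mathbb{E}_{a,b}(n)=\binom na\binom nb^2\mathscr{L}(a,b)\Pr(\bL\supseteq P)$, applies \cref{t:main} with rows as the doubly-weighted set when $a<cn$ (choose $c<\varepsilon/2$ so that $2c+(1-\varepsilon)<1$), and uses permanent estimates when $a\geq cn$. The only difference is in the linear regime: the paper invokes the Luria--Simkin count (\cref{t:num_rect}) and the McKay--Wanless completion-ratio bound $\exp(O(n\log^2 n))$ (\cref{t:ext}), whereas you derive cruder versions directly from Br\`egman and van der Waerden, which suffices because $ab=\Omega(n^2)$ absorbs that error.
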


In fact, we can get matching constants $\delta'=\Delta'$ in the case where $a = \Omega(n)$ in \cref{t:Eabn}.
Using the upper bound of \cref{t:Eabn}, Markov's inequality and a union bound, we obtain the following strengthening  of the result of Allsop and Wanless. 

\begin{cor} \label{cor:no_rectangles} A.a.s.\ one has that a random Latin square $\bL$ does not contain any $a \times b$ subrectangles with $3\leq a\leq b$ and $(a,b)\neq (3,3)$. 
    \end{cor}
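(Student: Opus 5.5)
The plan is to combine the upper bound of \cref{t:Eabn} with Markov's inequality and a union bound over all admissible pairs $(a,b)$. Let $X_{a,b}$ be the number of $a\times b$ subrectangles of $\bL$. Then
\[
\Pr\Bigl(\textstyle\sum_{(a,b)} X_{a,b}\geq 1\Bigr)\leq \sum_{(a,b)}\mathbb{E}_{a,b}(n),
\]
so it suffices to show that the sum of the upper bounds $\Delta'^{ab}(b/n)^{ab-2b-a}(b/a)^a$ is $o(1)$. I fix a small $\varepsilon>0$ and take $\Delta'=\Delta'(\varepsilon)$ from \cref{t:Eabn}. Whatever upper restriction on $b$ the corollary implicitly carries (for instance, any $a$ rows of $\bL$ already form an $a\times n$ Latin rectangle, so full-width rectangles must be excluded), I will work in the range $a+b\leq(1-\varepsilon)n$ where \cref{t:Eabn} applies. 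I will treat the boundary range $a+b>(1-\varepsilon)n$ separately, as explained below.

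For the bulk range I split according to the exponent $ab-2b-a=(a-2)b-a$. Since $a\geq 3$ and $(a,b)\neq(3,3)$, this exponent is at least $1$, with equality only for $(a,b)=(3,4)$. Its growth in $b$ and $a$ is what produces summability. First, for $a=3$ the term is
\[
\Delta'^{3b}(b/n)^{b-3}(b/3)^3 .
\]
For $4\leq b\leq n/(2\Delta'^{3}e)$, say, consecutive terms decay geometrically, so their sum is $O(n^{-1})$. Second, for $a\geq 4$ the exponent is at least $2b-a\geq b$ and $(b/a)^a\leq e^{b}$, so the term is at most $(C b/n)^{(a-2)b-a}$ for some constant $C$. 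For $b\leq n/(2C)$ the double sum over $a\leq b$ is dominated by its first few terms, giving $O(n^{-2})$, which recovers the $O(n^{-2})$ bound of Allsop and Wanless for subsquares.

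The main obstacle is the range where $b$ is linear in $n$, either $b\geq cn$ with $c$ small or $a+b>(1-\varepsilon)n$. There the constant $\Delta'^{ab}$ can swamp the factor $(b/n)^{ab}$, so the bound of \cref{t:Eabn} alone does not visibly tend to $0$.

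For this range I would first try to exploit the better constants. When $a=\Omega(n)$, the remark after \cref{t:Eabn} gives matching constants, and I would redo the count directly using the permanent estimates (as in Proposition~\ref{prop:rectangles}). For small $a$ and large $b$, I would instead bound the probability directly via \cref{t:main}. I would use the number of choices of $a$ rows, $b$ columns and $b$ symbols, namely at most $2^{3n}$, times the number of $a\times b$ Latin rectangles on them, times $(\Delta/n)^{ab}$, with $\alpha=a/n$ tiny. Since only $2b-a\leq 2n$ of the $ab$ factors of $1/n$ are cancelled by the counting, the resulting bound is $\exp(-\Omega(n))$ once $a\geq 3$ and $b\geq cn$. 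The one thing to check here is that the hypothesis $2\alpha+\beta<1$ can still be arranged. I would do this via the remark following \cref{t:main}, choosing the two small sets among rows, columns and symbols appropriately.
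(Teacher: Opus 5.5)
Your bulk-range computation is the argument the paper has in mind; its entire justification is the sentence ``upper bound of \cref{t:Eabn}, Markov's inequality and a union bound''. It is correct for $b\le c_0n$ with $c_0$ a small constant, and your handling of $a=\Omega(n)$ through the matching constants of \cref{prop:rectangles} is also fine. You are also right that the one-line argument does not control $b=\Theta(n)$ when $a$ is small: for $a=3$, $b=cn$ the bound of \cref{t:Eabn} equals $(\Delta'^3c)^{cn}n^3/27$.

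\textbf{The gap is your repair of this range.} Bounding $2^{3n}\mathscr{L}(a,b)(\Delta/n)^{ab}$ via \cref{t:main} is just Case~1 of the proof of \cref{t:Eabn} with a cruder count of placements, so it cannot beat that bound. The claim that ``only $2b-a$ of the $ab$ factors of $1/n$ are cancelled'' is a miscount. The quantity $ab-2b-a$ is the surviving exponent of $b/n$, not of $1/n$, because by \eqref{e:Lab} the factor $\mathscr{L}(a,b)\ge(b/e^2)^{ab}$ pairs every factor $1/n$ with a factor of order $b$. Your bound is therefore at least $(\Delta b/(e^2n))^{ab}$, which is exponentially large as soon as $b/n>e^2/\Delta$. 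The $2^{3n}$ is harmless; what kills the bound is $\Delta^{ab}$ against a per-entry gain of only $b/n$. Heuristically $\mathbb{E}_{a,b}(n)=n^{O(1)}\binom{n}{b}^{2-a}$ for bounded $a$. So this range needs an estimate for $\Pr(\bL\supseteq P)$ accurate up to a per-entry factor far smaller than $\Delta$. Neither \cref{t:main} (which loses $\Delta$ per entry) nor \cref{t:ext} (which loses $\exp(O(n\log^2 n))$, while the available slack is only $e^{O(n)}$ when $a$ is bounded) supplies this.

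\textbf{Your plan for the boundary range also fails, and part of its target is false.} The Remark after \cref{t:main} only lets you choose which two of $\cR_P,\cC_P,\cS_P$ (of sizes $a,b,b$) play the roles of rows and columns. You therefore still need $2a+b$ bounded away from $n$, which fails for instance when $a=3$ and $b\ge n-6$.

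More importantly, complementation gives a bijection between subrectangles. Suppose $L[R,C]$ is an $a\times b$ subrectangle on symbol set $S$ with $b<n$. Then each row of $R$ carries exactly the symbols $[n]\setminus S$ on the columns $[n]\setminus C$. So $L[R,[n]\setminus C]$ is an $a\times(n-b)$ subrectangle (which forces $a+b\le n$), and $C\mapsto[n]\setminus C$ is a bijection.

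Hence $3\times(n-3)$ subrectangles correspond exactly to $3\times 3$ subsquares, and $\mathbb{E}_{3,n-3}(n)=\mathbb{E}_3(n)=\Omega(1)$ by \cref{t:E3n}. A routine second-moment computation with \cref{t:main} gives $\mathbb{E}[X^2]=O(1)$ for the number $X$ of $3\times 3$ subsquares. So such subrectangles occur with probability bounded away from $0$.

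Thus, besides $b=n$, the pair $(3,n-3)$ must also be excluded, and the corollary can only be meant under a restriction such as $a+b\le(1-\varepsilon)n$. Complementation is the right tool for the remaining near-boundary cases with small $a$: it sends them to $a\times b'$ with $b'$ small, where your bulk bound applies. Even within $a+b\le(1-\varepsilon)n$, however, your argument still leaves $a=o(n)$, $b=\Theta(n)$ unproved.
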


\subsection{Isotopic copies of partial Latin squares} \label{ss:patterns}
Moving away from subsquares and subrectangles, we can look more generally for \textit{copies} of any partial Latin square in random Latin squares. Two partial Latin squares $P$ and $P'$ of order $n$ are said to be \textit{isotopic} if there exist permutations of the rows, columns and symbols which result in $P$ being mapped to $P'$ (here we permute the whole row set $[n]$, column set $[n]$ and symbol set $[n]$, not just those with non-empty entries of $P$). We remark that not all subsquares (or subrectangles) of a given size are isotopic but the results of the previous subsections rather study the union of all the isotopism classes of Latin subsquares (or subrectangles) of a given size.  In general, isotopisms preserve many properties of partial Latin squares (for example completability, cf.\ Section \ref{ss:tightness}) and so it is interesting to determine whether there exists an isotopic copy of a given partial Latin square in a random Latin square. This could be used, for example, to determine the existence of certain \textit{Latin trades} in random Latin squares, which are collections of entries that can be removed and replaced by another set of entries to obtain a new Latin square. See~\cite{trade_survey} for a survey on Latin trades.
For convenience, in this subsection, we restrict to looking at constant-sized partial Latin squares, although we expect the same behaviour to hold for partial Latin squares whose number of entries is (mildly) growing with $n$. 

Inspired by the foundational work uncovering thresholds for small subgraphs in random (hyper-)graphs (see for example \cite[Chapter 3]{JLRbook}), we make the following definitions. For a non-empty partial Latin square $P$, 
we define the \textit{density}  to be 
\[m(P):=\min\left\{\frac{|\cR_{P'}|+|\cC_{P'}|+|\cS_{P'}|-|P'|}{|P'|}:\emptyset \neq P'\subseteq P\right\}. \]

Note also  that $-1<m(P)\leq 2$ for all partial Latin squares $P$ as $0<|\cR_{P'}|,|\cC_{P'}|,|\cS_{P'}|\leq |P'|$ for any non-empty partial Latin square $P'$. By considering transversals\footnote{A \textit{transversal} is a partial Latin square with no repeated row, column or symbol amongst its non-empty cells.}, one can get partial Latin squares  achieving the upper bound whilst partial Latin squares $P$ whose non-empty cells form $a\times a$ Latin squares have $m(P)=3/a-1$, showing that $m(P)$ can get arbitrarily close to $-1$. We make the following conjecture which provides a dichotomy for copies of partial Latin squares appearing in random Latin squares, depending on their density. 

\begin{conj} \label{conj:copies}
    For $P$ a partial Latin square and $\bL$ a random Latin square of order $n$, one has that as $n\rightarrow \infty$,
    \begin{subnumcases}
        {\Pr(\bL \textnormal{ contains an isotopic copy of   }P)\rightarrow }
      0 & if  $m(P)<0$; \label{Cneg}\\ 
        1 & if $m(P)>0$. \label{Cpos}
        \end{subnumcases}  
\end{conj}
Moreover when $m(P)=0$, we expect the number of copies of $P$ in $\bL$ to have a Poisson distribution. This is conjectured to be the case, for example, with $3 \times 3$ subsquares~\cite{KSS}. Note that the definition of $m(P)$ takes the minimum over all non-empty $P'\subseteq P$ here which is important. There are partial Latin squares $P$ such that $|\cR_{P}|+|\cC_{P}|+|\cS_{P}|-|P|>0$ and yet $m(P)<0$, such as a subsquare of order 4 with 3 additional entries each with unique row, column and symbol.

 Using Theorem \ref{t:main}, we can easily derive part \eqref{Cneg} of Conjecture \ref{conj:copies} by the first moment method. Indeed, if $m(P)<0$, then there is some $P'\subseteq P$ such that $t:=|\cR_{P'}|+|\cC_{P'}|+|\cS_{P'}|-|P'|<0$. Now the number of isotopic copies of $P'$ is at most $n^{|\cR_{P'}|+|\cC_{P'}|+|\cS_{P'}|}$ by considering the choices of rows, columns and symbols to map to. For each such isotopic copy, the probability that it lies in $\bL$ is at most, say, $(24/n)^{|P'|}$ when $n$ is sufficiently large, by Theorem \ref{t:main}. Hence, by a union bound, the probability that $\bL$ contains an isotopic copy of $P'$ is at most $24^{|P'|}n^{t}$ which tends to $0$ as $n$ tends to infinity due to the fact that $t<0$. Hence a.a.s.\ $\bL$ contains no copy of $P'$ and hence certainly no copy of $P$.  

 In the other direction, we give the following positive result. 
 \begin{prop} \label{prop:partial}
     If $P$ is a partial Latin square with $m(P)>1/2$, then a.a.s.\ a random Latin square $\bL$ of order $n$ will contain an isotopic copy of $P$. 
 \end{prop}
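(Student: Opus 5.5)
We use the second moment method, with \cref{t:main} supplying both the first and second moment estimates. Since $P$ has constant size, fix $r=|\cR_P|$, $c=|\cC_P|$, $s=|\cS_P|$ and $k=|P|$. Rather than all isotopic copies, we count a structured family that avoids degeneracies. Fix disjoint sets $R_1,\dots,R_r$ of rows, $C_1,\dots,C_c$ of columns and $S_1,\dots,S_s$ of symbols, each of size about $\epsilon n$ for a small constant $\epsilon$. A copy is then an injective choice of one element from each block. Alternatively we take all injective maps; the only condition needed for \cref{t:main} is that the number of occupied rows and columns is a constant, and this holds trivially. Let $X$ be the number of such copies $\phi(P)$ contained in $\bL$.

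For the first moment, the number of copies is $\Theta(n^{r+c+s})$. By \cref{t:main}, each copy lies in $\bL$ with probability at least $(\delta/n)^k$. Hence $\mathbb{E}X=\Omega(n^{r+c+s-k})$, and this tends to infinity because $m(P)>0$ applied to $P'=P$ gives $r+c+s-k>0$.

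For the second moment we write $\mathbb{E}X^2=\sum_{\phi,\psi}\Pr(\bL\supseteq\phi(P)\cup\psi(P))$. The union $\phi(P)\cup\psi(P)$ is a partial Latin square whenever it is consistent, and otherwise the term is $0$. It has constantly many rows and columns, so the upper bound of \cref{t:main} gives probability at most $(\Delta/n)^{|\phi(P)\cup\psi(P)|}$. We group the pairs by the overlap. Let $Q=\phi(P)\cap\psi(P)$, viewed via $\phi^{-1}$ as a subset $P'\subseteq P$, and let $t$ be the number of rows, columns and symbols shared by the two images.

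The key issue is that $\psi$ may share rows, columns or symbols with $\phi$ without sharing entries. In that case the number of choices for $\psi$ drops by $n^{t}$, while the probability bound does not improve. So the count of pairs with a given overlap pattern is at most $O(n^{2(r+c+s)-t})$, and the probability is at most $O(n^{-(2k-|Q|)})$. Such a pattern contributes $O\bigl((\mathbb{E}X)^2 n^{|Q|-t}\bigr)$ after comparing against the lower bound $(\mathbb{E}X)^2=\Omega(n^{2(r+c+s-k)})$. Note that the mismatch between the constants $\delta$ and $\Delta$ only costs a factor $C^k=O(1)$.

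This crude approach has a problem, because even the pairs with $Q=\emptyset$ and $t=0$ contribute $\Theta((\Delta/\delta)^{2k})(\mathbb{E}X)^2$, not $(1+o(1))(\mathbb{E}X)^2$. Chebyshev therefore fails, but the Paley--Zygmund inequality still works. If $\mathbb{E}X^2\leq C(\mathbb{E}X)^2$, we only get $\Pr(X>0)\geq 1/C$, which is a positive constant rather than $1-o(1)$.

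To upgrade this to an a.a.s.\ statement, we partition the rows, columns and symbols into $N$ (a large constant) disjoint groups and work with $N$ disjoint "boxes". Positive correlation control across boxes is unavailable, so instead we take $N\to\infty$ slowly; since $P$ is constant-sized, \cref{t:main} still applies. Let $X_i$ be the number of copies inside box $i$. Then $X=\sum X_i$ has $\mathbb{E}X=\sum_i\mathbb{E}X_i$. In $\mathbb{E}X^2$, cross terms with $i\neq j$ involve vertex-disjoint copies, and these are bounded by $(\Delta/\delta)^{2k}\mathbb{E}X_i\mathbb{E}X_j$, which is still not $1+o(1)$. So this route also fails to give concentration.

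We expect this to be the main obstacle, and it is presumably where the hypothesis $m(P)>1/2$ enters. We would therefore try a different strategy: find a subfamily on which the overlap terms dominate. With $m(P)>1/2$, for every $P'$ we have $|\cR_{P'}|+|\cC_{P'}|+|\cS_{P'}|>\tfrac32|P'|$. This suggests a greedy or iterative exposure argument. We build copies of $P$ one at a time and use the \emph{lower} bound of \cref{t:main} conditionally on $\bL$ containing previously fixed entries, i.e.\ on the partial Latin square already forced. Conditioning on $\bL\supseteq Q$ is the uniform measure on completions of $Q$, and the proof of \cref{t:main} presumably applies in that setting too.

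The argument would go as follows. Suppose no copy appears within the first $j$ disjoint boxes. Then some "witness" partial structure of size $O(j)$ is certified absent, and we would bound $\Pr(\text{no copy in box }j+1\mid\text{history})\leq 1-c$, with $c>0$. Iterating over $N\to\infty$ boxes gives failure probability $(1-c)^N\to0$.

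The conditional lower bound per box would come from the Paley--Zygmund computation above, which uses the density condition. Pairs sharing a sub-configuration $P'$ contribute $n^{2(r+c+s)-(|\cR_{P'}|+|\cC_{P'}|+|\cS_{P'}|)}\,n^{-2k+|P'|}$. Relative to $(\mathbb{E}X)^2$, this is $n^{|P'|-(|\cR_{P'}|+|\cC_{P'}|+|\cS_{P'}|)}\leq n^{-m(P)|P'|}$, which tends to $0$. Thus only the constant-factor disjoint terms survive, and these give the constant $c$.

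The genuinely hard step is justifying conditioning on absence events across boxes. Our first attempt would be to make the boxes use the same rows but disjoint columns and symbols, and to condition only on entries revealed in earlier boxes rather than on absence events.
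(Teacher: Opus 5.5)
Your proposal has a genuine gap. It never upgrades $\Pr(X>0)\geq c$ to $\Pr(X>0)\to 1$.

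You correctly diagnose the difficulty. With only the constant-factor bounds of \cref{t:main}, the pairs of copies sharing no row, column or symbol can only be bounded by $(\Delta/\delta)^{2|P|}(\mathbb{E}X)^2$, so Chebyshev fails. But your proposed repair is left unproved: disjoint boxes, iterated exposure, and a bound $\Pr(\text{no copy in box } j+1 \mid \text{history})\leq 1-c$. Nothing in the paper supports it. \cref{t:main} and the lemmas behind it bound probabilities conditioned on $\bL$ \emph{containing} a partial Latin square. They say nothing about conditioning on the absence of copies, which is the event you would need.

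Note also that your overlap computation only uses $m(P)>0$. That alone gives $|\cR_{P'}|+|\cC_{P'}|+|\cS_{P'}|-|P'|\geq 1$ for every nonempty $P'\subseteq P$. So you have not found where $m(P)>1/2$ is really needed.

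The missing idea is that $m(P)>1/2$ forces $P$ to be \emph{1-degenerate}. Suppose some nonempty $P'\subseteq P$ used each of its rows, columns and symbols at least twice. Double counting incidences would give $2(|\cR_{P'}|+|\cC_{P'}|+|\cS_{P'}|)\leq 3|P'|$, i.e.\ $|\cR_{P'}|+|\cC_{P'}|+|\cS_{P'}|-|P'|\leq |P'|/2$, contradicting $m(P)>1/2$. So peeling off entries in reverse orders $P$ so that each entry has a row, column or symbol unused by its predecessors.

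For such an entry, \cref{l:newrcs} together with \cref{rem:symm} gives conditional probability $(1+o(1))/n$, with the sharp constant $1$. This is a plain row, column or symbol cycle switching argument, and it is sharp because $P$ occupies only $O(1)$ rows and columns. Hence every copy lies in $\bL$ with probability $(1+o(1))n^{-|P|}$. The union of two copies with no common row, column or symbol is again 1-degenerate, so every such pair lies in $\bL$ with probability $(1+o(1))n^{-2|P|}$.

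This removes the $(\Delta/\delta)^{2|P|}$ loss entirely. The disjoint pairs contribute $(1+o(1))(\mathbb{E}X)^2$. Your overlap estimates, which need only the upper bound of \cref{t:main}, make the remaining pairs $o((\mathbb{E}X)^2)$. Chebyshev's inequality then finishes the proof directly, with no boxes or conditioning.
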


We remark that Proposition \ref{prop:partial} could also be proven using powerful general purpose tools \cite{kwan2020almost,KSS,LSparity} that are able to transfer events that hold with very high probability in a random triangle removal process to a random Latin square.  In fact, such results can prove the asymptotic almost sure existence of copies of many more partial Latin squares $P$ than those covered by Proposition \ref{prop:partial} (although not all $P$ as in \eqref{Cpos}). Nonetheless, we include the short proof of Proposition \ref{prop:partial} as a simple example of how Theorem \ref{t:main} can be applied in conjunction with the second moment method. 

\subsection{Tightness and completions} \label{ss:tightness}
It is natural to ask what the best possible values of $\delta>0$ and $\Delta>0$ in Theorem \ref{t:main} could be. For `small' Latin squares, it is likely that both constants can be $(1+o(1))$, that is, that $((1-o(1))/n)^{|P|}\leq \Pr(\mathbf{L}\supseteq P)\leq ((1+o(1))/n)^{|P|}$. This would have several implications. In particular, for constant sized $P$,  it would settle part \eqref{MW3} of Conjecture \ref{conj:MW} and Conjecture \ref{conj:copies} (as well as being able to show a Poisson distribution for the number of copies of $P$ with $m(P)=0$). Divoux, Kelly, Kennedy and Sidhu \cite{subsqrandom} in fact conjectured such probability bounds for a very general notion of `small' partial Latin squares, namely those that are locally bounded. We will return to discuss this in more detail in the concluding remarks (Section \ref{s:conc}). 

Nonetheless, one of the strengths of Theorem \ref{t:main} is that it also holds for partial Latin squares that are not small. For such Latin squares, it is no longer the case that 1 is the right constant. Indeed, as we will show in Proposition \ref{prop:rectangles}, it follows from known estimates on permanents  that there is some $c=c(\alpha)\neq 1$ such that for all $0<\alpha <1/2$, the probability of containing a Latin subsquare of order $\alpha n$ lies between $((c(\alpha)-o(1))/n)^{\alpha^2n^2}$ and $((c(\alpha)+o(1))/n)^{\alpha^2n^2}$. On the other hand, in \cref{s:conc} we give an example of a partial Latin square $P'$ with $\alpha n$ rows and $\alpha n$ columns  with $\Pr(\bL \supseteq P')=((c'(\alpha)+o(1))/n)^{\alpha^2 n^2}$ for $c'(\alpha)\neq c(\alpha)$. Therefore one cannot hope for a statement like Theorem \ref{t:main} with a universal constant  $\delta=\Delta$ depending only on $|\cR_P|/n$ and $|\cC_P|/n$.     The  correct constant for a given partial Latin square $P$ will depend on the structure of $P$ and is somewhat mysterious, as we will discuss further in the concluding remarks.

In addition to assessing the quantitative tightness of the constants $\delta,\Delta>0$ in Theorem \ref{t:main}, it is also interesting to reflect on the qualitative nature of the bounds. That is, for which partial Latin squares $P$ can we expect probability bounds of the shape given in Theorem \ref{t:main}?  For this, it is helpful to introduce the notion of \textit{completions}. A \emph{completion} of a partial Latin square is a Latin square that contains it. If a partial Latin square has a completion, then it is \emph{completable}, otherwise it is \emph{non-completable}. Completions of partial Latin squares has been a well-studied topic  with particular interest in whether certain partial Latin squares are completable or not. See~\cite{completion_survey} for a survey on this topic. For example,  a classical result of Ryser \cite{ryser1951combinatorial} states that if $a+b\leq n$, then any $a\times b$ subrectangle is completable to a Latin square of order $n$, whilst for all $a, b$ with $a+b>n$, there are examples of non-completable $a\times b$ subrectangles (which shows in particular the asymptotic tightness of the condition on $a+b$ in Theorem \ref{t:Eabn}).

Note that $\mathbb{P}(\mathbf{L} \supseteq P) = 0$ for a random Latin square $\mathbf{L}$ precisely when $P$ is non-completable.  In contrast, using that there are $((1\pm o(1))n/e^2)^{n^{2}}$ Latin squares of order $n$ \cite{num_squares}, the lower bound of Theorem \ref{t:main} gives  $(cn)^{n^2-|P|}$ completions for some $c=c(P)>0$. This is a stark difference and shows the need for extra conditions on $P$ in order to derive the lower bounds in Theorem \ref{t:main}. For us, this is the condition $2\alpha+\beta<1$ which is asymptotically tight in the following sense: For all $\epsilon > 0$ and sufficiently large $n$, there exists a non-completable partial Latin square $P$ of order $n$ with $|\mathcal{R}_P| \leq \alpha n$ and $|\mathcal{C}_P| \leq \beta n$ where $2\alpha+\beta = 1+\epsilon$. To see this, set $\alpha = 3\epsilon/5$, $\beta = 1-\epsilon/5$, and $\gamma = 1-2\epsilon/5$ so that $2\alpha+\beta = 1+\epsilon$. Let $S_1 = \{1, 2, \ldots, \lfloor\gamma n\rfloor\}$ and let $S_2 = \{\lfloor\gamma n\rfloor+1, \lfloor\gamma n\rfloor+2, \ldots, n\}$. Let $P$ be the partial Latin square of order $n$ defined by: 
    \begin{itemize}
        \item $P[1, i] = i$ for all $i \in S_1$,
        \item $P[i, n] = i$ for all $i \in S_2$, and
        \item all other cells of $P$ are empty.
    \end{itemize}
    Then $|\mathcal{R}_P| = 1+n-\lfloor \gamma n\rfloor \leq 2+2\epsilon n/5 \leq \alpha n$ and $|\mathcal{C}_P| = 1+\lfloor \gamma n\rfloor \leq 1+(1-2\epsilon/5)n \leq \beta n$.
    We claim that $P$ is non-completable. Suppose, for a contradiction, that $L$ is a Latin square of order $n$ that contains $P$ and let $s = L[1, n]$. Observe that $s \notin S_1$, since each symbol in $S_1$ already occurs in row $1$, and also observe that $s \notin S_2$, since each symbol in $S_2$ already occurs in column $n$. But $s$ must be an element of $[n]=S_1 \cup S_2$, which is a contradiction.
	This shows  a striking dichotomy either side of this condition, with the possibility of no completions when $2\alpha+\beta>1$, whilst any partial Latin square has many completions when $2\alpha+\beta<1$.
    It would be interesting to determine further conditions on partial Latin squares under which the lower bound of Theorem \ref{t:main} holds. As a first step, we  suspect that our  condition  can be sharpened to $\alpha+\beta < 1$, where the dichotomy would become even more apparent and there is a richer family of examples demonstrating it. Finally we mention that an upper bound of the shape of Theorem \ref{t:main} is believed to hold for all partial Latin squares without any restrictions, as we discuss in the concluding remarks (Section \ref{s:conc}).

	\subsection{Notation}\label{s:notat}	
	
	Throughout this paper, $n$ will be a positive integer and unless stated otherwise, all asymptotics will be as $n \to \infty$. Unless otherwise stated, all probability distributions will be discrete and uniform with $\Pr(\cdot)$ denoting probability. Random objects will be printed in bold. For $\{a, b\} \subseteq \mathbb{Z}$, we will use $[a, b]$ to denote the integer closed interval $\{z \in \mathbb{Z} : a \leq z \leq b\}$. We use $[b]$ as shorthand for $[1, b]$. Unless specified otherwise, the row index set, column index set, and symbol set of a partial Latin square of order $n$ will be $[n]$.
		
	\subsection{Organisation} \label{s:org}

    The rest of this paper is organised as follows. Our proof of \cref{t:main} uses the switching method. For our proof, we will require several different switching procedures, which we define in \cref{s:switch}. 
    It turns out that in order to effectively use our switching procedures to prove \cref{t:main}, we need to be able to bound the probability that a random Latin square $\bL$ contains  an intercalate, when we condition on the event that $\bL$ contains a partial Latin square satisfying the hypotheses of \cref{t:main}. Such a bound is proved in \cref{s:interc}. We are then able to prove \cref{t:main} in \cref{s:main}. In \cref{s:applications}, we derive the various consequences of \cref{t:main} proving Theorems \ref{t:E3n}, \ref{t:Emn_asymptotics} and \ref{t:Eabn} as well as Corollary \ref{cor:no_rectangles} and Proposition \ref{prop:partial}.  Finally, we discuss the drawbacks of \cref{t:main} and provide some possible directions for future research in \cref{s:conc}.

\subsubsection*{Acknowledgements} We would like to thank Guillem Perarnau and Matthew Kwan 
for interesting conversations around the topic of this paper.

	\section{Switching procedures}\label{s:switch}
	
	The switching method underlies many of the results regarding substructures in random Latin squares, including those in~\cite{subsqrandomrect, cycstrucrandom, subsqrandom, canonicallabel, GM90, KSS, KSSS, KS, manysubsq}. 
    For more details  on the  background of this method, see~\cite{genswitch}. 
	
	\subsection{Cycle switching} \label{s:cycle switching}
	
	Our first switching procedure is known as cycle switching. Let $L$ be a Latin square of order $n$. Let $a$ and $b$ be positive integers satisfying $a \leq b \leq n$. An $a \times b$ \emph{subrectangle} of $L$ is an $a \times b$ submatrix that is itself a Latin rectangle. An $a \times b$ subrectangle is called \emph{proper} if $1 < a \leq b < n$.
	A \emph{row cycle} of length $\ell$ in $L$ is a $2 \times \ell$ subrectangle that does not contain any proper subrectangle. An \emph{intercalate} is a row cycle of length $2$. Equivalently, an intercalate is a submatrix of the form
    \[
    \begin{pmatrix}
        s&s'\\
        s'&s
    \end{pmatrix}
    \]
    in a Latin square.
    
    Let $\{r, r', c\} \subseteq [n]$ with $r \neq r'$. There is a unique row cycle of $L$ that hits rows $r$ and $r'$ and column $c$, which we denote by $\rho_L(r, r', c)$. Let $\cC$ be the set of columns hit by $\rho_L(r, r', c)$. We can define a new Latin square $L'$ by
	\[
	L'[i, j] = \begin{cases}
		L[r', j] & \text{if } i = r \text{ and } j \in \cC,\\
		L[r, j] & \text{if } i = r' \text{ and } j \in \cC,\\
		L[i, j] & \text{otherwise}.
	\end{cases}
	\]
	We say that $L'$ has been obtained from $L$ by switching on $\rho_L(r, r', c)$. See Figure~\ref{f:row_cyc_switch} for an example of row cycle switching.
	
	\begin{figure}[ht]
		\[
		\begin{pmatrix}
			1&2&3&4&5&6&7\\
			\col2&\col7&1&\col5&3&4&\col6\\
			3&1&2&7&6&5&4\\
			4&3&6&1&2&7&5\\
			5&6&7&2&4&1&3\\
			\col7&\col5&4&\col6&1&3&\col2\\
			6&4&5&3&7&2&1\\
		\end{pmatrix}
		\quad\longrightarrow\quad
		\begin{pmatrix}
			1&2&3&4&5&6&7\\
			\col7&\col5&1&\col6&3&4&\col2\\
			3&1&2&7&6&5&4\\
			4&3&6&1&2&7&5\\
			5&6&7&2&4&1&3\\
			\col2&\col7&4&\col5&1&3&\col6\\
			6&4&5&3&7&2&1\\
		\end{pmatrix}
		\]
		\caption{\label{f:row_cyc_switch}The highlighted entries in the Latin square $L$ on the left form the row cycle $\rho = \rho_L(2, 6, 1)$, which has length $4$. The Latin square on the right is obtained from $L$ by switching on $\rho$.}
	\end{figure}

	By definition, one  immediately obtains that row cycle switches are \textit{reversible}.
	
	\begin{fact}[Reversibility of row cycle switches] \label{f:row_cyc_reverse}
		Let $L$ be a Latin square of order $n$ and let $\{r, r', c\} \subseteq [n]$ with $r \neq r'$. 
		If $L$ is obtained from some Latin square $L'$ by switching on $\rho_{L'}(r, r', c)$, then $L'$ is uniquely determined: it is the Latin square obtained from $L$ by switching on $\rho_L(r, r', c)$.
	\end{fact}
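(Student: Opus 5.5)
The proof is a direct unwinding of the definition of switching on a row cycle, combined with the observation that the row cycle through $(r,r',c)$ occupies the same set of columns before and after the switch.

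Let $L'$ be any Latin square such that switching on $\rho_{L'}(r,r',c)$ produces $L$. Let $\cC'$ be the set of columns hit by $\rho_{L'}(r,r',c)$. By the definition of switching:
\begin{itemize}
\item $L$ agrees with $L'$ outside rows $r,r'$;
\item $L$ agrees with $L'$ in rows $r,r'$ for every column not in $\cC'$;
\item for $j\in\cC'$, the symbols $L[r,j]=L'[r',j]$ and $L[r',j]=L'[r,j]$ are exchanged.
\end{itemize}

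\textbf{Step 1: the switched rows still form a row cycle on $\cC'$.} The columns of $\cC'$ in rows $r,r'$ of $L'$ form a $2\times|\cC'|$ Latin subrectangle: each of the two rows uses the same set of symbols on $\cC'$. Swapping the two rows preserves this, so in $L$ the entries in rows $r,r'$ and columns $\cC'$ again form a $2\times|\cC'|$ subrectangle containing column $c$.

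Moreover, this subrectangle contains no proper subrectangle. A $2\times m$ sub-subrectangle of it in $L$ would, after swapping its two rows back, give a $2\times m$ subrectangle inside $\rho_{L'}(r,r',c)$ in $L'$. That contradicts the minimality of $\rho_{L'}(r,r',c)$.

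\textbf{Step 2: identify the row cycle in $L$.} A row cycle through rows $r,r'$ and column $c$ is unique, as stated in the paper. It is the orbit of $c$ under the permutation of columns $j\mapsto$ the column in row $r$ containing the symbol $L[r',j]$. Hence $\rho_L(r,r',c)$ is exactly the subrectangle on columns $\cC'$ found in Step 1. In particular, its column set $\cC$ equals $\cC'$.

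\textbf{Step 3: the switch is an involution.} Switching on $\rho_L(r,r',c)$ exchanges rows $r,r'$ on the columns $\cC=\cC'$ again and leaves everything else unchanged. This recovers $L'$ entry by entry. Therefore $L'$ is determined by $L$, $r$, $r'$ and $c$ alone: it is the Latin square obtained from $L$ by switching on $\rho_L(r,r',c)$.

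The only point needing care is Step 2, namely that the column set does not change under the switch. The rest is immediate from the definitions.
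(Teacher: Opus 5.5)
Your proof is correct and follows the route the paper intends. The paper only asserts that the fact is immediate from the definition, and your argument spells that out: on $\cC'$ the two switched rows still form the unique minimal subrectangle through column $c$, so $\rho_L(r,r',c)$ has column set $\cC'$, and switching on it again restores $L'$. A slightly quicker way to get Step 2 is to note that, on $\cC'$, the column permutation $j\mapsto$ (the column of row $r$ containing $L[r',j]$) for $L$ is the inverse of the corresponding permutation for $L'$, so the orbit of $c$ is unchanged.
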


	We will also need the analogous notion of column cycle switching. Let $L^*$ denote the transpose of $L$. A \emph{column cycle} of length $\ell$ in $L$ is the image of a row cycle in $L^*$ under transposition. Equivalently, a column cycle in $L$ is an $\ell \times 2$ submatrix that contains exactly $\ell$ symbols and that does not contain, for any $\ell' < \ell$, a $2 \times \ell'$ submatrix with exactly $\ell'$ symbols. For any $\{c, c', r\} \subseteq [n]$ with $c \neq c'$, there is a unique column cycle that hits columns $c$ and $c'$ and row $r$, which we denote by $\gamma_L(c, c', r)$. We can switch on column cycles in an analogous way to how we switched on row cycles. Again, we immediately get that this procedure is reversible.
	
	\begin{fact}[Reversibility of column cycle switches] \label{f:col_cyc_reverse}
		Let $L$ be a Latin square of order $n$ and let $\{c, c', r\} \subseteq [n]$ with $c \neq c'$. 
		If $L$ is obtained from some Latin square $L'$ by switching on $\gamma_{L'}(c, c', r)$, then $L'$ is uniquely determined: it is the Latin square obtained from $L$ by switching on $\gamma_L(c, c', r)$.
	\end{fact}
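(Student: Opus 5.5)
The statement to prove is Fact~\ref{f:col_cyc_reverse}. The plan is to reduce it to Fact~\ref{f:row_cyc_reverse} by transposition, and to check the underlying row-cycle claim directly for completeness.

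\textbf{Transposition dictionary.} Transposition $L\mapsto L^*$ is a bijection on Latin squares of order $n$, with $(L^*)^*=L$. By the definition of column cycles, $\gamma_L(c,c',r)$ is the transpose of the row cycle $\rho_{L^*}(c,c',r)$. Moreover, switching on $\gamma_L(c,c',r)$ in $L$ and then transposing gives the same square as switching on $\rho_{L^*}(c,c',r)$ in $L^*$. This holds because both operations swap, within the same set of cells (transposed), the entries of the two lines.

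\textbf{Reduction.} Suppose $L$ is obtained from $L'$ by switching on $\gamma_{L'}(c,c',r)$. Transposing, $L^*$ is obtained from $L'^*$ by switching on $\rho_{L'^*}(c,c',r)$. Fact~\ref{f:row_cyc_reverse} then says that $L'^*$ is uniquely determined: it is the result of switching on $\rho_{L^*}(c,c',r)$ in $L^*$. Transposing back, $L'$ is the result of switching on $\gamma_L(c,c',r)$ in $L$. Uniqueness is preserved because transposition is injective.

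\textbf{The row-cycle fact.} The only substantive point is that the row cycle through rows $r,r'$ and column $c$ in the switched square occupies the same column set $\cC$ as before. Since $\rho_{L'}(r,r',c)$ is a $2\times\ell$ Latin subrectangle on columns $\cC$ that contains no proper subrectangle, the permutation $\sigma$ on $\cC$ defined by ``the symbol $L'[r,j]$ appears in row $r'$ at column $\sigma(j)$'' is a single $\ell$-cycle.

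After the swap, the two rows restricted to $\cC$ are exchanged. The corresponding permutation is therefore $\sigma^{-1}$, which is again a single cycle on $\cC$, and $c\in\cC$. Hence $\rho_{L}(r,r',c)$ has column set $\cC$, and swapping on it again restores $L'$. This gives both existence and uniqueness of the inverse switch.

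The main (mild) obstacle is this observation that the cycle structure is invariant under the swap; everything else is bookkeeping.
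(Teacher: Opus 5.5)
Your proof is correct and takes the same approach as the paper. The paper gives no separate argument: it calls the fact immediate, because column cycles and their switches are by definition transposes of row cycles and row-cycle switches, and it says Fact~\ref{f:row_cyc_reverse} itself follows directly from the definition. Your transposition reduction is exactly this, and your check that the row permutation on $\cC$ becomes $\sigma^{-1}$, still a single cycle containing $c$, supplies the detail the paper leaves implicit.
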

	
	There is a third type of cycle switching which is \textit{symbol cycle switching}. Define $\tilde L$ to be the image of $L$ under the map defined by swapping the roles of symbols and rows in $L$. That is, $\tilde L:=\{(L[r,c],c,r):(r,c,L[r,c])\in L\}$. A symbol cycle of length $\ell$ in $L$ is the image of a row cycle in $\tilde L$ under this map.    
    We will not work with symbol cycle switches directly and so we will not elaborate on their properties, all of which can also be derived from considering the analogous results for row cycle switches in $\tilde{L}$.  
	We collectively refer to row cycle switching, column cycle switching  and symbol cycle switching as \textit{cycle switching}. Despite its simplicity, cycle switching has been used to prove many important results regarding substructures in random Latin rectangles. For instance, cycle switching was the main tool used to prove the results in~\cite{subsqrandomrect}. See~\cite{cycswitch} for a study on cycle switching in Latin squares.

	Let $\{r, r'\} \subseteq [n]$ with $r \neq r'$. The permutation mapping row $r$ of $L$ to row $r'$, denoted by $\pi^\rho_{r, r'}$, is defined by $\pi^\rho_{r, r'}(L[r, c]) = L[r', c]$ for all $c \in [n]$. Such a permutation is called a \emph{row permutation} of $L$. When we write a row permutation in disjoint cycle notation, each cycle corresponds to a row cycle of $L$ involving rows $r$ and $r'$. Similarly, if  $\{c, c'\} \subseteq [n]$ with $c \neq c'$, then the \emph{column permutation} mapping column $c$ to $c'$, denoted by $\pi^\gamma_{c, c'}$, is defined by $\pi^\gamma_{c, c'}(L[r, c]) = L[r, c']$ for all $r \in [n]$. A cycle in $\pi^\gamma_{c, c'}$ corresponds to a column cycle of $L$ hitting columns $c$ and $c'$.

	The following lemma, proven by Cavenagh, Greenhill and Wanless \cite{cycstrucrandom},  shows how performing a column cycle switch affects the length of a row cycle  in the Latin square.
	
	\begin{lem}[The effect of a column cycle switch on a row cycle]\label{l:col_cyc_switch}
		Let $L$ be a Latin square of order $n$. Let $\{r, r', c\} \subseteq [n]$ with $r \neq r'$. Write the cycle of $\pi^\rho_{r, r'}$ containing symbol $L[r, c]$ as $(z_1 = L[r, c], z_2, \ldots, z_m)$ and for $i \in [m]$, let $c_i$ be the column such that $L[r, c_i] = z_i$. Now let $c'\neq c''\in [n]$ such that  $\gamma_L(c', c'', r)$ does not hit row $r'$. Let $L'$ be the Latin square obtained from $L$ by switching on $\gamma_L(c', c'', r)$. Then 
		\begin{enumerate}
			\item \label{i:row both intersect} If $c'=c_i$ and $c''=c_j$ for some $\{i, j\} \subseteq [m]$ with $1<i < j$, then the set of columns hit by $\rho_{L'}(r, r', c)$ is $\{c_\ell : 1 \leq \ell \leq i-1 \text{ or } j \leq \ell \leq m\}$;
			\item \label{i:row one intersect} If $c'=c_i$ for some $i\in [m]$ and $c''\notin\{c_1,\ldots,c_m\}$, then the set of columns hit by $\rho_{L'}(r, r', c)$ is the union of the set of columns hit by $\rho_{L}(r, r', c)$ and the set of columns hit by $\rho_L(r,r',c'')$; 
			\item \label{i:row no intersect}  If $c',c''\notin \{c_1,\ldots,c_m\}$ then $\rho_{L'}(r,r',c)=\rho_L(r,r',c)$ is unchanged. 
		\end{enumerate}
	\end{lem}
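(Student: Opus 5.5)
The plan is to reduce everything to the row permutation. Let $\pi=\pi^\rho_{r,r'}$ in $L$ and $\pi'=\pi^\rho_{r,r'}$ in $L'$. The row cycle $\rho_L(r,r',c)$ is determined by rows $r$ and $r'$ of $L$ alone. Its column set is $\{x : L[r,x]\in Z\}$, where $Z$ is the cycle of $\pi$ containing $L[r,c]$; the same description holds in $L'$ with $\pi'$. So it suffices to know how rows $r$ and $r'$ change under the switch and how this changes the cycle structure of the row permutation.

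First, the effect of the switch on rows $r$ and $r'$. The column cycle $\gamma_L(c',c'',r)$ hits row $r$ but not row $r'$. Switching on it swaps the contents of columns $c'$ and $c''$ in each row it hits and leaves all other rows unchanged. Hence row $r'$ is unchanged, and row $r$ changes only by exchanging $z':=L[r,c']$ and $z'':=L[r,c'']$. Reading off $\pi'(L'[r,x])=L'[r',x]$ gives
\[
\pi'(z')=L[r',c'']=\pi(z''),\qquad \pi'(z'')=L[r',c']=\pi(z'),
\]
while $\pi'$ agrees with $\pi$ on every other symbol. So $\pi'=\pi\circ\tau$ with $\tau=(z'\,z'')$. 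We then use the standard fact that composing a permutation with a transposition splits a cycle if both points lie in it, merges two cycles if the points lie in different cycles, and leaves every cycle containing neither point untouched.

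Next, the three cases, with $\pi$ containing the cycle $(z_1,\dots,z_m)$.

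In case (1), $z'=z_i$ and $z''=z_j$ with $1<i<j$. Tracing $\pi\circ\tau$ from $z_1$ gives $z_1\to\cdots\to z_i\to z_{j+1}\to\cdots\to z_m\to z_1$. This is because $\pi\tau(z_i)=\pi(z_j)=z_{j+1}$, and no $z_k$ with $k<i$ or $k>j$ is moved by $\tau$. Since $i>1$ and $j>1$, column $c$ is neither $c'$ nor $c''$, so $L'[r,c]=z_1$ still. Translating back to columns, every symbol keeps its column except $z_i$, which in $L'$ sits in column $c_j$. This gives the column set $\{c_1,\dots,c_{i-1},c_j,\dots,c_m\}$, as claimed.

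In case (2), $z_i$ and $z''=L[r,c'']$ lie in different cycles of $\pi$, and the second of these is the cycle of $\rho_L(r,r',c'')$. Then $\pi'$ merges the two into one cycle whose symbol set is the union. The columns of $z_i$ and $z''$ in $L'$ are $c''$ and $c_i$, both already in the union of the two column sets, so the column set is the union. Here $c=c_i$ is allowed: the merged cycle still contains $L'[r,c]$.

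In case (3), neither swapped symbol lies in the cycle. Row $r$ is unchanged on $c_1,\dots,c_m$ and row $r'$ is unchanged everywhere, so the cycle, and hence $\rho$, is unchanged.

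The only step needing real care is the bookkeeping in case (1). There one must check that $\pi'=\pi\tau$ rather than $\tau\pi$, and that the relabelling of columns, in which $z_i$ moves from column $c_i$ to column $c_j$, produces exactly the stated set, including the boundary column $c_j$. I would verify this on a small example, such as Figure~\ref{f:row_cyc_switch}, before writing it out.
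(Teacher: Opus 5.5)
Your proof is correct and complete. The switch fixes row $r'$ and only exchanges $z'=L[r,c']$ and $z''=L[r,c'']$ in row $r$, so the new row permutation is $\pi\circ(z'\,z'')$. The split/merge/unchanged behaviour of multiplying by a transposition then gives the three conclusions. Your column bookkeeping is also right: in (1), $z_i$ moves to column $c_j$; in (2), when $i=1$, $L'[r,c]=z''$ still lies in the merged cycle.

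The one small point to make explicit is that in (1) the index $j+1$ is read cyclically. So when $j=m$ the new cycle is $z_1\to\cdots\to z_i\to z_1$, which still matches the stated column set.

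The paper does not prove this lemma itself; it quotes it from Cavenagh, Greenhill and Wanless. Your transposition argument therefore stands as a self-contained proof rather than an alternative to one given in the text.
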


	\begin{figure}[h]
		\centering
		\includegraphics[width=0.84\linewidth]{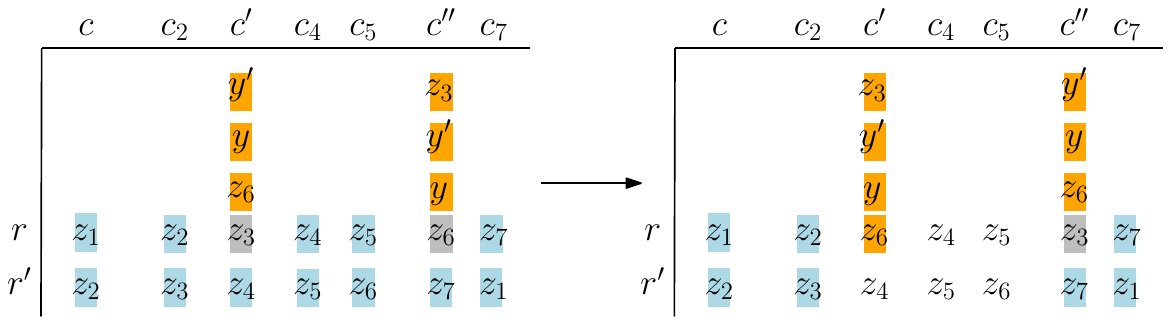}
		\caption{An example of a row cycle $\rho_L(r,r',c)$ (in blue/grey) before and after a switch along column cycle $\gamma_L(c',c'',r)$  (in orange/grey) as in Lemma \ref{l:col_cyc_switch} \eqref{i:row both intersect} with $m=7$,  $c'=c_3$ and $c''=c_6$.   }
		\label{fig:row cyc after col switch}
	\end{figure}
	
	An example of the first situation \eqref{i:row both intersect} is shown in \cref{fig:row cyc after col switch}. We remark that Lemma \ref{l:col_cyc_switch} deals entirely with the case that $\gamma_L(c',c'',r)$ does not intersect row $r'$. In the case that $\gamma_L(c',c'',r)$ does hit row $r'$, performing a switch on $\gamma_L(c',c'',r)$ will not affect the length of $\rho_L(r,r',c)$ or the row permutation $\pi^\rho_{r, r'}$, although $\rho_{L'}(r,r',c)$ will contain different entries to $\rho_L(r,r',c)$ if $\{c',c''\}\cap \{c_1,\ldots,c_m\}\neq \emptyset$.  
	
	Analogously, switching along a row cycle can affect column cycles in a Latin  square. We collect the consequences in the following lemma.

	\begin{lem}[The effect of a row cycle switch on a column cycle]\label{l:row_cyc_switch}
		Let $L$ be a Latin square of order $n$. Let $\{c, c', r\} \subseteq [n]$ with $c \neq c'$. Write the cycle of $\pi^\gamma_{c, c'}$ containing symbol $L[r, c]$ as $(y_1 = L[r, c], y_2, \ldots, y_m)$ and for $i \in [m]$, let $r_i$ be the row such that $L[r_i, c] = y_i$. Now let $r'\neq r''\in [n]$ such that  $\rho_L(r', r'', c)$ does not hit column $c'$. Let $L'$ be the Latin square obtained from $L$ by switching on $\rho_L(r', r'', c)$. Then 
		\begin{enumerate}
			\item \label{i:col both intersect} If $r'=r_i$ and $r''=r_j$ for some $\{i, j\} \subseteq [m]$ with $1<i < j$, then the set of rows hit by $\gamma_{L'}(c, c', r)$ is $\{r_\ell : 1 \leq \ell \leq i-1 \text{ or } j \leq \ell \leq m\}$; 
			\item \label{i:col one intersect} If $r'=r_i$ for some $i\in [m]$ and $r''\notin\{r_1,\ldots,r_m\}$, then the set of rows hit by $\gamma_{L'}(c, c', r)$ is the union of the set of rows hit by $\gamma_{L}(c, c', r)$ and the set of rows hit by $\gamma_L(c,c',r'')$;
			\item \label{i:col no intersect}  If $r',r''\notin \{r_1,\ldots,r_m\}$ then $\gamma_{L'}(c,c',r)=\gamma_L(c,c',r)$ is unchanged. 
		\end{enumerate}
	\end{lem}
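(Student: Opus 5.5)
The natural route is to deduce this from Lemma~\ref{l:col_cyc_switch} by transposition rather than redo the cycle-tracing argument. Let $L^*$ be the transpose of $L$, so $L^*[i,j]=L[j,i]$. Under transposition, column cycles of $L$ become row cycles of $L^*$, and vice versa. In particular, $\gamma_L(c,c',r)$ is the transpose of $\rho_{L^*}(c,c',r)$ and $\rho_L(r',r'',c)$ is the transpose of $\gamma_{L^*}(r',r'',c)$. Likewise, the column permutation $\pi^\gamma_{c,c'}$ of $L$ coincides with the row permutation $\pi^\rho_{c,c'}$ of $L^*$, because $L[x,c]=L^*[c,x]$. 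Hence the cycle $(y_1=L[r,c],\dots,y_m)$ of $\pi^\gamma_{c,c'}$ is exactly the cycle of $\pi^\rho_{c,c'}$ in $L^*$ containing $L^*[c,r]$. The row $r_i$ with $L[r_i,c]=y_i$ is the column with $L^*[c,r_i]=y_i$, so it plays the role of $c_i$ in Lemma~\ref{l:col_cyc_switch}.

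Next we check that the hypothesis and the switch transfer. The assumption that $\rho_L(r',r'',c)$ does not hit column $c'$ becomes the statement that $\gamma_{L^*}(r',r'',c)$ does not hit row $c'$, which is precisely the hypothesis of Lemma~\ref{l:col_cyc_switch} applied to $L^*$. That application uses the roles $r\mapsto c$, $r'\mapsto c'$, $c\mapsto r$ and $(c',c'')\mapsto(r',r'')$. Switching on a cycle commutes with transposition, since both operations simply exchange entries within the two lines of the cycle. Therefore $(L')^*$ is the Latin square obtained from $L^*$ by switching on $\gamma_{L^*}(r',r'',c)$.

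Each case of Lemma~\ref{l:col_cyc_switch} for $L^*$ now translates directly. The columns hit by $\rho_{(L')^*}(c,c',r)$ are the rows hit by $\gamma_{L'}(c,c',r)$, which gives items \eqref{i:col both intersect}, \eqref{i:col one intersect} and \eqref{i:col no intersect}. In the middle case we also use that the columns hit by $\rho_{L^*}(c,c',r'')$ are the rows hit by $\gamma_L(c,c',r'')$.

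The only real obstacle is bookkeeping: one must ensure that the ordering of the cycle, namely $y_1=L[r,c]$ with each successive $y_{i+1}=\pi(y_i)$, matches the ordering $z_1=L^*[c,r]$ used in Lemma~\ref{l:col_cyc_switch}. The index conditions $1<i<j$ then transfer verbatim. This matching holds because the permutations are literally the same. Should one prefer a direct proof, it would repeat the Cavenagh--Greenhill--Wanless argument: follow the cycle of $\pi^\gamma_{c,c'}$ and observe that swapping symbols between rows $r_i$ and $r_j$ in column $c$ (and on other columns of the row cycle, which avoid $c'$) reconnects the cycle, either splitting it (case 1) or merging it with another cycle (case 2).
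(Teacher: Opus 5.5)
Your transposition argument is correct and takes the same approach as the paper. The paper gives no separate proof: it presents this lemma as the row/column analogue of Lemma~\ref{l:col_cyc_switch}, and your dictionary ($\pi^\gamma_{c,c'}$ in $L$ equals $\pi^\rho_{c,c'}$ in $L^*$, $r_i$ playing the role of $c_i$, the hypothesis on column $c'$ transferring, and switching commuting with transposition) is exactly what makes that ``analogously'' rigorous.
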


	\begin{figure}[h]
		\centering
		\includegraphics[width=0.64\linewidth]{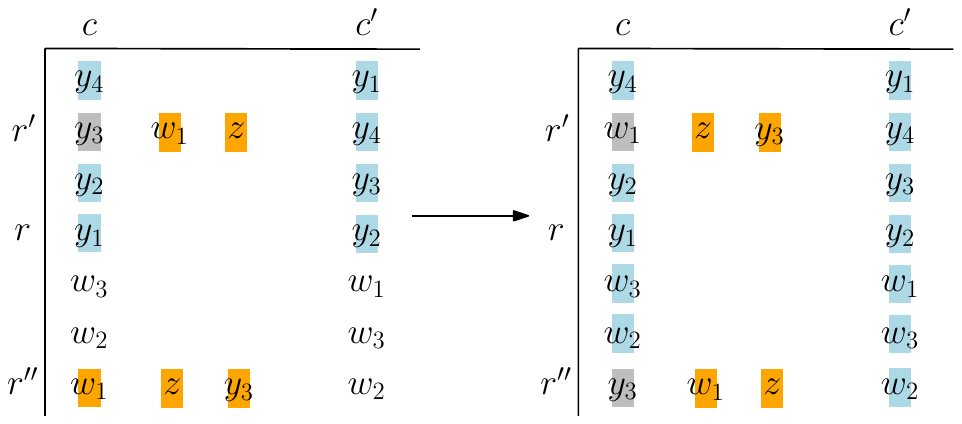}
		\caption{An example of a column cycle $\gamma_L(c,c',r)$ (in blue/grey) before and after a  switch along row cycle $\rho_L(r',r'',c)$  (in orange/grey) as in Lemma \ref{l:row_cyc_switch} \eqref{i:col one intersect} with $m=4$ and $r'=r_3$.   }
		\label{fig:col cyc after row switch}
	\end{figure}

	\subsection{Partial cycles} \label{s:partial}
	Let $\{r, r'\} \subseteq [n]$ with $r \neq r'$ and let $\pi^\rho_{r, r'}$ be the row permutation  of $L$ defined by these rows. Furthermore, let $c$ be a column of $L$ and let $c' \neq c$ be a column that is hit by the row cycle $\rho_L(r, r', c)$. Let $s = L[r, c]$ and let $s' = L[r, c']$. Write the cycle of $\pi^\rho_{r, r'}$ containing $s$ as $(z_1 = s, z_2, \ldots, z_m = s', z_{m+1}, \ldots, z_\ell)$. For $i \in [m]$, let $c_i$ be the column such that $L[r, c_i] = z_i$. Note that $c = c_1$ and $c' = c_m$. Define $\rho_L(r, r', c\rightarrow c')$ to be the set of entries 
	\[
	\{(r, c_i, z_i), (r', c_i, z_{i+1}) : i \in [m-1]\}.
	\]
	We say that $\rho_L(r, r', c\rightarrow c')$ is the \emph{partial row cycle} from $c$ to $c'$ along $\rho_L(r, r', c)$. We define the \emph{distance} from $c$ to $c'$ along $\rho_L(r, r', c)$, denoted by $\dist_L^\rho(r, r', c\rightarrow c')$, to be the number of columns hit by $\rho_L(r, r', c\rightarrow c')$ (which is $m-1$ in the notation used above). We extend this distance function to all $(r, r', c, c') \in [n]^4$ with $r \neq r'$ and $c \neq c'$ by setting $\dist_L^\rho(r, r', c\rightarrow c') = \infty$ if $\rho_L(r, r', c)$ does not hit column $c'$.
	
	\begin{figure}[h]
		\centering
		\includegraphics[width=0.54\linewidth]{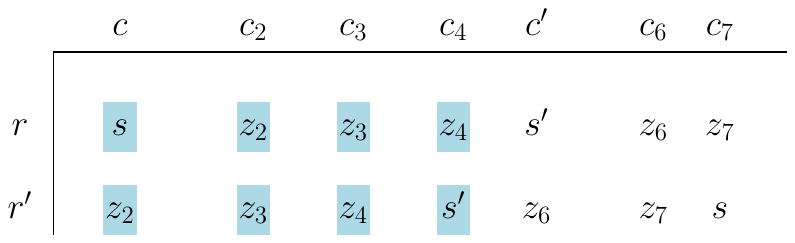}
		\caption{A partial row cycle $\rho_L(r,r',c\rightarrow c')$ with $\dist_L^\rho(r,r',c\rightarrow c')=4$.   }
		\label{fig:partial row}
	\end{figure}
	
	\begin{figure}[h]
		\centering
		\includegraphics[width=0.54\linewidth]{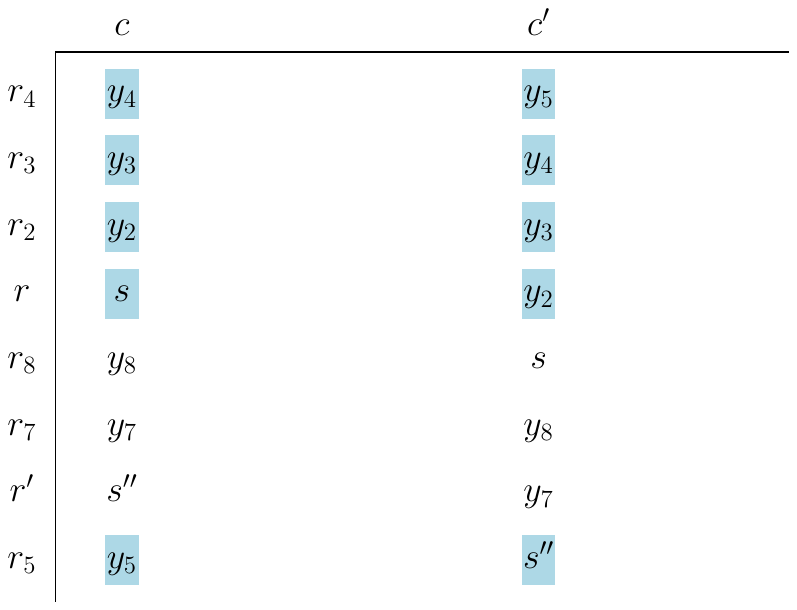}
		\caption{A partial column cycle $\gamma_L(c,c',r\rightarrow r')$ with $\dist_L^\gamma(c,c',r\rightarrow r')=5$.   }
		\label{fig:partial col}
	\end{figure}
	
	We will also need the analogous notion of a partial column cycle. Let $\{c, c'\} \subseteq [n]$ with $c \neq c'$ and let $\pi^\gamma_{c, c'}$ be the column permutation of $L$ defined by these columns.
    Let $r \in [n]$ and let $r' \neq r$ be a row that is hit by the column cycle $\gamma_L(c, c', r)$.
	Let $s = L[r, c]$ and let $s'' = L[r', c]$. Write the cycle of $\pi^\gamma_{c, c'}$ containing $s$ as $(y_1 = s, y_2, \ldots, y_m = s'', y_{m+1}, \ldots, y_\ell)$. For $i \in [m]$, let $r_i$ be the row such that $L[r_i, c] = y_i$. Note that $r = r_1$ and $r' = r_m$. Define $\gamma_L(c, c', r\rightarrow r')$ to be the set of entries
	\[
	\{(r_i, c, y_i), (r_i, c', y_{i+1}) : i \in [m-1]\}.
	\]
	We say that $\gamma_L(c, c', r\rightarrow r')$ is the \emph{partial column cycle} from $r$ to $r'$ along $\gamma_L(c, c', r)$. We define the \emph{distance} from $r$ to $r'$ along $\gamma_L(c, c', r)$, denoted by $\dist_L^\gamma(c, c', r\rightarrow r')$, to be the number $m-1$ of rows hit by $\gamma_L(c, c', r\rightarrow r')$. We extend this distance function to all $(c, c', r, r') \in [n]^4$ with $c \neq c'$ and $r \neq r'$ by setting $\dist_L^\gamma(c, c', r\rightarrow r') = \infty$ if $r'$ is not hit by $\gamma_L(c, c', r)$.

	\subsection{Cross-switching}	 \label{s:cross switch}
	Having defined  partial cycles, we are now ready to define our second switching procedure, known as \textit{cross-switching}. Let $\{c, c', r\} \subseteq [n]$ with $c \neq c'$. Let $r' \neq r$ be a row such that $1 < \dist_L^\gamma(c, c', r\rightarrow r') \neq \infty$ and $\dist_L^\rho(r, r', c\rightarrow c') \neq \infty$. Let $c''$ be the column such that $L[r, c''] = L[r', c]$ and label this symbol $s''$. Define $\zeta_L(c, c', r, r')$ to be the set of entries
	\[
	(\gamma_L(c, c', r\rightarrow r') \setminus \{(r, c, L[r, c])\}) \cup \{(r', c, s'')\} \cup \rho_L(r, r', c''\rightarrow c').
	\]
	
	An example of the set $\zeta_L(c,c',r,r')$ is given in \cref{fig:cross_switch} and we remark that the Latin square on the left is  the same Latin square as in \cref{fig:partial row} with $s''=z_2$, $t=z_6$ and $c''=c_2$ and also the same Latin square as in \cref{fig:partial col} with $s'=y_2$ and $t=y_7$. 
	\begin{figure}[h]
		\centering
		\includegraphics[width=0.84\linewidth]{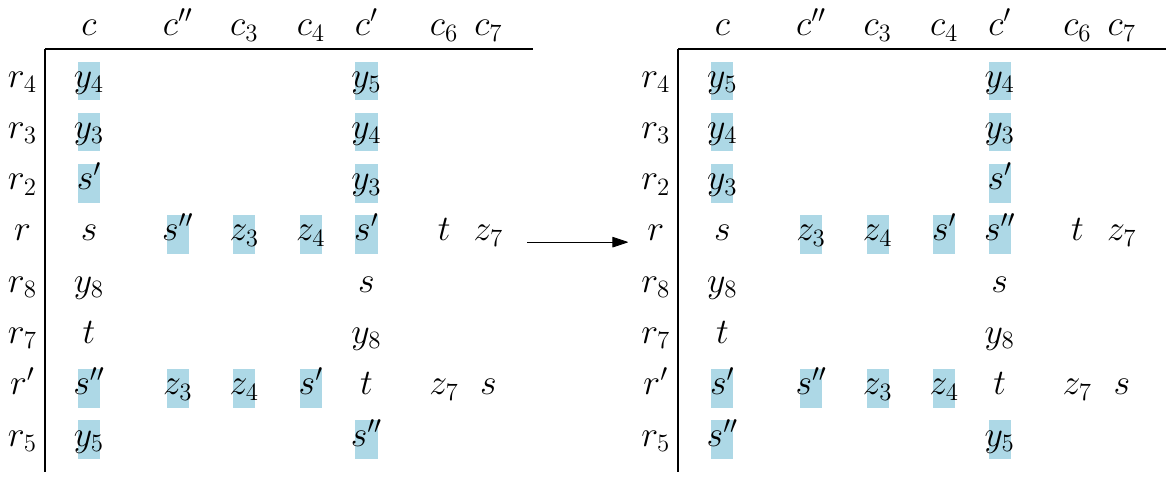}
		\caption{The highlighted entries in the Latin square $L$ on the left form the substructure $\zeta = \zeta_L(c, c', r, r')$. The Latin square on the right is obtained from $L$ by switching on $\zeta$.}
		\label{fig:cross_switch}
	\end{figure}
	
	Let $\cR$ be the set of rows other than $r$ hit by $\gamma_L(c, c', r\rightarrow r')$ and let $\cC$ be the set of columns hit by $\rho_L(r, r', c''\rightarrow c')$. We can define a new Latin square $L'$ by replacing the entries in $\zeta_L(c, c', r, r')$ by the entries in
	\[
	\{(x, c, L[x, c']), (x, c', L[x, c]) : x \in \cR\} \cup   \{(r, y, L[r', y]), (r', y, L[r, y]) : y \in \cC\} \cup  \{(r, c', s''), (r', c, L[r, c'])\}.
	\]
	In other words, for each row in $\cR$, we swap the two symbols involved in $\gamma_L(c, c', r\rightarrow r')$ in that row, for each column in $\cC$, we swap the two symbols involved in $\rho_L(r, r', c'' \rightarrow c')$ in that column, and we swap the symbols in cells $(r, c')$ and $(r', c)$. We say that $L'$ has been obtained from $L$ by switching on $\zeta_L(c, c', r, r')$. See \cref{fig:cross_switch} for an example of cross-switching. Cross-switching has previously been used, for example, by Cavenagh, Greenhill and Wanless~\cite{cycstrucrandom}.
	
	The following lemma, proven in~\cite{cycstrucrandom}, determines the reversibility of  cross-switching.
	
	\begin{lem}[Reversibility for cross-switching]\label{l:cross_reverse}
		Let $L$ be a Latin square of order $n$ and let $\{r, r', c, c'\} \subseteq [n]$ with $r \neq r'$ and $c \neq c'$. 
		If $L$ is obtained from some Latin square $L'$ by switching on $\zeta_{L'}(c, c', r, r')$, then $\zeta_L(c, c', r, r')$ is well-defined and $L'$ is uniquely determined: it is the Latin square obtained from $L$ by switching on $\zeta_L(c, c', r, r')$.
	\end{lem}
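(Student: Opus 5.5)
The plan is to verify the statement directly by tracking, cell by cell, what the switch on $\zeta_{L'}(c,c',r,r')$ does to $L'$. The goal is to show three things: that $\zeta_L(c,c',r,r')$ is defined; that it occupies \emph{exactly the same set of cells} as $\zeta_{L'}(c,c',r,r')$, with the same pairing of cells (rows of $\cR$ in columns $c,c'$; columns of $\cC$ in rows $r,r'$; and the pair $(r,c'),(r',c)$); and that switching on it applies the same transpositions again. Since each transposition of two cells is an involution, this returns $L'$. In particular $L'$ is determined by $L$, which gives uniqueness.

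\textbf{Setting up notation in $L'$.} Write the cycle of $\pi^\gamma_{c,c'}$ in $L'$ as $(y_1=L'[r,c],y_2,\dots,y_m=s'',\dots)$ with $L'[r_i,c]=y_i$ and $L'[r_i,c']=y_{i+1}$ for $i<m$. Here $r_1=r$, $r_m=r'$, and $m\ge 3$ by the hypothesis $\dist^\gamma>1$. Similarly write the partial row cycle from $c''$ to $c'$. Its columns are $c''=c_1,\dots,c_m'$ with $c_{m'}=c'$, and it carries $L'[r,c_j]=z_j$ and $L'[r',c_j]=z_{j+1}$, where $z_1=s''$ and $z_{m'}=L'[r,c']=y_2$.

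I will first check that $c\notin\cC=\{c_1,\dots,c_{m'-1}\}$ and $r,r'\notin\cR=\{r_2,\dots,r_{m-1}\}$, so that the three groups of swapped cells are disjoint. The row claim is clear from the cycle labelling. For the column claim, note $L'[r',c]=s''=z_1$. Hence in the row cycle $c$ is the column immediately preceding $c''$, so it comes after $c'$ and does not lie in $\cC$.

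\textbf{Computing $L$ and recovering $\zeta_L$.} Next I will write down $L$ explicitly:
\begin{itemize}
\item $L[r_i,c]=y_{i+1}$ and $L[r_i,c']=y_i$ for $2\le i\le m-1$;
\item $L[r,c']=s''$, $L[r',c]=y_2$, and $L[r,c]=y_1$ is unchanged;
\item $L[r,c_j]=z_{j+1}$ and $L[r',c_j]=z_j$ for $j<m'$.
\end{itemize}

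Following $\pi^\gamma_{c,c'}$ in $L$ from row $r$ then gives the following. From $s''=y_m$ in $(r,c')$ we reach row $r_{m-1}$ in column $c$. Its column-$c'$ symbol $y_{m-1}$ leads to row $r_{m-2}$, and so on down to $r_2$. Its column-$c'$ symbol $y_2$ sits in column $c$ at row $r'$. So the partial column cycle $\gamma_L(c,c',r\to r')$ visits $r,r_{m-1},\dots,r_2$. Its distance is therefore $m-1\in(1,\infty)$, and it has the same row set $\cR$ after removing $r$.

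Now the new $s''$ is $L[r',c]=y_2=z_{m'}$, which lies in row $r$ at column $c_{m'-1}$. Following $\pi^\rho_{r,r'}$ in $L$ backwards along $c_{m'-1},c_{m'-2},\dots,c_1$ reaches $L[r',c_1]=s''$. This symbol sits in row $r$ at column $c'$. Hence $\dist^\rho_L<\infty$ and the partial row cycle has column set exactly $\cC$. Thus $\zeta_L(c,c',r,r')$ is well defined and occupies the same cells, and the within-pair swaps undo the original switch.

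\textbf{Main obstacle.} The main difficulty is purely bookkeeping. The indices run in reverse in $L$, and the off-by-one shifts must be handled carefully, notably that $(r,c)$ is excluded from $\zeta$ while $(r',c)$ is included. The delicate points are confirming that the new ``$c''$'' is $c_{m'-1}$ and that the recovered cycles terminate exactly at $r'$ and $c'$. I would check these against the example in \cref{fig:cross_switch} before writing the general argument.
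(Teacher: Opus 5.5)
Your argument is correct. It does not follow or vary a proof in the paper, because the paper gives none for this lemma: it quotes it from Cavenagh, Greenhill and Wanless~\cite{cycstrucrandom}. Your direct cell-by-cell check is therefore a valid self-contained replacement for that citation.

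Your computations all go through as stated. In the switched square $L$ one has:
\begin{itemize}
\item $L[r,c']=y_m$ and $L[r',c]=y_2=z_{m'}$;
\item $L[r_i,c]=y_{i+1}$ and $L[r_i,c']=y_i$ for $2\le i\le m-1$;
\item $L[r,c_j]=z_{j+1}$ and $L[r',c_j]=z_j$ for $j<m'$.
\end{itemize}
Consequently the column cycle of $L$ from $r$ runs $r,r_{m-1},\dots,r_2,r'$. The row cycle from the new $c''=c_{m'-1}$ runs $c_{m'-1},\dots,c_1,c'$. So the three families of cell pairs are the same disjoint pairs as before. Switching swaps contents within these fixed pairs, so it is an involution on them.

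The one step you leave implicit is why reaching $c'$ from $c_{m'-1}$ gives $\dist^\rho_L(r,r',c\to c')<\infty$. The reason is that $L[r,c_{m'-1}]=L[r',c]$ makes $c_{m'-1}$ the successor of $c$ on $\rho_L(r,r',c)$, so the two row cycles coincide.

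Compared with the bare citation, your route shows where the hypothesis $\dist^\gamma_{L'}(c,c',r\to r')>1$ is used. That hypothesis is equivalent to $c''\neq c'$, and it gives $m\ge 3$ and $m'\ge 2$. Hence $\cR\neq\emptyset$, $c_{m'-1}$ exists, and the new distance $m-1$ again exceeds $1$.

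Your computation also exhibits the reversal of $r_2,\dots,r_{m-1}$ along the column cycle. Combined with the observation that the rest of that cycle is untouched, this is exactly the fact the paper asserts without proof in Remark~\ref{rem:split}.
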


	As with cycle switches, performing a cross-switch can affect the cycles that it intersects. The following lemma, also proven by Cavenagh, Greenhill and Wanless \cite{cycstrucrandom}, records how performing a cross-switch changes whether  columns are hit  by certain row cycles in the Latin square. 
	
	\begin{lem}[The effect of a cross-switch on a row cycle]\label{l:cross_switch}
		Let $L$ be a Latin square of order $n$. Let $\{c, c', r\} \subseteq [n]$ with $c \neq c'$. Write the cycle of $\pi^\gamma_{c, c'}$ containing symbol $L[r, c]$ as $(y_1 = L[r, c], y_2, \ldots, y_m)$ and for $i \in [m]$, let $r_i$ be the row such that $L[r_i, c] = y_i$. Let $\{i, j\} \subseteq [m]$ with $i < j-1$ and suppose that $\dist_L^\rho(r_i, r_j, c \rightarrow c') \neq \infty$. Let $L'$ be the Latin square obtained from $L$ by switching on $\zeta_L(c, c', r_i, r_j)$. The set of rows hit by $\gamma_{L'}(c, c', r)$ is exactly the set of rows hit by $\gamma_L(c, c', r)$. 
		Let $\{u, v\} \subseteq [m]$ with $u \neq v$ and $\{u, v\} \cap \{i, j\} = \emptyset$. 
        \begin{enumerate}
            \item \label{item:no_split}
        If $|\{u, v\} \cap [i+1, j-1]| \neq 1$, then $\rho_{L'}(r_u, r_v, c)$ hits column $c'$ if and only if $\rho_L(r_u, r_v, c)$ hits column $c'$. 
         \item  \label{item:split} If $|\{u, v\} \cap [i+1, j-1]| = 1$, then $\rho_{L'}(r_u, r_v, c)$ hits column $c'$ if and only if $\rho_L(r_u, r_v, c)$ does not hit column $c'$.  
            \end{enumerate}
	\end{lem}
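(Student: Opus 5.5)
Throughout I use the notation of the statement: $L[r_k,c]=y_k$ and $L[r_k,c']=y_{k+1}$ for $k\in[m]$, with indices of the $y$'s taken modulo $m$. The plan has two steps. First I compute exactly which entries in columns $c$ and $c'$ change under the cross-switch. Then I read off both conclusions from this description: the first by a direct trace of the new column permutation, the second by viewing a row permutation as a permutation of columns and using the elementary fact about multiplying a permutation by a transposition.

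\textbf{Step 1: entries in columns $c,c'$ after the switch.} By definition, $\gamma_L(c,c',r_i\rightarrow r_j)$ uses rows $r_i,r_{i+1},\dots,r_{j-1}$. Also $s''=L[r_j,c]=y_j$. The part $\rho_L(r_i,r_j,c''\rightarrow c')$ of $\zeta$ only touches rows $r_i$ and $r_j$, and never touches columns $c$ or $c'$ there. Reading off the replacement entries gives the following description of $L'$.
\begin{itemize}
\item For $i<k<j$, row $r_k$ has its entries in columns $c$ and $c'$ swapped: $L'[r_k,c]=y_{k+1}$ and $L'[r_k,c']=y_k$.
\item Row $r_i$ keeps $y_i$ in column $c$ and gets $L'[r_i,c']=y_j$.
\item Row $r_j$ gets $L'[r_j,c]=y_{i+1}$ and keeps $y_{j+1}$ in column $c'$.
\item Every other row agrees with $L$ in columns $c$ and $c'$.
\end{itemize}
The hypothesis $i<j-1$ guarantees that the interval $[i+1,j-1]$ is nonempty. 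Hence the second partial cycle and the swapped rows are genuinely distinct pieces.

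\textbf{Step 2: the column cycle.} Now trace $\pi^\gamma_{c,c'}$ in $L'$. Outside the rows $r_i,\dots,r_j$ it acts as before. In addition $y_i\mapsto y_j$ (row $r_i$), then $y_j\mapsto y_{j-1}\mapsto\cdots\mapsto y_{i+1}$ (rows $r_{j-1},\dots,r_{i+1}$), and finally $y_{i+1}\mapsto y_{j+1}$ (row $r_j$). So the new cycle is the old one with the segment of rows $r_{i+1},\dots,r_{j-1}$ traversed in reverse order. In particular it hits exactly the same rows, which proves the first assertion.

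\textbf{Step 3: the row cycles $\rho(r_u,r_v,c)$.} Fix $u,v$ with $\{u,v\}\cap\{i,j\}=\emptyset$. Encode the row cycles of rows $r_u,r_v$ as a permutation $\sigma$ of the columns: $\sigma(x)$ is the column $x'$ with $L[r_u,x']=L[r_v,x]$. Row cycles through rows $r_u,r_v$ are exactly the cycles of $\sigma$. Thus $\rho_L(r_u,r_v,c)$ hits $c'$ if and only if $c$ and $c'$ lie in the same cycle of $\sigma$.

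By Step 1, rows $r_u$ and $r_v$ change (if at all) only by swapping their entries in columns $c$ and $c'$, and this happens precisely for indices in $[i+1,j-1]$. Write $\tau=(c\ c')$. There are three cases.
\begin{itemize}
\item If neither index lies in $[i+1,j-1]$, then $\sigma$ is unchanged.
\item If both lie in $[i+1,j-1]$, the new permutation is $\tau\sigma\tau$. Conjugation by $\tau$ just relabels $c\leftrightarrow c'$, so whether $c$ and $c'$ share a cycle is preserved. This proves part~(1).
\item If exactly one index lies in $[i+1,j-1]$, the new permutation is $\tau\sigma$ or $\sigma\tau$ (swapping in row $r_u$ or in row $r_v$ respectively). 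For this case I invoke the standard fact that multiplying a permutation by a transposition $(a\ b)$ splits the cycle containing $a$ and $b$ into two, with $a$ and $b$ in different pieces, when $a,b$ share a cycle. When $a,b$ lie in different cycles, it merges those two cycles into one. Hence "$c,c'$ in the same cycle" flips, proving part~(2).
\end{itemize}

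\textbf{Main obstacle.} The only delicate point is the bookkeeping in Step 1. I need to confirm that the row-cycle part of $\zeta$ touches neither columns $c,c'$ in rows other than $r_i,r_j$, nor rows $r_u,r_v$ at all. I also need the index conventions (which $y$ lands where) to be right. The remaining steps are formal.
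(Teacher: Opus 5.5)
Your proof is correct. The paper does not prove this lemma itself; it quotes it from Cavenagh, Greenhill and Wanless \cite{cycstrucrandom}. So there is no in-paper argument to compare against, and your proposal works as a self-contained proof.

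Your Step 2 also proves Remark~\ref{rem:split}, namely the reversal of the segment $r_{i+1},\dots,r_{j-1}$, which the paper likewise states without proof. Encoding the row cycles on $r_u,r_v$ as a permutation $\sigma$ of the columns is a clean device. The switch then replaces $\sigma$ by $\sigma$, $\tau\sigma\tau$, $\tau\sigma$ or $\sigma\tau$, and this shows directly why part (2) is a flip rather than merely some change.

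Two small points deserve a line of justification, since you flag them as the delicate bookkeeping but only assert them.

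First, $\rho_L(r_i,r_j,c''\rightarrow c')$ avoids columns $c$ and $c'$. Since $L[r_i,c'']=y_j=L[r_j,c]$, the column $c''$ is the successor of $c$ on $\rho_L(r_i,r_j,c)$. The partial cycle therefore runs from $c''$ up to but not including $c'$, and $c$ comes after $c'$, so $c$ is never reached. Moreover $c''\neq c'$, because $L[r_i,c']=y_{i+1}\neq y_j$. This is where $i<j-1$ is used, and it is also needed for the partial cycle to be well defined.

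Second, $L'[r,c]=L[r,c]$. The entries of column $c$ change only in rows $r_{i+1},\dots,r_j$, and $r=r_1$ is not among them because $i\geq 1$. Hence the cycle you trace in Step 2 really is $\gamma_{L'}(c,c',r)$.
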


\begin{rem} \label{rem:split}
     Let $L'$ be a Latin square obtained from $L$ by switching on $\zeta_L(c,c',r_i,r_j)$ and suppose that we are in situation  \eqref{item:split} and we have $\{u,v\}\subset [m]$ with $|\{u, v\} \cap [i+1, j-1]| = 1$. Now considering $L'$, write the cycle of $\pi^\gamma_{c, c'}$ containing symbol $L'[r, c]=L[r,c]$ as $(y'_1 = L[r, c], y'_2, \ldots, y'_m)$ and for $i \in [m]$, let $r'_i$ be the row such that $L'[r'_i, c] = y'_i$. Then if $r_i=r'_{i'},r_j=r'_{j'},r_u=r'_{u'}$ and $r_v=r'_{v'}$, we have that  $i=i'$, $j=j'$ and $|\{u', v'\} \cap [i+1, j-1]| = 1$  still in the new square $L'$. Indeed the ordering $r'_1,\ldots, r'_{m}$ is obtained from the ordering $r_1,\ldots,r_m$, by inverting the subsequence $r_{i+1},\ldots,r_{j-1}$ and leaving all other entries in the sequence fixed. 
\end{rem}

An example of the situation \eqref{item:split} of Lemma \ref{l:cross_switch} is given in Figure \ref{fig:cross_switch_effect}. The Latin squares in this figure are the same as those in Figure \ref{fig:cross_switch}. Here the cross-switch is on $\zeta_L(c, c', r, r')$ and so $i=1$ and $j=6$. The row switch is on $\rho_L(r_4,r_8,c)$ and so $u=4$ and $v=8$ with $|\{u, v\} \cap [i+1, j-1]| = |\{4, 8\} \cap [2, 5]| =1$. 
    
	\begin{figure}[h]
		\centering
		\includegraphics[width=0.84\linewidth]{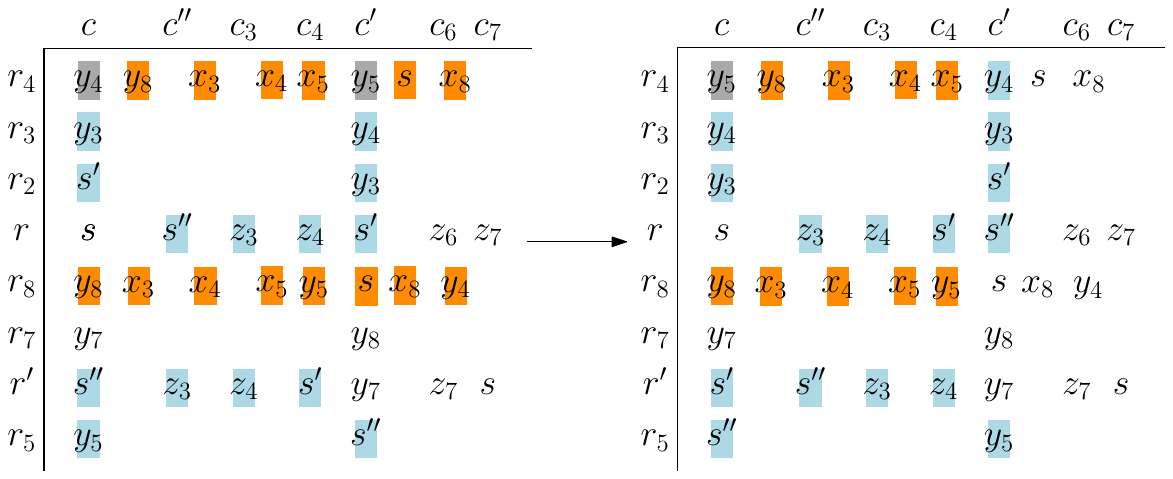}
		\caption{An example of a row cycle $\rho_L(r_4,r_8,c)$ (in orange/grey) before and after a cross-switch on $\zeta_L(c, c', r, r')$ (in blue/grey). }
		\label{fig:cross_switch_effect}
	\end{figure}

	\subsection{$\eta$-switching}\label{ss:eta_switch}
	
	We are now ready to define our  final switching procedure, which we call $\eta$-switching. Let $\{r, r', c\} \subseteq [n]$ with $r \neq r'$. We will define a set $\eta = \eta_L(r, r', c)$ of entries of $L$. Let $c'$ be the column such that $L[r, c'] = L[r', c]$ and let $r''$ be the row such that $L[r'', c'] = L[r, c]$. Also let $s = L[r, c]$ and let $s' = L[r', c]$. The definition of $\eta$ has three different cases and we say that $\eta$ is of Type One, Two, or Three depending on which case it falls into.

\smallskip

	\noindent\textbf{Type One:} $r''=r'$. In this case, $\rho_L(r, r', c)$ is an intercalate and we set 
	\[
	\eta = \rho_L(r, r', c).
	\] 
	
	\noindent\textbf{Type Two:} $r'' \neq r'$ and $\dist_L^\rho(r', r'', c'\rightarrow c) = \infty$. In this case we set 
	\[
	\eta = \{(r, c, s), (r', c, s'), (r, c', s')\} \cup \rho_L(r', r'', c').
	\]
	
	\noindent\textbf{Type Three:} $r'' \neq r'$ and $\dist_L^\rho(r', r'', c'\rightarrow c) \neq \infty$. In this case, let $c''$ be the column such that $L[r', c''] = s$. Set
	\[
	\eta = \{(r, c, s), (r', c, s'), (r, c', s'), (r'', c', s)\} \cup \rho_L(r', r'', c''\rightarrow c).
	\]

\begin{figure}[h]
		\centering
		\includegraphics[width=0.64\linewidth]{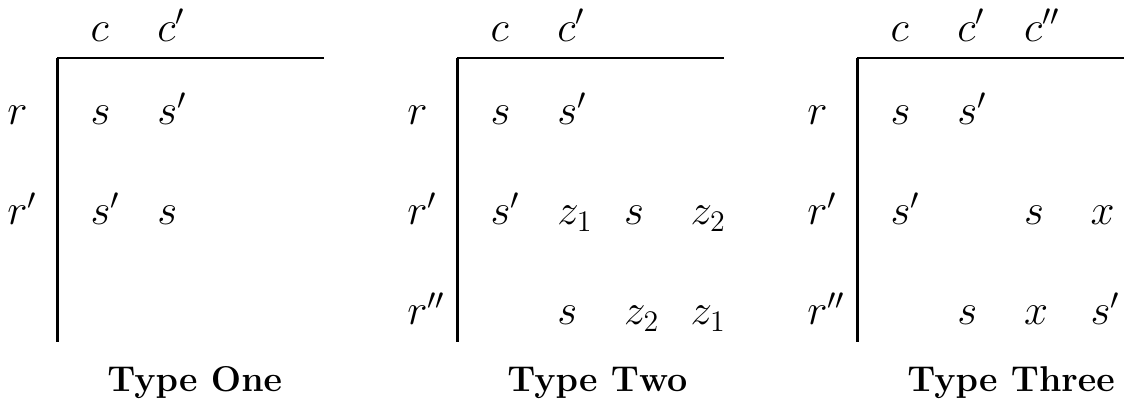}
		\caption{Examples of the three types of $\eta=\eta_L(r,r',c)$.  }
		\label{fig:eta_set}
	\end{figure}
	
	Next, we define how to switch on the substructure $\eta$. 
	
	\smallskip
	
	\noindent\textbf{Type One:} To switch on $\eta$, we simply switch on the row cycle $\rho_L(r, r', c)$. 
	
	\noindent\textbf{Type Two:} To switch on $\eta$, we first switch on $\rho_L(r', r'', c')$ to create the intermediate Latin square $L'$. The substructure $\rho_{L'}(r, r', c)$ is an intercalate and so $\eta_{L'}(r, r', c)$ is of Type One. We then proceed by switching $L'$ on $\rho_{L'}(r, r', c)$.
	
	\noindent\textbf{Type Three:} Let $\cC$ be the set of columns hit by $\rho_L(r', r'', c''\rightarrow c)$. To switch on $\eta$, we replace the entries in $\eta$ by the entries in
	\[
	\{(r, c, s'), (r', c, s), (r, c', s), (r'', c', s')\} \cup \{(r', y, L[r'', y]), (r'', y, L[r', y]) : y \in \cC\}.
	\]
	In other words, for each column hit by $\rho_L(r', r'', c''\rightarrow c)$, we swap the two symbols involved in $\rho_L(r', r'', c''\rightarrow c)$ in that column, as well as swapping $s$ and $s'$ in the cells $(r, c)$, $(r', c)$, $(r, c')$, and $(r'', c')$.
	
	\begin{figure}[h]
		\centering
		\includegraphics[width=0.64\linewidth]{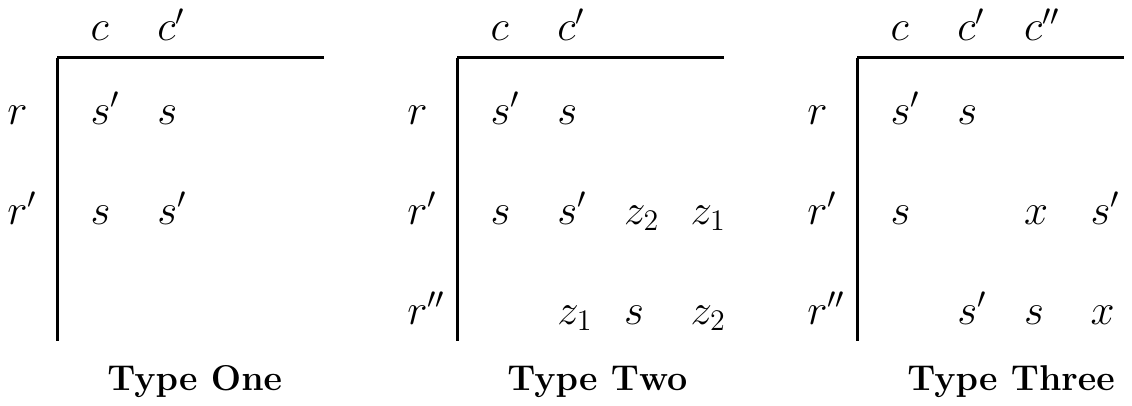}
		\caption{The examples from Figure \ref{fig:eta_set} after performing the $\eta$-switch.  }
		\label{fig:eta_switch}
	\end{figure}
	
	Regardless of whether $\eta$ is of Type One, Two, or Three, when switching $L$ on $\eta$, the entries $(r, c, s)$, $(r', c, s')$, and $(r, c', s')$ are replaced by $(r, c, s')$, $(r', c, s)$, and $(r, c', s)$, respectively. Furthermore, the only other changes to $L$ occur in rows $r'$ and $r''$. We also note that $\eta$-switching is simply a combination of cycle switching and cross-switching. If $\eta$ is of Type One, then switching on $\eta$ just amounts to switching on a row cycle. If $\eta$ is of Type Two, then switching on $\eta$ amounts to switching on two row cycles. If $\eta$ is of Type Three, then switching on $\eta$ is a special case of cross-switching. Explicitly, switching on $\eta$ is equivalent to switching on $\zeta_L(c', c, r', r'')$. In this case, $\dist_L^\gamma(c', c, r'\rightarrow r'') = 2$. 
    See Figure~\ref{f:eta_switch} for a further example of a Type Two $\eta$-switch.  
    This switching procedure has previously been used, for example, in~\cite{genLS,Pitt}.
	
	\begin{figure}[ht]
		\[
		\begin{pmatrix}
			1&2&3&4&5&6&7&8&9&10\\
			4&3&\col5&\col6&\col9&2&\col1&10&\col7&8\\
			10&\col4&\col6&\col9&\col7&8&\col5&2&\col1&3\\
			2&5&1&3&4&7&8&6&10&9\\
			3&6&2&1&8&5&10&9&4&7\\
			6&7&4&2&10&1&9&3&8&5\\
			7&8&9&10&1&3&2&4&5&6\\
			8&\col1&10&5&3&9&\col4&7&6&2\\
			9&10&7&8&2&4&6&5&3&1\\
			5&9&8&7&6&10&3&1&2&4\\
		\end{pmatrix}
		\quad\longrightarrow\quad
		\begin{pmatrix}
			1&2&3&4&5&6&7&8&9&10\\
			4&3&\col6&\col9&\col7&2&\col5&10&\col1&8\\
			10&\col1&\col5&\col6&\col9&8&\col4&2&\col7&3\\
			2&5&1&3&4&7&8&6&10&9\\
			3&6&2&1&8&5&10&9&4&7\\
			6&7&4&2&10&1&9&3&8&5\\
			7&8&9&10&1&3&2&4&5&6\\
			8&\col4&10&5&3&9&\col1&7&6&2\\
			9&10&7&8&2&4&6&5&3&1\\
			5&9&8&7&6&10&3&1&2&4\\
		\end{pmatrix}
		\]
		\caption{\label{f:eta_switch}The highlighted entries in the Latin square $L$ on the left form the substructure $\eta = \eta_L(8, 3, 2)$, which is of Type Two. The Latin square on the right is obtained from $L$ by switching on $\eta$.}
	\end{figure}
	
	Note that $\eta$-switching does not immediately generalise to $k \times n$ Latin rectangles with $k < n$, since the row $r''$ may not exist in this case. This is the primary reason why the results of this paper apply to Latin squares only.

We now determine the reversibility of $\eta$-switching.

\begin{lem}[Reversibility for $\eta$-switching]\label{l:eta_reverse}
	Let $L$ be a Latin square of order $n$ and let $\{r, r', c\} \subseteq [n]$ with $r \neq r'$. If $\eta_L(r, r', c)$ is of Type One, then there are at most $n$ Latin squares $L'$ from which $L$ can be obtained by switching on $\eta_{L'}(r, r', c)$. If $\eta_L(r, r', c)$ is not of Type One, then there is at most one Latin square $L'$ from which $L$ can be obtained by switching on $\eta_{L'}(r, r', c)$.
\end{lem}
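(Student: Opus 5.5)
The plan is to reconstruct $L'$ from $L$ by reading off as much of the switch as possible from $L$, and then to count the remaining freedom. The key observation is the one recorded just before the lemma: whatever the type of $\eta_{L'}(r,r',c)$, the switch replaces $(r,c,s),(r',c,s'),(r,c',s')$ by $(r,c,s'),(r',c,s),(r,c',s)$. All other changes happen in rows $r'$ and $r''$.

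Hence from $L$ alone we recover $s'=L[r,c]$ and $s=L[r',c]$, and $c'$ as the unique column with $L[r,c']=s$. The proof then splits according to whether $\rho_L(r,r',c)$ is an intercalate.

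\textbf{Which types can occur.} I will show that Types One and Two always produce an intercalate on rows $r,r'$ through column $c$, and Type Three never does.
\begin{itemize}
\item In Type One the last step is a switch on an intercalate, and this produces an intercalate.
\item In Type Two the last step is likewise a switch on the intercalate $\rho_{M}(r,r',c)$ of an intermediate square $M$, again producing an intercalate.
\item In Type Three we have $L'[r',c]=s'$, so $L'[r',c']\neq s'$. The cell $(r',c')$ is not among the changed cells: the changed cells in row $r'$ are $(r',c)$ and those in columns of $\rho_{L'}(r',r'',c''\to c)$, and $c'$ is not among these columns because $s$ sits in column $c'$ of row $r''$ and only re-enters the cycle at its wrap-around point, after $c$. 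Therefore $L[r',c']=L'[r',c']\neq s'=L[r,c]$. So $L$ has no intercalate on rows $r,r'$ through $(r,c),(r,c')$, and $\rho_L(r,r',c)$ is not an intercalate.
\end{itemize}
Checking this last point against the indexing of the partial cycle is the one step I expect to require care.

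\textbf{Case: $\rho_L(r,r',c)$ is not an intercalate.} Then only Type Three is possible. The row $r''$ is determined as the row with $L[r'',c']=s'$, since the switch puts $s'$ in cell $(r'',c')$. A Type Three $\eta$-switch is exactly the cross-switch on $\zeta_{L'}(c',c,r',r'')$. By Lemma~\ref{l:cross_reverse}, $L'$ is then uniquely determined as the result of switching $L$ on $\zeta_L(c',c,r',r'')$. This gives at most one $L'$.

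\textbf{Case: $\rho_L(r,r',c)$ is an intercalate.}
\begin{itemize}
\item If $L'$ was of Type One, then by Fact~\ref{f:row_cyc_reverse} it is uniquely determined by switching $L$ back on $\rho_L(r,r',c)$.
\item If $L'$ was of Type Two, the same reversal first recovers the intermediate square $M$ uniquely. Then $M$ arises from $L'$ by switching on $\rho_{L'}(r',r'',c')$, where $r''\notin\{r,r'\}$ is not visible in $M$. For each of the at most $n-2$ choices of $r''$, Fact~\ref{f:row_cyc_reverse} gives at most one $L'$.
\end{itemize}
Altogether there are at most $1+(n-2)\le n$ candidates.

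\textbf{Matching the lemma's hypothesis.} The hypothesis of the lemma concerns the type of $\eta_L(r,r',c)$ rather than of $\rho_L(r,r',c)$. Since $L[r',c]=s$ and $L[r,c']=s$, $\eta_L(r,r',c)$ is of Type One exactly when $L[r',c']=L[r,c]$, that is, exactly when $\rho_L(r,r',c)$ is an intercalate. So the two cases above are precisely the two cases of the statement.
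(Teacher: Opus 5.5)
Your proposal is correct and follows essentially the same route as the paper. You split on whether $\eta_L(r,r',c)$ is of Type One, undo Type One and Type Two origins via Fact~\ref{f:row_cyc_reverse} (one square for the intercalate, plus at most one per choice of $r''$), and undo a Type Three origin via Lemma~\ref{l:cross_reverse} after reading $r''$ off as the row with $L[r'',c']=L[r,c]$. The differences are cosmetic: you verify the type-transition facts that the paper only calls easy (notably that $c'$ is the last column of the cycle of $\pi^\rho_{r',r''}$ through $c''$, so $(r',c')$ is untouched by a Type Three switch), and you count $n-2$ rather than $n-1$ choices of $r''$.
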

\begin{proof}
	Suppose that $L'$ is a Latin square from which $L$ can be obtained by switching on $\eta_{L'}(r, r', c)$. Let $\eta_1 = \eta_L(r, r', c)$ and let $\eta_2 = \eta_{L'}(r, r', c)$. The following facts are easy to verify:
	\begin{itemize}
		\item If $\eta_2$ is of Type One, then so is $\eta_1$.
		\item If $\eta_2$ is of Type Two, then $\eta_1$ is of Type One.
		\item If $\eta_2$ is of Type Three, then so is $\eta_1$.
	\end{itemize}
	In particular, if $\eta_1$ is of Type Two, then no such $L'$ can exist.
	
	Suppose that $\eta_1$ is of Type One.	Let $c'$ be the column of $L$ such that $L[r, c'] = L[r', c]$. If $\eta_2$ is of Type One also, then there is exactly one choice for $L'$, namely, the square obtained from $L$ by switching on $\eta_1$. If $\eta_2$ is of Type Two, then $L'$ can be obtained from $L$ by first switching on $\eta_1$ to produce a Latin square $L''$, then switching $L''$ on a row cycle $\rho_{L''}(r', r'', c')$ for some row $r''$ such that $\rho_{L''}(r', r'', c')$ does not hit column $c$. There are trivially at most $n-1$ choices for $r''$, thus there are at most $n-1$ choices for $L'$ in this case. Therefore, the total number of choices for $L'$ is at most $n$.
	
	Now suppose that $\eta_1$ is of Type Three. Let $r_1$ be the row such that $L[r_1, c'] = L[r, c]$ and let $r_2$ be the row such that $L'[r_2, c'] = L'[r, c]$. It is easy to verify using the definition of $\eta$-switching that $r_1 = r_2$. Thus $L$ has been obtained from $L'$ by switching on $\eta_{L'}(r, r', c) = \zeta_{L'}(c', c, r', r_1)$. Thus, by \cref{l:cross_reverse}, $L'$ is uniquely determined: it is the Latin square obtained from $L$ by switching on $\zeta_L(c', c, r', r_1)$. This completes the proof.
\end{proof}

\section{Intercalates}\label{s:interc}

Our general strategy to prove \cref{t:main} is as follows. Let $\bL$ be a random Latin square of order $n$ and let $P = \{q_1, q_2, \ldots, q_N\}$ be a partial Latin square of order $n$ satisfying the hypotheses of \cref{t:main}. Let $P_0 = \emptyset$ and for $i \in [N]$, let $P_i = \{q_j : 1 \leq j \leq i\}$. By the chain rule of probability,
\[
\Pr(\bL \supseteq P) = \prod_{i=1}^{N} \Pr(\bL \supseteq P_i | \bL \supseteq P_{i-1}).
\]
We will use $\eta$-switching to give upper and lower bounds for $\Pr(\bL \supseteq P_i | \bL \supseteq P_{i-1})$ for each $i \in [N]$. By \cref{l:eta_reverse}, in order to do this effectively, we must be able to upper bound the probability that certain substructures $\eta_{\bL}(r, r', c)$ are of Type One, when we condition on the event that $\bL$ contains $P_{i-1}$. Equivalently, we need to be able to upper bound the probability that $\bL$ has an intercalate involving rows $r$, $r'$ and column $c$, given that $\bL$ contains $P_{i-1}$. 

Intercalates in random Latin squares is a well studied topic~\cite{cycstrucrandom, LSparity, KSS, KSSS, KS, manysubsq}.  If $\bL$ is a random Latin square of order $n$ and $\{r, r', c\} \subseteq [n]$ with $r \neq r'$, then the probability that $\bL$ has an intercalate involving rows $r$, $r'$ and column $c$ is $(1+o(1))/n$. It is natural to believe that if we condition on the event that $\bL$ contains a particular substructure that is not `too large', then the probability of this event should not change by much. However, none of the previous work on intercalates in random Latin squares allows for such a conditioning.  
\cref{t:interc} below, which is the main result of this section, partially deals with this situation by giving an upper bound on the probability that $\bL$ has an intercalate involving rows $r$, $r'$ and column $c$, given that $\bL$ contains some partial Latin square satisfying the hypotheses of \cref{t:main}.  \cref{t:interc} may be of independent interest. To prove it, we will employ all the switching procedures discussed in~\cref{s:switch}. 

\begin{thm}\label{t:interc}
	Let $P$ be a partial Latin square of order $n$. Let $\alpha, \beta > 0$ be such that $|\cR_P| \leq \alpha n$ and $|\cC_P| \leq \beta n$ and suppose that $2\alpha+\beta<1$. Let $\bL$ be a random Latin square of order $n$ conditioned on the event that $\bL$ contains $P$.
	Let $\{r, r', c\} \subseteq [n]$ with $r' \not\in \cR_P$. The probability that $\eta_{\bL}(r, r', c)$ is of Type One is at most $D/n$ for some constant $D = D(\alpha, \beta)$ that satisfies $D \to 22$ as $\alpha, \beta \to 0$.
\end{thm}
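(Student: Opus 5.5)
We plan to use a switching (double counting) argument inside the space $\Omega_P$ of Latin squares containing $P$. Let $\cA\subseteq\Omega_P$ be the set of squares $L$ in which $\eta_L(r,r',c)$ is of Type One, i.e.\ $\rho_L(r,r',c)$ is an intercalate occupying cells $(r,c),(r,c'),(r',c),(r',c')$. We will build a bipartite switching graph between $\cA$ and $\cB=\Omega_P\setminus\cA$. The aim is to show that every $L\in\cA$ has at least roughly $(1-2\alpha-\beta-o(1))n^2$ (or a comparable quadratic number of) neighbours in $\cB$, while every $L'\in\cB$ has at most $O(n)$ neighbours in $\cA$. Then $|\cA|\le (D/n)|\cB|$, which gives the bound. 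Since $r'\notin\cR_P$, row $r'$ is entirely free, so only the cells of the intercalate in row $r$ can collide with $P$. To break the intercalate we must change an entry in row $r'$, and it suffices to keep row $r$ and the entries of $P$ fixed.

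\textbf{Forward switch.} Given $L\in\cA$, we destroy the intercalate by a column cycle switch $\gamma_L(c',c'',r')$ or a cross-switch that alters row $r'$ at column $c$ or $c'$ but avoids all cells of $P$ and avoids row $r$. By \cref{l:col_cyc_switch} (with the roles of the rows $r,r'$ and the lemma's row swapped, using \cref{l:row_cyc_switch} and \cref{l:cross_switch} as needed), a switch on a column cycle through $(r',c')$ and a column $c''$ outside the intercalate merges $\rho(r,r',c)$ with $\rho(r,r',c'')$ into a longer row cycle, so the result lies in $\cB$.

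The constraint is that the column cycle $\gamma_L(c',c'',r')$ must avoid row $r$ and all rows of $\cR_P$, and must not touch columns in $\cC_P$ in rows of $\cR_P$. Column cycles can be long, so we cannot use whole cycles freely. Instead we use partial column cycles via cross-switches $\zeta_L(c',c'',r',x)$ with $x\notin\cR_P\cup\{r\}$, which only touch the rows on a partial cycle. The choice of $(c'',x)$ ranges over about $n^2$ pairs. We discard pairs for which:
\begin{itemize}
\item $c''\in\cC_P$, which loses about $\beta n\cdot n$ pairs;
\item the relevant partial cycles pass through $\cR_P$ or row $r$;
\item the relevant row cycles pass through $\cR_P$ or row $r$.
\end{itemize}
The last two are charged via the row/column symmetry to roughly $2\alpha n\cdot n$ pairs, since both the partial column cycle and the partial row cycle of a cross-switch may hit $\cR_P$. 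This is where $2\alpha+\beta<1$ enters: it guarantees $(1-2\alpha-\beta-o(1))n^2$ valid switches.

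\textbf{Reverse count.} Given $L'\in\cB$, we count the $L\in\cA$ mapping to it. By \cref{l:cross_reverse} and \cref{f:col_cyc_reverse}, $L$ is determined by $L'$ together with the switch parameters. The parameters that must be recovered are the column $c'$ (determined by $L'[r',c]$, or at most $n$ choices) and the row $x$ or column $c''$ (at most $n$ choices). This crude count only gives $n^2$, which is too weak. To bring it down to $O(n)$, we must exploit that after the switch row $r$ agrees with $L$ and that $\rho_{L'}(r,r',c)$ has a specific structure (by \cref{l:col_cyc_switch}\eqref{i:row one intersect} it is the union of two old row cycles, one of length 2). This pins $c'$ down to one of the two columns of $\rho_{L'}(r,r',c)$ adjacent to $c$, so that only about $n$ reverse choices remain. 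Combining the forward and reverse counts gives $|\cA|\cdot c_0 n^2\le C n|\cB|$, hence $\Pr\le D/n$. Tracking the constants carefully (several cases of cross versus cycle switch, each with a few valid reverse choices) should give $D\to 22$ as $\alpha,\beta\to0$.

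\textbf{Main obstacle.} The hard step is guaranteeing enough forward switches that avoid $P$. Cycles of length up to $n$ can wander through $\cR_P$ or $\cC_P$, so we cannot simply delete bad parameters by a trivial count. The plan is to average: sum over $c''$ and $x$ the indicator that the partial cycle hits a forbidden row, and use that for fixed columns $c',c''$ each row $y$ lies on exactly one column cycle. The number of pairs $(c'',x)$ whose partial cycle from $r'$ to $x$ passes through $y$ is then at most about $n$ per forbidden row, giving the $2\alpha n^2$ loss. Making this averaging rigorous while keeping the reverse multiplicity at $O(n)$ is the crux, and it determines the constant 22.
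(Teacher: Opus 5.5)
There is a genuine gap, and it sits exactly at the step you call the crux: nothing in the proposal shows that a typical $L\in\mathcal{A}$ admits $\Theta(n^2)$ admissible switches.

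\textbf{The forward count.} The cross-switch $\zeta_L(c',c'',r',x)$ is defined only when $x$ lies on $\gamma_L(c',c'',r')$ at distance greater than $1$ and $c''$ lies on $\rho_L(r',x,c')$. Moreover, it never alters the cell $(r',c')$. So if row $r$ is untouched, the intercalate is destroyed only when $c$ happens to lie on the switched partial row cycle.

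These are properties of the cycle structure of $L$, and they can fail for \emph{every} pair $(c'',x)$. If all column cycles through $r'$ in columns $c',c''$ are intercalates, as in the Cayley table of $\mathbb{Z}_2^k$, there is no admissible $x$ at all. So subtracting a deterministic count of bad pairs from $n^2$ cannot work. You would need a lower bound on the \emph{average} number of switches over $\mathcal{A}$, which requires a further switching argument that you do not supply.

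Your averaging also fails on its own terms. The partial column cycle from $r'$ to $x$ is a prefix of $\gamma_L(c',c'',r')$. A forbidden row at position $t$ therefore spoils every $x$ beyond position $t$, and summed over $c''$ this can be of order $n^2$ pairs per forbidden row, not $O(n)$. This matters for two reasons. First, $c'$ is forced by $L$ and may lie in $\cC_P$, yet you only exclude $c''\in\cC_P$. Second, row $r$ itself can sit on these cycles.

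\textbf{Other accounting problems.} The partial row cycle lives in rows $r',x\notin\cR_P$ and cannot meet $P$, so your explanation of the $2\alpha$ loss is also off. The reverse count is muddled too: pinning down $c'$ does not reduce the two free parameters $(c'',x)$ to one. What would actually give multiplicity $O(n)$ is that $x$ is recoverable as the row of column $c$ carrying $L'[r,c']$. Finally, the claim that careful tracking yields $D\to 22$ has no basis in the scheme you describe.

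\textbf{How the paper proceeds.} The paper never needs a quadratic number of switches.

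First, it runs $\eta$-switching along a chain $r_0=r$, $r_1=r'$, $c_0=c$, $c_1$, $r_2$, $c_2,\dots$. This gives the recurrence $p_{j-1}\le (np_j+1)/(n(1+p_j-2\alpha-\beta)-3j-2)$, where $p_j$ is the conditional probability that the next link closes into an intercalate. The recurrence transfers the problem to an intercalate on rows $r_2,r_3\notin\cR_P$ and columns $c_2,c_3\notin\cC_P$, anchored by the cells $(r_1,c_2)$, $(r_2,c_2)$, $(r_3,c_2)$, $(r_2,c_3)$. This is where $2\alpha+\beta<1$ enters: the $\eta$-switch creates a new row, column and row that must avoid $\cR_P$, $\cC_P$ and $\cR_P$ respectively.

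For this clean intercalate the paper switches on $\rho_L(r_3,x,c_3)$, preceded by $\gamma_L(c_2,c_3,x)$ when that row cycle meets $c_2$. The anchoring makes the reverse multiplicity at most $2$, because $x$ is the row carrying $L'[r_2,c_2]$ in column $c_3$.

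The admissible $x$ are essentially the rows off the column cycle $\gamma_L(c_2,c_3,r_1)$. Their number is not bounded pointwise. Instead it is shown to be at least $n(1-\alpha)/10$ \emph{on average}, by an auxiliary double counting: cross-switches from bad to good triples, then row cycle switches that split $\gamma_L$, then Cauchy--Schwarz. This gives the bound $\frac{20}{n(1-\alpha)}(1+o(1))$ at depth two, and two applications of the recurrence give $D\to 22$.

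Your sketch is missing both ideas: moving the intercalate away from $P$ with an anchored, $O(1)$-reversible switch, and replacing a pointwise forward count by an averaged one.
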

\begin{proof}
	Let $Y_0$ be the set of Latin squares of order $n$ that contain $P$. For $L \in Y_0$, let $r_0 = r_0(L) = r$, let $c_0 = c_0(L) = c$, and let $s_0 = s_0(L) = L[r_0, c_0]$. Also let $r_1 = r_1(L) = r'$ and let $c_1 = c_1(L)$ be the column such that $L[r_0, c_1] = L[r_1, c_0]$. For $1\leq j < n(1-2\alpha-\beta)/3$,  we make the following recursive definitions:   
	\begin{itemize}
		\item For $L \in Y_{j-1}$, let $s_j(L) = L[r_j(L), c_j(L)]$.
		\item For $L \in Y_{j-1}$, let $r_{j+1} = r_{j+1}(L)$ be the row such that $L[r_{j+1}, c_j(L)] = s_{j-1}(L)$. 
		\item For $L \in Y_{j-1}$, let $c_{j+1} = c_{j+1}(L)$ be the column such that $L[r_{j}(L), c_{j+1}] = s_{j-1}(L)$. 
		\item Let $X_j$ be the set of squares $L \in Y_{j-1}$ such that $s_j(L) = s_{j-1}(L)$.
		\item Let $Y_j$ be the set of squares $L \in Y_{j-1}$ such that $s_j(L) \neq s_{j-1}(L)$, $r_{j+1}(L) \notin \cR_P \cup \{r_\ell(L) : 0 \leq \ell \leq j\}$, and $c_{j+1}(L) \not\in \cC_P \cup \{c_\ell(L) : 0 \leq \ell \leq j\}$.
		\item Let $p_{j-1} = \Pr(\bL \in X_j | \bL \in Y_{j-1})$.
	\end{itemize}
An example is given in Figure \ref{fig:intercalate chase}.	Note that $p_0 = |X_1|/|Y_0|$ is the probability that we want to bound.

   	\begin{figure}[h]
		\centering
		\includegraphics[width=0.24\linewidth]{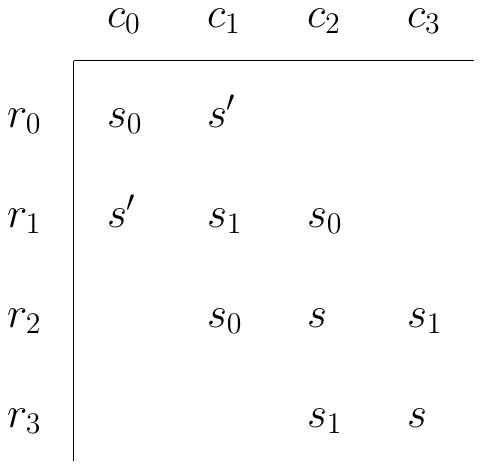}
		\caption{An example of a Latin square $L\in X_3\subseteq Y_2$. Here we have $r_0=r$, $r_1=r'$, $c_0=c$ and $s_2=s_3=s$.  }
		\label{fig:intercalate chase}
	\end{figure} 
	
	The first claim in our proof is the following.
	
	\begin{claim}\label{cl:pjrecurrence}
		For all $1\leq j < n(1-2\alpha-\beta)/3$,
		\begin{equation*}\label{e:pjrecurrence}
			p_{j-1}	\leq \frac{np_j+1}{n(1+p_j-2\alpha-\beta)-3j-2}.
		\end{equation*}
	\end{claim}
	\begin{proofclaim}
		We prove the claim using $\eta$-switching. We consider switching squares $L \in X_j$ on substructures of the form $\eta_L(r_j(L), x, c_j(L))$ to obtain squares in $Y_j$.  
		
		Let $j \geq 1$. Let $L \in X_j$, let $r_j = r_j(L)$, let $c_j = c_j(L)$, and let $s_{j-1} = s_{j-1}(L) = s_{j}(L)$. Let $x \in [n] \setminus (\cR_P \cup \{r_\ell(L) : 0 \leq \ell \leq j\})$. Let $y$ be the column such that $L[r_j, y] = L[x, c_j]$ and let $x'$ be the row such that $L[x', y] = s_{j-1}$. Suppose that $y \in [n] \setminus (\cC_P \cup \{c_\ell(L) : 0 \leq \ell \leq j\})$ and $x' \in [n] \setminus (\cR_P \cup \{r_\ell(L) : 0 \leq \ell \leq j\})$ and let $L'$ be the square obtained from $L$ by switching on $\eta_L(r_j, x, c_j)$. Then $\eta_L(r_j, x, c_j)$ does not hit any non-empty cell of $P$ nor any cell in
		\[
		\{(r_\ell, c_\ell), (r_{\ell+1}, c_\ell), (r_\ell, c_{\ell+1}) : 0 \leq \ell \leq j-1\}.
		\]
		Therefore, $L' \in Y_{j-1}$. Furthermore, $s_j(L') = L[x,c_j] \neq s_{j-1} = s_{j-1}(L')$. Also, $L'[x, c_j] = s_{j-1}(L')$ and $L'[r_j, y] = s_{j-1}(L')$. Therefore, $r_{j+1}(L') = x \in [n] \setminus (\cR_P \cup \{r_\ell(L) : 0 \leq \ell \leq j\})$ and $c_{j+1}(L') = y \in [n] \setminus (\cC_P \cup \{c_\ell(L) : 0 \leq \ell \leq j\})$. Hence, $L' \in Y_j$.

        	\begin{figure}[h]
		\centering
		\includegraphics[width=0.64\linewidth]{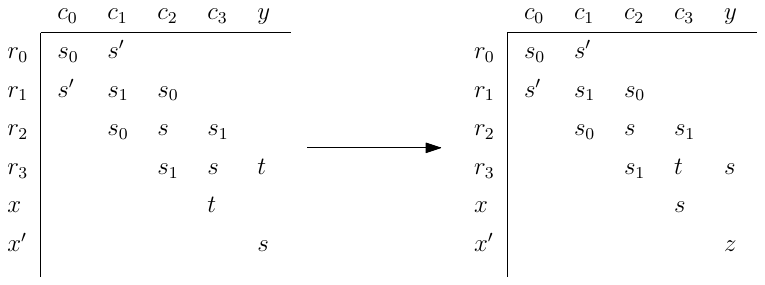}
		\caption{An example of a switch from the Latin square $L\in X_3\subseteq Y_2$ from Figure \ref{fig:intercalate chase}. Here it could be that $z=s$ or not depending on whether the $\eta$-switch was case 3 or case 2, respectively.  }
		\label{fig:intercalate switch}
	\end{figure} 
		
		There are at most $|\cC_P|+j+1$ choices of $x \in [n] \setminus (\cR_P \cup \{r_\ell(L) : 0 \leq \ell \leq j\})$ such that $y \in \cC_P \cup \{c_\ell(L) : 0 \leq \ell \leq j\}$ and there are at most $|\cR_P|+j+1$ choices such that $x' \in \cR_P \cup \{r_\ell(L) : 0 \leq \ell \leq j\}$. Hence, there are at least $n-2|\cR_P|-|\cC_P|-3j-3$ choices of $x \in [n] \setminus (\cR_P \cup \{r_\ell(L) : 0 \leq \ell \leq j\})$ such that switching $L$ on $\eta_L(r_j, x, c_j)$ yields a square in $Y_j$. Thus, the total number of switches from $X_j$ to $Y_j$ is at least
		\begin{equation}\label{e:XjLB}
			|X_j|(n-2|\cR_P|-|\cC_P|-3j-3) \geq |X_j|(n(1-2\alpha-\beta)-3j-3).
		\end{equation}
		
		Now let $L' \in Y_j$, let $r_j = r_j(L')$, let $c_j = c_j(L')$, and let $r_{j+1} = r_{j+1}(L')$. If $L'$ can be obtained from some $L \in X_j$ by switching on $\eta_L(r_j(L), r'', c_j(L))$ for some $r'' \in [n] \setminus (\cR_P \cup \{r_\ell(L) : 0 \leq \ell \leq j\})$, then $r_j(L) = r_j$, $c_j(L) = c_j$, and $r'' = r_{j+1}$. Let $\eta = \eta_{L'}(r_j, r_{j+1}, c_j)$. By \cref{l:eta_reverse}, there are at most $n$ possibilities for the Latin square $L$ if $\eta$ is of Type One and there is at most one possibility for $L$ otherwise. The set of squares $L'' \in Y_j$ such that $\eta_{L''}(r_j(L''), r_{j+1}(L''), c_j(L''))$ is of Type One is exactly $X_{j+1}$. 
		Therefore, the total number of switches from squares in $X_j$ to those in $Y_j$ is at most
		\begin{equation}\label{e:YjUB}
			n|X_{j+1}|+|Y_j \setminus X_{j+1}| \leq |Y_j|(np_j+1), 
		\end{equation}
        and combining this with \cref{e:XjLB}, we get that 
        \[\frac{|Y_j|}{|X_j|}\geq \frac{n(1-2\alpha-\beta)-3j-3}{np_j+1}.\]
		Finally then, we have
		\[
		p_{j-1} = \frac{|X_j|}{|Y_{j-1}|} \leq \frac{|X_j|}{|X_j|+|Y_j|} \leq \frac1{1+\frac{n(1-2\alpha-\beta)-3j-3}{np_j+1}} = \frac{np_j+1}{n(1+p_j-2\alpha-\beta)-3j-2},
		\]
		proving the claim.
	\end{proofclaim}
	
	Suppose that, for some $j \geq 1$, we have an upper bound $p_j \leq g(n, P)$ for some function $g$. We may want to use this in conjunction with \cref{cl:pjrecurrence} to conclude that 
	\[
	p_{j-1} \leq \frac{ng(n, P)+1}{n(1+g(n, P)-2\alpha-\beta)-3j-2}.
	\]
	We could then iterate this process to obtain an upper bound on $p_0$. In order to do this, we need to know that the function $\varphi : [0,1] \to \mathbb{R}$ defined by
	\[
	\varphi(x) = \frac{nx+1}{n(1+x-2\alpha-\beta)-3j-2}
	\]
	is non-decreasing in $x$ for sufficiently large $n$. Let $K = n(1-2\alpha-\beta)-3j-2$. The derivative $\varphi'$ of $\varphi$ is defined by
	\[
	\varphi'(x) = \frac{n(K-1)}{(nx+K)^2}.
	\]
	Therefore, $\varphi$ is non-decreasing in $x$ whenever $K \geq 1$. In particular, $\varphi$ is non-decreasing in $x$ if $j = o(n)$ and $n$ is sufficiently large.
	A tempting way to proceed is to use the described strategy to obtain an upper bound on $p_0$ with some $j = o(n)$ and the trivial upper bound $p_j \leq 1$. It turns out that the best possible upper bound that can be proved using this approach is $p_0 = O(n^{-1/2})$, which is achieved by taking $j = O(n^{1/2})$. Thus, other ideas are needed in combination with \cref{cl:pjrecurrence} to prove the theorem. 
    
    We will prove that $p_2 = O(n^{-1})$. We will then combine this with \cref{cl:pjrecurrence} to conclude that $p_0 = O(n^{-1})$. We prove that $p_2=O(n^{-1})$ in \cref{cl:p2UB}. In that proof, we use a combination of row cycle switching and column cycle switching. The next two claims we need in order to prove a lower bound on the number of switches that we can apply in the proof of \cref{cl:p2UB}.
	
	For a Latin square $L \in Y_2$, let $Q(L)$ denote the set of non-empty cells of $P$ and the cells in
	\[
	\{(r_\ell(L), c_\ell(L)), (r_{\ell+1}(L), c_\ell(L)), (r_\ell(L), c_{\ell+1}(L)) : 0 \leq \ell \leq 2\} \cup\{(r_3,c_3)\}.
	\]
     See Figure \ref{fig:intercalate chase} for the set $Q(L)$ in a square $L$ in which $P=\{(r,c,s_0)\}=\{(r_0,c_0,s_0)\}$. 
	Also let $\gamma_L = \gamma_L(c_2(L), c_3(L), r_1(L))$. Let $G_L$ be the set of rows in $[n] \setminus (\cR_P \cup \{r_0(L), r_1(L)\})$ that are hit by $\gamma_L$ and let $\nu = \nu(L)$ be the integer such that $|G_L|=n-\nu$. Note that if $L \in X_3$, then $G_L \cap \{r_2(L), r_3(L)\} = \emptyset$, since $\gamma_L(c_2(L), c_3(L), r_2(L))$ forms an intercalate with rows $r_2(L)$, $r_3(L)$ and column $c_2(L)$, $c_3(L)$. Therefore, $\nu \in [|\cR_P|+N, n]$, where
    \[
    N = \begin{cases}
        3 & \text{if } r \in \cR_P,\\
        4 & \text{otherwise}.
    \end{cases}
    \]
    Here, the $N$ is accounting for $r_1(L), r_2(L), r_3(L)\notin \cR_P$ and possibly $r_0(L) \notin \cR_P$.
    For $k \in [|\cR_P|+N, n]$, let $A_k$ be the set of squares $L' \in X_3$ with $\nu(L')=k$ and let $a_k = |A_k|$. 
	
	Let $k \in [|\cR_P|+N, n]$ and let $L \in A_k$. For $i \in \{0, 1, 2, 3\}$, let $r_i=r_i(L)$, let $c_i=c_i(L)$, and let $s_i=s_i(L)$. Write the cycle of $\pi_{c_2, c_3}^\gamma$ containing $s_0 = L[r_1, c_2]$ as $(z_1'=s_0, z_2', \ldots, z_m')$. For $i \in [m]$, let $x_i' = x_i'(L)$ be the row such that $L[x_i', c_2] = z_i'$ (and so $x_1'=r_1$). For $i \in [n-k]$, let $x_i = x_i(L)$ be the $i$-th element of the sequence $(x_1', x_2', \ldots, x_m')$ that is not a member of $\cR_P \cup \{r_0(L), r_1(L)\}$. 
	
	Let $T = T_k$ be the set of pairs $(i, j) \in [n-k]^2$ with $i < j$. For $(i, j) \in T$ and $L \in A_k$, we call the triple $(L, i, j)$ \emph{good} if $\rho_L(x_i(L), x_j(L), c_3(L))$ does not hit column $c_2(L)$, otherwise we call the triple \emph{bad}. Let $A_k^{\textnormal{g}}$ be the set of good triples $(L, i, j) \in A_k \times T$ and let $A_k^{\textnormal{b}}$ be the set of bad triples $(L, i, j) \in A_k \times T$. In our next claim, we lower bound the number of good triples in $A_k \times T$.
	
	\begin{claim}\label{cl:Akg}
		For $k \in [|\cR_P|+N, n]$,
		\[
		|A_k^{\textnormal{g}}| \geq \frac{a_k(n-k)(n-k-N)}{9}.
		\]
	\end{claim}
	\begin{proofclaim}
        The claim is trivial if $k \in [n-N,n]$, so we will assume for this proof that $k \in [|\cR_P|+N, n-N-1]$.
		Let $T = T_k$ and note that $T \neq \emptyset$ since $n-k > 2$. For $(i, j) \in T$, define $W_k(i, j)$ to be the set of pairs $(u, v) \in T$ such that $|\{u, v\} \cap [i+1, j-1]|=1$ and $\{u, v\} \cap \{i, j\} = \emptyset$. Let $w_k(i, j) = |W_k(i, j)|$ and note that
		\begin{equation}\label{e:w}
		w_k(i, j) = (j-i-1)(n-k-j+i-1).
		\end{equation}
		Let $w_{\textnormal{max}}$ be the maximum value of $w_k(i, j)$ over all $(i, j) \in T$ and note that
		\begin{equation}\label{e:w_max}
			w_{\textnormal{max}} = \left\lfloor \frac{(n-k-2)^2}4\right\rfloor,
		\end{equation}
	achieved when $j-i \in \{\lfloor (n-k)/2 \rfloor, \lceil (n-k)/2 \rceil\}$.
	
	We will prove the claim using cross-switching. We consider switching from triples in $A_k^{\textnormal{b}}$ to triples in $A_k^{\textnormal{g}}$.
	So let $(L, i_0, j_0) \in A_k^{\textnormal{b}}$. For each $(u, v) \in W_k(i_0, j_0)$, we associate a triple $\textnormal{sw}(L, i_0, j_0, u, v) \in A_k^{\textnormal{g}}$ as follows. If $(L, u, v) \in A_k^{\textnormal{g}}$, then set $\textnormal{sw}(L, i_0, j_0, u, v) = (L, u, v)$. In this case, we say that $(L, u, v)$ has been obtained from $(L, i, j, u, v)$ by a switch of Kind $1$. Now assume that $(L, u, v) \in A_k^{\textnormal{b}}$. For  $i \in \{0, 1, 2, 3\}$, let $r_i = r_i(L)$ and let $c_i = c_i(L)$. For $i \in [n-k]$, let $x_i = x_i(L)$. Let $m$ be the length of $\gamma_L$ and for $i \in [m]$, let $x_i' = x_i'(L)$. Let $L'$ be the square obtained from $L$ by cross-switching on $\zeta = \zeta_L(c_2, c_3, x_u, x_v)$. Note that $\zeta$ is well-defined since $(L, u, v)$ is a bad triple. We next claim that the cells of the entries in $\zeta$ do not intersect $Q(L)$. The only entries in $\zeta$ that do not occur in column $c_2$ or $c_3$ lie in row $x_u$ or $x_v$, both of which are members of $G_L$. 
    By definition, $G_L \cap (\cR_P \cup \{r_0, r_1\}) = \emptyset$ and, as observed before this claim, $G_L \cap \{r_2, r_3\} = \emptyset$. Hence, no cell of an entry in $\zeta$ that is not in column $c_2$ or $c_3$ is a member of $Q(L)$. The only cells in $Q(L)$ with column $c_2$ or $c_3$ are those in $\{(r_1, c_2), (r_2, c_2), (r_2, c_3), (r_3, c_2), (r_3, c_3)\}$. Since $\gamma_L$ does not hit rows $r_2$ or $r_3$, it follows that no entry in $\zeta$ lies in any cell in $\{(r_2, c_2), (r_2, c_3), (r_3, c_2), (r_3, c_3)\}$. 
    Finally, note that the set of rows hit by $\zeta$ is of the form $\{x_\ell' : u_0 \leq \ell \leq v_0\}$ for some $(u_0, v_0) \in T_m$ where $x_{u_0}' = x_u$ and $x_{v_0}' = x_v$. Since $x_1' = r_1 \notin G_L$, it follows that $u_0 > 1$ and so $\zeta$ does not hit cell $(r_1, c_2)$. We have thus proven that the cells of the entries in $\zeta$ do not intersect $Q(L)$. 
    Hence, $L' \in X_3$. Furthermore, \cref{l:cross_switch} implies that $L' \in A_k$. Let $(i', j') \in [n-k]^2$ be such that $x_{i'}(L') = x_{i_0}(L)$ and $x_{j'}(L') = x_{j_0}(L)$. 
    Note that since $i_0 < j_0$ and $|\{u, v\} \cap [i_0+1, j_0-1]|=1$, it follows   from Remark \ref{rem:split}  that $i' < j'$ and $(u, v) \in W_k(i', j')$ also. 
    By \cref{l:cross_switch} \eqref{item:split}, $(L', i', j') \in A_k^{\textnormal{g}}$ and so we set $\textnormal{sw}(L, i_0, j_0, u, v)=(L', i', j')$. In this case, we say that $(L', i', j')$ has been obtained from $(L, i_0, j_0, u, v)$ by a switch of Kind $2$.
	
	Now let $(L', i', j') \in A_k^{\textnormal{g}}$. We want to determine the number of quintuples $(L, i, j, u, v)$ where $(L, i, j) \in A_k^{\textnormal{b}}$, $(u, v) \in W_k(i, j)$, and $(L', i', j') = \textnormal{sw}(L, i, j, u, v)$. First suppose that $(L', i', j')$ has been obtained from such a $(L, i, j, u, v)$ by a switch of Kind $1$. Then $L = L'$, $(u, v) = (i', j')$, and $(i, j) \in T$ is such that $(i', j') \in W_k(i, j)$, which is equivalent to $(i, j) \in W_k(i', j')$. 
    Thus, there are at most $w_k(i', j')$ choices for $(L, i, j, u, v)$ in this case. 
    Now suppose that $(L', i', j')$ has been obtained from some $(L, i, j, u, v)$ by a switch of Kind $2$. As previously observed, by Remark \ref{rem:split}, we must have $(u,v)\in W_k(i',j')$. 
 Fix some $(u, v) \in W_k(i', j')$ and suppose that $(L, i, j) \in A_k^{\textnormal{b}}$  is such that $(L', i', j') = \textnormal{sw}(L, i, j, u, v)$. By \cref{l:cross_reverse} and Remark \ref{rem:split}, we know that $L$ is the square obtained from $L'$ by switching on $\zeta_{L'}(c_2(L), c_3(L), x_{u}(L), x_{v}(L))$. Moreover, the positions $i,j$ can be recovered by considering the positions of the rows $x_{i'}(L')$ and $x_{j'}(L')$ after performing the cross-switch. That is, the entire quintuple $(L, i, j, u, v)$ is uniquely determined from the pair $(u, v) \in W_k(i', j')$. It follows that the total number of choices for $(L, i, j, u, v)$ in this case is at most $w_k(i', j')$. Hence, the total number of such quintuples in general is at most $2w_k(i', j')$. 
	
	By double counting, we obtain the inequality
	\begin{equation}\label{e:AkgvsAkb}
	\sum_{(L, i, j) \in A_k^{\textnormal{b}}} w_k(i, j) \leq 2\sum_{(L', i', j') \in A_k^{\textnormal{g}}} w_k(i', j'), 
	\end{equation}
    and so 
\begin{equation}\label{e:AkgvsAkb2}
	\sum_{(L, i, j) \in A_k^{\textnormal{b}}} w_k(i, j) +\sum_{(L', i', j') \in A_k^{\textnormal{g}}} w_k(i', j') \leq 3\sum_{(L', i', j') \in A_k^{\textnormal{g}}} w_k(i', j'). 
	\end{equation}

	We also know that
	\begin{align*}
		\sum_{(L, i, j) \in A_k^{\textnormal{b}}} w_k(i, j) + \sum_{(L', i', j') \in A_k^{\textnormal{g}}} w_k(i', j') &= \sum_{L \in A_k} \sum_{(i, j) \in T} w_k(i, j)\\
		&= a_k\sum_{(i, j) \in T} (j-i-1)(n-k-j+i-1)\\
		&= \frac{a_k(n-k)(n-k-1)(n-k-2)(n-k-3)}{12},
	\end{align*}
where we have used \cref{e:w}. Combining with \cref{e:AkgvsAkb2}, we obtain
\begin{equation*}\label{e:Akg_prelim}
\frac{a_k(n-k)(n-k-1)(n-k-2)(n-k-3)}{36} \leq \sum_{(L', i', j') \in A_k^{\textnormal{g}}} w_k(i', j') \leq |A_k^\textnormal{g}|w_{\textnormal{max}} \leq |A_k^\textnormal{g}|\frac{(n-k-2)^2}4,
\end{equation*}
from \cref{e:w_max}. Therefore,
\[
|A_k^{\textnormal{g}}| \geq \frac{a_k(n-k)(n-k-1)(n-k-3)}{9(n-k-2)} \geq \frac{a_k(n-k)(n-k-N)}{9}, 
\]
as required.
	\end{proofclaim}

Next, define
\begin{equation}\label{e:Sigma_1}
	\Sigma_1 = \sum_{k=|\cR_P|+N}^{n} (k-|\cR_P|-N)a_k.
\end{equation}
Our next claim gives a lower bound on $\Sigma_1$.

\begin{claim}\label{cl:Sigma_1}
	\[
	\Sigma_1 \geq \frac{|X_3|n(1-\alpha)}{10}(1-o(1)).
	\]
\end{claim}
\begin{proofclaim}
	We prove the claim by double counting the number of ways there are to switch a Latin square $L \in X_3$ on a row cycle of the form $\rho_L(x_i(L), x_j(L), c_3(L))$ to obtain a square $L' \in X_3$ for which $\nu(L') > \nu(L)$.
	
	Let $L \in A_k$ for some $k \in [|\cR_P|+N, n]$, let $T = T_k$, and let $(i_0, j_0) \in T$ be such that $(L, i_0, j_0) \in A_k^{\textnormal{g}}$. Then by definition of $A_k^{\textnormal{g}}$ and \cref{l:row_cyc_switch} \eqref{i:col both intersect}, the Latin square obtained from $L$ by switching on $\rho_L(x_{i_0}(L), x_{j_0}(L), c_3(L))$ is a member of $A_{k'}$ where $k' = k+j-i > k$. So the number of switches from some square $L \in A_k$ to some $L' \in X_3$ with $\nu(L') > \nu(L)$ is at least $|A_k^g|$. Thus, the total number of switches from some square $L \in X_3$ to some $L' \in X_3$ with $\nu(L') > \nu(L)$ is at least
	\begin{equation}\label{e:LB}
		\sum_{k=|\cR_P|+N}^{n}|A_k^{\textnormal{g}}| \geq \sum_{k=|\cR_P|+N}^{n} \left(\frac{a_k(n-k)(n-k-N)}{9}\right),
	\end{equation}
by \cref{cl:Akg}.

	Now let $L' \in A_{k'}$ for some $k' \in [|\cR_P|+N, n]$ and suppose that $L'$ has been obtained from some $L \in A_k$ for some $k < k'$ by switching on $\rho = \rho_L(x_{i_0}(L), x_{j_0}(L), c_3(L))$ for some $(i_0, j_0) \in T_k$. For  $i \in [n-k]$, let $x_i = x_i(L)$ and for $i \in \{0, 1, 2, 3\}$, let $r_i = r_i(L)$ and let $c_i = c_i(L)$.
    Observe that $\{x_{i_0}, x_{j_0}\} \cap (\cR_P \cup \{r_0, r_1, r_2, r_3\}) = \emptyset$ and so no entry of $\rho$ has a cell in $Q(L)$. Thus, $c_3(L') = c_3$. Also observe from \cref{f:row_cyc_reverse} that $L$ is equal to the square obtained from $L'$ by switching on $\rho_{L'}(x_{i_0}, x_{j_0}, c_3(L'))$. Hence, the pair $(x_{i_0}, x_{j_0})$ uniquely determines $L$. From \cref{l:row_cyc_switch} \eqref{i:col one intersect}, we know that exactly one of $x_{i_0}$ and $x_{j_0}$ is an element of $G_{L'}$ and exactly one is an element of $[n] \setminus (\cR_P \cup G_{L'} \cup \{r_0(L'), r_1(L'), r_2(L'), r_3(L')\})$. It follows that there are at most $(n-k')(k'-|\cR_P|-N)$ choices for $(x_{i_0}, x_{j_0})$ and the same upper bound applies for the number of choices of $L$. Hence, the number of switches from some square $L \in X_3$ to some $L' \in X_3$ with $\nu(L') > \nu(L)$ is at most
	\begin{equation}\label{e:UB}
		\sum_{k'=|\cR_P|+N}^{n} a_{k'}(n-k')(k'-|\cR_P|-N).
	\end{equation}

	Combining \cref{e:LB} and \cref{e:UB}, we obtain
	\begin{align}\label{e:sumLB}
	0 &\leq \sum_{k=|\cR_P|+N}^{n} a_k(n-k)\left(9(k-|\cR_P|-N)-(n-k-N)\right)\nonumber \\
	&= \sum_{k=|\cR_P|+N}^{n} a_k \left((n-|\cR_P|-N)-(k-|\cR_P|-N)\right)\left(10(k-|\cR_P|-N)-(n-|\cR_P|-N)+N\right) \nonumber \\ 
    &=(11(n-|\cR_P|-N)-N)\Sigma_1-10\Sigma_2-(n-|\cR_P|-N)(n-|\cR_P|-2N)|X_3|,
	\end{align}
	where
	\[
	\Sigma_2=\sum_{k=|\cR_P|+N}^{n} (k-|\cR_P|-N)^2a_k.
	\]
	Cauchy-Schwarz thus implies that
	\begin{equation}\label{e:Sigma_2LB}
	\Sigma_2 \geq \frac{\Sigma_1^2}{|X_3|}.
	\end{equation}
	Multiplying \cref{e:sumLB} by $|X_3|$ and applying  \cref{e:Sigma_2LB} gives the inequality
	\begin{align*}
		0&\geq 10\Sigma_1^2-(11(n-|\cR_P|-N)-N)|X_3|\Sigma_1+(n-|\cR_P|-N)(n-|\cR_P|-2N)|X_3|^2
        \\ &=\left(\Sigma_1-(n-|\cR_P|-N)|X_3|\right)\left(10\Sigma_1-(n-|\cR_P|-2N)|X_3|\right).
	\end{align*}
	As the first factor is negative (using that $k\leq n$) we have that the second factor must be positive which implies that
	\[
	\Sigma_1 \geq \frac{|X_3|(n-|\cR_P|-2N)}{10} \geq \frac{|X_3|n(1-\alpha)}{10}(1-o(1)),
	\]
	as required.
\end{proofclaim}
	
	We are now ready to prove that $p_2=O(n^{-1})$.
	
	\begin{claim}\label{cl:p2UB}
		\[
		p_2 \leq \frac{20}{n(1-\alpha)}(1+o(1)).
		\]
	\end{claim}
	\begin{proofclaim}
		We prove the claim by using a combination of row cycle switching and column cycle switching, and we consider switching from squares in $X_3$ to squares in $Y_2 \setminus X_3$. 
		
		Let $L \in X_3$ and for $i \in \{0, 1, 2, 3\}$, let $r_i = r_i(L)$, $c_i = c_i(L)$, and $s_i = s_i(L)$. Let $k \in [|\cR_P|+N, n]$ be such that $L \in A_k$ and let $x \in [n] \setminus (\cR_P \cup G_L \cup \{r_0, r_1, r_2, r_3\})$. Note that there are $k-|\cR_P|-N$ choices for $x$. Let $\rho = \rho_L(r_3, x, c_3)$. If $\rho$ does not hit column $c_2$, then let $L'$ be the Latin square obtained from $L$ by switching on $\rho$. If $\rho$ does hit column $c_2$, then let $L''$ be the Latin square obtained from $L$ by switching on $\gamma_L(c_2, c_3, x)$ and let $L'$ be the Latin square obtained from $L''$ by switching on $\rho_{L''}(r_3, x, c_3)$. We claim that either way, $L' \in Y_2 \setminus X_3$. 
		
		First consider when $\rho$ does not hit column $c_2$. Since $x \not\in \cR_P \cup \{r_0, r_1, r_2, r_3\}$, it follows that $\rho$ does not hit a cell in $Q(L)$. Therefore, $L' \in Y_2$. Furthermore, $r_i(L') = r_i$, $c_i(L') = c_i$, and $s_i(L') = s_i$ for all $i \in \{0, 1, 2, 3\}$ except that $s_3(L') = L[x, c_3] \neq L[r_3, c_3] = s_2 = s_2(L')$. It follows that $L' \in Y_2 \setminus X_3$. 
		
		Now suppose that $\rho$ does hit column $c_2$. Since $x \notin \cR_P \cup G_L \cup \{r_0, r_1\}$, it follows that $\gamma_L(c_2, c_3, x) \neq \gamma_L$ and since $x \not\in \{r_2, r_3\}$, it follows that $\gamma_L(c_2, c_3, x) \neq \gamma_L(c_2, c_3, r_2)$ (recalling that $\gamma_L(c_2, c_3, r_2)$ is an intercalate). As we also have that $\{c_2, c_3\} \cap \cC_P = \emptyset$ due to how we chose $c_2$ and $c_3$ in our defining procedure at the start of the proof, it follows that $\gamma_L(c_2, c_3, x)$ does not hit any cell in $Q(L) \cup \{(r_3, c_3)\}$. Therefore $L'' \in Y_2$, $r_i(L'') = r_i$, $c_i(L'') = c_i$, and $s_i(L'') = s_i$ for all $i \in \{0, 1, 2, 3\}$. Thus, $L'' \in X_3$. Also, \cref{l:col_cyc_switch} \eqref{i:row both intersect} implies that $\rho_{L''}(r_3, x, c_3)$ does not hit column $c_2$. Hence we can now argue as in the previous case to conclude that $L' \in Y_2 \setminus X_3$.
		
		Since there were $k-|\cR_P|-N$ choices for $x$, it follows that the total number of switches from $X_3$ to $Y_2 \setminus X_3$ is at least 
		\begin{equation}\label{e:Y2minusX3LB}
		\sum_{k=|\cR_P|+N}^{n} a_k(k-|\cR_P|-N) = \Sigma_1 \geq \frac{|X_3|n(1-\alpha)}{10}(1-o(1)),
		\end{equation}
	by \cref{cl:Sigma_1}.
		
		Now let $L' \in Y_2 \setminus X_3$ and suppose that $L'$ is obtained from some $L \in X_3$ by:
		\begin{enumerate}[(i)]
			\item switching on $\rho_L(r_3(L), x, c_3(L))$ for some $x \in [n] \setminus (G_L \cup \cR_P \cup \{r_0(L), r_1(L), r_2(L), r_3(L)\})$ such that $\rho_L(r_3(L), x, c_3(L))$ does not hit column $c_2(L)$, or
			\item switching on $\gamma_L(c_2(L), c_3(L), x)$ for some $x \in [n] \setminus (G_L \cup \cR_P \cup \{r_0(L), r_1(L), r_2(L), r_3(L)\})$ such that $\rho_L(r_3(L), x, c_3(L))$ does hit column $c_2(L)$ and then switching the resulting Latin square $L''$ on $\rho_{L''}(r_3(L''), x, c_3(L''))$. In this case, as we have seen, $L'' \in X_3$ and $\rho_{L''}(r_3(L''), x, c_3(L''))$ does not hit column $c_2(L'')$. 
		\end{enumerate} 
		Let $r''$ be the row such that $L'[r'', c_3(L')] = s_2(L')$ and note that $r'' \neq r_3(L')$ since $L' \in Y_2 \setminus X_3$. 
		
		First suppose that (i) holds. By \cref{f:row_cyc_reverse}, $L'$ is the Latin square obtained from $L$ by switching on $\rho_{L'}(r_3(L), x, c_3(L))$. Since $x \notin \cR_P \cup \{r_0(L), r_1(L), r_2(L), r_3(L)\}$ and $\rho_L(r_3(L), x, c_3(L))$ does not hit column $c_2(L)$, it follows that $\rho_L$ does not hit any cell in $Q(L)$. Therefore, $r_i(L') = r_i(L)$, $c_i(L') = c_i(L)$, and $s_i(L') = s_i(L)$ for all $i \in \{0, 1, 2, 3\}$ except that $s_3(L') = L[x, c_3(L)] \neq L[r_3(L), c_3(L)] = s_3(L)$. Furthermore, $L'[x, c_3(L)] = L[r_3(L), c_3(L)] = s_2(L) = s_2(L')$. Hence $x = r''$ and thus $L$ is uniquely determined.
		
		Now suppose that (ii) holds. Using the same arguments as above, $L''$ is uniquely determined: it is the Latin square obtained from $L'$ by switching on $\rho_{L'}(r_3(L'), r'', c_3(L'))$. Furthermore  $r_i(L'') = r_i(L')$, $c_i(L'') = c_i(L')$, and $s_i(L'') = s_i(L')$ for all $i \in \{0, 1, 2, 3\}$ except that $s_3(L'') \neq s_3(L')$.
        Since $x=r''\not\in G_L \cup \cR_P \cup \{r_0(L), r_1(L), r_2(L), r_3(L)\}$ and $\{c_2(L), c_3(L)\} \cap \cC_P = \emptyset$, it follows that $\gamma_L(c_2(L), c_3(L), x)$ does not hit any cell in $Q(L) \cup \{(r_3(L), c_3(L))\}$ and so $r_i(L) = r_i(L'') = r_i(L')$, $c_i(L) = c_i(L'') = c_i(L')$, and $s_i(L) = s_i(L'') = s_i(L')$ for all $i \in \{0, 1, 2, 3\}$. Therefore, by \cref{f:col_cyc_reverse}, $L$ is uniquely determined: it is the square obtained from $L''$ by switching on $\gamma_{L''}(c_2(L'), c_3(L'), r'')$. 
		
		It follows that there are at most $2$ possibilities for the square $L \in X_3$. Thus, the number of switches from $X_3$ to $Y_2 \setminus X_3$ is at most
		\begin{equation}\label{e:Y2minusX3UB}
			2|Y_2 \setminus X_3|.
		\end{equation}
	
	By combining \cref{e:Y2minusX3LB} and \cref{e:Y2minusX3UB}, we obtain
	\[
	p_2 = \frac{|X_3|}{|Y_2|} = \frac{|X_3|}{|X_3|+|Y_2 \setminus X_3|} \leq \frac1{1+\frac{n(1-\alpha)(1-o(1))}{20}} = \frac{20}{n(1-\alpha)}(1+o(1)),
	\]
	as required.
	\end{proofclaim}

	By combining \cref{cl:p2UB} and \cref{cl:pjrecurrence}, we obtain
	\begin{equation}\label{e:D_potential}
	p_0 \leq \frac{22+2\alpha^2-4\alpha+\alpha\beta-\beta}{n(1-\alpha)(1-2\alpha-\beta)^2}(1+o(1)),
	\end{equation}
	which proves the theorem.
\end{proof}

\section{Proof of main result}\label{s:main}

In this section, we prove \cref{t:main}. The theorem will follow easily by combining \cref{l:UB} and \cref{l:LB} below. \cref{l:UB} allows us to deal with the upper bound in \cref{t:main} and \cref{l:LB} allows us to deal with the lower bound.

\begin{lem}\label{l:UB}
	Let $\bL$ be a random Latin square of order $n$ and let $P$ be a partial Latin square of order $n$. Let $\mathscr{R} \subseteq [n]$ be a set of rows, let $c \in [n]$ be a column, and let $s \in [n]$ be a symbol such that $(r, c, s) \notin P$ for all $r \in \mathscr{R}$. Let $\alpha, \alpha', \beta > 0$ be such that $|\cR_P|+1 \leq \alpha n$, $|\cR_{P} \cup \mathscr{R}| \leq \alpha' n$, and $|\cC_P| \leq \beta n$ and suppose that $\alpha+\alpha'+\beta<1$.
    Then
	\[
	\Pr(\exists r \in \mathscr{R} : \bL[r, c]=s| \bL \supseteq P) \leq \frac{|\mathscr{R}|(D+1)}{n(1-\alpha-\alpha'-\beta)+|\mathscr{R}|(D+1)}
	\]
    where $D = D(\alpha, \beta)$ from \cref{t:interc}. In particular, when $\mathscr{R}=\{r\}$,
    \[
    \Pr(\bL[r, c]=s | \bL \supseteq P) \leq \frac\Delta n
    \]
    for some constant $\Delta = \Delta(\alpha, \beta)$ that satisfies $\Delta \to 23$ as $\alpha, \beta \to 0$.
\end{lem}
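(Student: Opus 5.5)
We will prove the lemma by a switching argument with $\eta$-switches, where Theorem~\ref{t:interc} controls the reverse count.

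\textbf{Setup.} Let $Y$ be the set of Latin squares containing $P$. Let $B\subseteq Y$ consist of those squares with $L[r,c]=s$ for some $r\in\mathscr{R}$, and put $A=Y\setminus B$. Since $P$ is a partial Latin square, any such $r$ is unique. We will bound $|B|/|A|$ from above; the stated bound then follows from $\Pr(B\mid Y)=|B|/(|A|+|B|)$ and monotonicity of $x\mapsto x/(1+x)$.

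\textbf{Forward switches ($B\to A$).} Fix $L\in B$ with $L[r,c]=s$, $r\in\mathscr{R}$. For each row $x\notin\cR_P\cup\mathscr{R}$, consider $\eta_L(r,x,c)$. It changes the cells $(r,c)$, $(x,c)$ and $(r,c')$, where $L[r,c']=L[x,c]$, and otherwise only rows $x$ and $x''$, where $L[x'',c']=s$.

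We need the switch to avoid $P$. We require $c'\notin\cC_P$, which excludes at most $|\cC_P|$ values of $x$, and $x''\notin\cR_P$, which excludes at most $|\cR_P|+1$ values. Note that $(r,c)\notin P$ by hypothesis, and $(r,c')\notin P$ since $c'\notin\cC_P$.

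After the switch, $L'[r,c]=L[x,c]\ne s$ and $L'[x,c]=s$ with $x\notin\mathscr{R}$. We also need that no other row of $\mathscr{R}$ now carries $s$ in column $c$; this holds because column $c$ changed only in rows $r$ and $x$. Hence $L'\in A$.

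The number of admissible $x$ is at least
\[
n-|\cR_P\cup\mathscr{R}|-|\cR_P|-1-|\cC_P|\ge n(1-\alpha-\alpha'-\beta).
\]
So the number of forward switches is at least $|B|\,n(1-\alpha-\alpha'-\beta)$.

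\textbf{Reverse count.} Fix $L'\in A$ obtained this way. The row $x$ is determined as the row with $L'[x,c]=s$. The row $r\in\mathscr{R}$ has at most $|\mathscr{R}|$ choices. Given $(r,x)$, Lemma~\ref{l:eta_reverse} gives at most $n$ preimages if $\eta_{L'}(r,x,c)$ is of Type One, and at most one otherwise.

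The Type One event is exactly where Theorem~\ref{t:interc} enters. Since $x\notin\cR_P$, apply it to $P':=P\cup\{(x,c,s)\}$; this is the main point to check. We have $|\cR_{P'}|\le|\cR_P|+1\le\alpha n$, which is why the hypothesis reads $|\cR_P|+1\le\alpha n$. The column set is $\cC_P\cup\{c\}$; the single extra column only perturbs $\beta$ by $1/n$, which we absorb into the constant via $D(\alpha,\beta)$ with a slightly adjusted $\beta$, or by noting that the proof of Theorem~\ref{t:interc} is insensitive to $O(1)$ additions. Applying the theorem with row $r$, row $r'=x\notin\cR_{P'}$ and column $c$ shows that, among squares containing $P'$, at most a $D/n$ fraction have $\eta(r,x,c)$ of Type One.

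Summing over $x\notin\cR_P\cup\mathscr{R}$, and using that the squares in $A$ with $L'[x,c]=s$ partition $A$ by $x$, the number of reverse switches is at most
\[
|\mathscr{R}|\sum_x |A_x|\Bigl(1+n\cdot\tfrac{D}{n}\Bigr)\le |\mathscr{R}|(D+1)|A|.
\]
A technicality: $A_x$ is conditioned on containing $P'$ but also on avoiding $s$ in $\mathscr{R}$. This is automatic here, since $s$ sits at row $x\notin\mathscr{R}$ of column $c$, so $A_x$ is exactly the set of squares containing $P'$.

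\textbf{Conclusion.} Combining the two counts gives
\[
\frac{|B|}{|A|}\le\frac{|\mathscr{R}|(D+1)}{n(1-\alpha-\alpha'-\beta)},
\]
and hence the first claimed bound. For $\mathscr{R}=\{r\}$, take $\alpha'=\alpha$. This yields the bound $(D+1)/\bigl(n(1-2\alpha-\beta)\bigr)$, which is of the form $\Delta/n$ with $\Delta\to 23$ because $D\to22$.

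The step I expect to be most delicate is the reverse count, specifically justifying the application of Theorem~\ref{t:interc} to the augmented $P'$ with the shifted parameters.
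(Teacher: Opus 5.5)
Your proof is correct and is essentially the paper's argument. You $\eta$-switch from squares with $s$ in column $c$ at a row of $\mathscr{R}$ to squares without, get at least $n(1-\alpha-\alpha'-\beta)$ forward switches per square, and control the reverse count via \cref{l:eta_reverse} and \cref{t:interc} after conditioning on the row holding $s$ in column $c$. The paper additionally fixes the symbol at $(r,c)$ in this conditioning, which your version shows is unnecessary.

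One correction at the step you flagged: $x\in\cR_{P'}$ because $(x,c,s)\in P'$, so \cref{t:interc} does not apply verbatim (the paper glosses over the same point). Its proof nevertheless goes through unchanged, since $(x,c)=(r_1,c_0)$ is one of the chain cells that every switching there leaves untouched, and $r_1,c_0$ are already excluded in all its counts; this even gives $D(\alpha,\beta)$ without perturbing $\beta$.
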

\begin{proof}
	Let $X$ be the set of Latin squares of order $n$ containing $P$ and let $Y \subseteq X$ be the set of squares $L \in X$ containing $P$ such that $L[r, c]=s$ for some $r \in \mathscr{R}$. We prove the lemma using $\eta$-switching, where we consider switching from squares in $Y$ to squares in $X \setminus Y$.
	
	Let $L \in Y$ and let $r \in \mathscr{R}$ be such that $L[r, c]=s$. Let $r_1 \in [n] \setminus (\cR_P \cup \mathscr{R})$. Let $c_1$ be the column such that $L[r, c_1] = L[r_1, c]$ and let $r_2$ be the row such that $L[r_2, c_1] = s$. There are at most $|\cC_P|$ choices of $r_1$ such that $c_1 \in \cC_P$ and there are at most $|\cR_P|$ choices of $r_1$ such that $r_2 \in \cR_P$. Thus, there are at least $n-|\cR_P \cup \mathscr{R}|-|\cR_P|-|\cC_P|$ choices of $r_1 \notin \mathscr{R}$ such that $\eta_L(r, r_1, c) \cap P = \emptyset$. For any such $r_1$, switching $L$ on $\eta_L(r, r_1, c)$ yields a square $L' \in X$. Furthermore, $L' \in X \setminus Y$, since $L'[r_1, c] = L[r, c] = s$ and $r_1 \notin \mathscr{R}$. Hence, the total number of switches from $Y$ to $X \setminus Y$ is at least
	\begin{equation}\label{e:YLB}
		|Y|(n-|\cR_P \cup \mathscr{R}|-|\cR_P|-|\cC_P|) \geq |Y|n(1-\alpha-\alpha'-\beta).
	\end{equation}
	
	For $L' \in X \setminus Y$, let $r'(L')$ be the row such that $L'[r'(L'), c] = s$. For $r \in \mathscr{R}$, define $X'[r] \subseteq X \setminus Y$ to be the set of squares $L' \in X \setminus Y$ such that $\eta_{L'}(r, r'(L'), c)$ is of Type One and define $X''[r] = (X \setminus Y) \setminus X'[r]$. Let $r \in \mathscr{R}$ and let $L' \in X \setminus Y$. If $L \in Y$ is such that $L'$ is obtained from $L$ by switching on $\eta_L(r, r_1, c)$ for some row $r_1$, then $r_1 = r'(L')$.
	By \cref{l:eta_reverse}, if $L' \in X'[r]$, then there are at most $n$ squares $L \in Y$ from which $L'$ can be obtained by switching on $\eta_L(r, r'(L'), c)$ and if $L' \in X''[r]$, then there is at most one possible square $L \in Y$. Thus, the total number of switches from $Y$ to $X \setminus Y$ is at most
	\begin{equation}\label{e:XminusYUBinit}
    \sum_{r \in \mathscr{R}} (n|X'[r]|+|X''[r]|) \leq |X \setminus Y|\sum_{r \in \mathscr{R}} \left(n\frac{|X'[r]|}{|X \setminus Y|}+1\right).
	\end{equation}
    We next upper bound $|X'[r]|/|X \setminus Y|$ for all $r \in \mathscr{R}$. Let $r \in \mathscr{R}$. For $r' \in [n] \setminus \mathscr{R}$ and $s' \in [n] \setminus \{s\}$, let $Z(r', s')$ be the set of squares $L' \in X \setminus Y$ such that $r'(L') = r'$ and $L'[r, c]=s'$. Note that $X \setminus Y$ is the disjoint union
    \[
    \bigcup_{\substack{r' \in [n] \setminus \mathscr{R}\\s' \in [n] \setminus \{s\}}} Z(r', s')
    \]
    and so
    \begin{equation}\label{e:x'_bound}
    \frac{|X'[r]|}{|X \setminus Y|} = \Pr(\bL \in X'[r] | \bL \in X \setminus Y) = \sum_{\substack{r' \in [n] \setminus \mathscr{R}\\s' \in [n] \setminus \{s\}}} \Pr(\bL \in X'[r] | \bL \in Z(r', s'))\Pr(\bL \in Z(r', s')| \bL \in X \setminus Y).
    \end{equation}
    For $r' \in [n] \setminus \mathscr{R}$ and $s' \in [n] \setminus \{s\}$, define
    \[
    P(r', s') = P \cup \{(r', c, s), (r, c, s')\}.    
    \]
    Then $Z(r', s')$ is exactly the set of squares in $X \setminus Y$ that contain $P(r', s')$. Therefore by \cref{t:interc},
    \begin{equation}\label{e:x'_bound_2}
        \Pr(\bL \in X'[r] | \bL \in Z(r', s')) = \Pr(\eta_{\bL}(r, r', c) \text{ is of Type One} | \bL \supseteq P(r', s')) \leq \frac Dn,
    \end{equation}
    where $D = D(\alpha, \beta)$ from \cref{t:interc}. Combining \cref{e:x'_bound} and \cref{e:x'_bound_2}, we see that
	\[
	\frac{|X'[r]|}{|X \setminus Y|} \leq \frac{D}n
	\]
    for all $r \in \mathscr{R}$.
	Then by \cref{e:XminusYUBinit}, the total number of switches from $Y$ to $X \setminus Y$ is at most
	\begin{equation}\label{e:XminusYUB}
		|\mathscr{R}||X \setminus Y|(D+1).
	\end{equation}
	
	By combining \cref{e:YLB} and \cref{e:XminusYUB}, we obtain
	\begin{align}
		\Pr(\exists r \in \mathscr{R} : \bL[r, c]=s | \bL \supseteq P) &= \frac{|Y|}{|Y|+|X \setminus Y|} \nonumber\\
		&\leq \frac1{1+\frac{n(1-\alpha-\alpha'-\beta)}{|\mathscr{R}|(D+1)}} \nonumber\\
		&= \frac{|\mathscr{R}|(D+1)}{n(1-\alpha-\alpha'-\beta)+|\mathscr{R}|(D+1)}\label{e:Delta_potential},
	\end{align}
	which proves the first claim of the lemma. For the second claim, notice that if $\mathscr{R}=\{r\}$, then by \cref{e:Delta_potential},
    \[
    \Pr(\bL[r, c]=s | \bL \supseteq P) \leq \frac{(D+1)}{n(1-2\alpha-\beta)+D+1} = \frac{(D+1)}{n(1-2\alpha-\beta)}(1+o(1)).
    \]
    The claim then follows, since $D \to 22$ as $\alpha, \beta \to 0$, by \cref{t:interc}.
\end{proof}

Using analogous arguments, we can prove the following lemma.

\begin{lem}\label{l:UB2}
	Let $\bL$ be a random Latin square of order $n$ and let $P$ be a partial Latin square of order $n$. Let $\mathscr{C} \subseteq [n]$ be a set of columns, let $r \in [n]$ be a row, and let $s \in [n]$ be a symbol such that $(r, c, s) \notin P$ for all $c \in \mathscr{C}$. Let $\alpha, \beta' > 0$ be such that $|\cR_P| \leq \alpha n$ and $|\cC_P \cup \mathscr{C}| \leq \beta'n$ and suppose that $2\alpha+\beta'<1$.
    Then
	\[
	\Pr(\exists c \in \mathscr{C} : \bL[r, c]=s| \bL \supseteq P) \leq \frac{|\mathscr{C}|(D+1)}{n(1-2\alpha-\beta')+|\mathscr{C}|(D+1)}
	\]
    where $D = D(\alpha, \beta')$ from \cref{t:interc}. 
\end{lem}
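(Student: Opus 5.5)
The plan is to mirror the proof of \cref{l:UB}, but now $s$ is moved within row $r$ instead of within column $c$. Let $X$ be the set of Latin squares containing $P$, and let $Y\subseteq X$ be those with $L[r,c]=s$ for some $c\in\mathscr{C}$. We double count $\eta$-switches from $Y$ to $X\setminus Y$. The key observation is that switching $L$ on $\eta_L(r,r_1,c)$ always replaces $(r,c,s),(r_1,c,s'),(r,c',s')$ by $(r,c,s'),(r_1,c,s),(r,c',s)$, where $c'$ is the column with $L[r,c']=L[r_1,c]$. So in row $r$ the symbol $s$ moves from column $c$ to column $c'$. All other changes lie in rows $r_1$ and $r''$, where $r''$ is the row with $L[r'',c']=s$.

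\emph{Forward count.} Fix $L\in Y$ and the unique $c\in\mathscr{C}$ with $L[r,c]=s$. Take $r_1\notin \cR_P\cup\{r\}$, and discard those $r_1$ for which $c'\in \cC_P\cup\mathscr{C}$ (at most $\beta' n$ of them, since $r_1\mapsto c'$ is injective) or $r''\in\cR_P$ (at most $|\cR_P|$ of them). For the remaining choices the switch does not touch $P$. Indeed, the cell $(r,c)$ is not in $P$ because $(r,c,s)\notin P$; the cell $(r,c')$ is not in $P$ because $c'\notin\cC_P$; and rows $r_1,r''$ are outside $\cR_P$. The resulting $L'$ lies in $X\setminus Y$, because $s$ now sits in row $r$ at column $c'\notin\mathscr{C}$. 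This gives at least $|Y|\bigl(n(1-2\alpha-\beta')-1\bigr)$ switches. We also record that every switch produced this way satisfies $L'[r_1,c]=s$ with $r_1\notin\cR_P$.

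\emph{Backward count.} Fix $L'\in X\setminus Y$. A preimage switch is determined by the choice of $c\in\mathscr{C}$. Once $c$ is fixed, $r_1$ is forced to be the row with $L'[r_1,c]=s$, and we only count the switch if this row lies outside $\cR_P$. By \cref{l:eta_reverse}, the number of preimages for this $c$ is at most $n$ if $\eta_{L'}(r,r_1,c)$ is of Type One and at most $1$ otherwise. So we must bound, for each $c\in\mathscr{C}$, the conditional probability that $\eta_{\bL}(r,r_1,c)$ is of Type One given $\bL\in X\setminus Y$. As in \cref{l:UB}, we decompose $X\setminus Y$ into the classes $Z(r_1,s')$, consisting of the squares containing $P(r_1,s')=P\cup\{(r_1,c,s),(r,c,s')\}$. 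On each class with $r_1\notin\cR_P$ we apply \cref{t:interc}, which gives a bound of $D/n$. The augmented square has at most $|\cR_P|+2$ rows and at most $|\cC_P\cup\mathscr{C}|$ columns, which is absorbed into $D(\alpha,\beta')$ exactly as in \cref{l:UB}. Classes with $r_1\in\cR_P$ contribute no switches for this $c$. Summing over $c\in\mathscr{C}$, the number of switches is at most $|\mathscr{C}|(D+1)|X\setminus Y|$.

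Combining the two counts via $\Pr = |Y|/(|Y|+|X\setminus Y|)$ gives the stated bound. The $-1$ from excluding $r_1 = r$ is absorbed in the same way as in \cref{l:UB}. The main obstacle is the Type One estimate: it requires the row $r_1$ to avoid $\cR_P$ so that \cref{t:interc} applies. I handle this by only counting switches whose new $s$-row in column $c$ lies outside $\cR_P$, which the forward construction guarantees.
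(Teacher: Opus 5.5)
Your framework is the right one, and its losses match the $1-2\alpha-\beta'$ and $D(\alpha,\beta')$ in the statement. You switch $L\in Y$ on $\eta_L(r,r_1,c)$ so that $s$ moves within row $r$ from $c$ to $c'$. You lose only the $r_1$ with $r_1\in\cR_P$, $r''\in\cR_P$ or $c'\in\cC_P\cup\mathscr{C}$, and you control the reverse direction with \cref{l:eta_reverse} and \cref{t:interc}. The step that fails is the Type One estimate. You assert that, for fixed $c\in\mathscr{C}$, the class of $L'\in X\setminus Y$ with $L'[r_1,c]=s$ and $L'[r,c]=s'$ consists of the Latin squares containing $P(r_1,s')=P\cup\{(r_1,c,s),(r,c,s')\}$. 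In \cref{l:UB} the analogous identity holds because membership in $Y$ is decided by the row of $s$ in the single column $c$, so fixing $r'\notin\mathscr{R}$ already forces $L'\in X\setminus Y$. Here membership in $Y$ is decided by where $s$ sits in row $r$ across all of $\mathscr{C}$. A square containing $P(r_1,s')$ can perfectly well have $s$ in row $r$ at a column of $\mathscr{C}\setminus\{c\}$, i.e.\ lie in $Y$. So whenever $|\mathscr{C}|\geq 2$ (as in the application in \cref{l:LB}), your class is a proper subset of the completions of $P(r_1,s')$. \cref{t:interc} concerns the uniform measure on all completions of a partial Latin square, so it gives no bound on the conditional probability on that subset, which a priori could be larger.

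The repair is to refine the partition by the column $c^*\notin\mathscr{C}$ with $L'[r,c^*]=s$. Each class is then exactly the set of Latin squares containing $P^*=P\cup\{(r_1,c,s),(r,c,s'),(r,c^*,s)\}$, all of which automatically lie in $X\setminus Y$. Type One for $\eta_{L'}(r,r_1,c)$ then simply means $L'[r_1,c^*]=s'$. In the chase in the proof of \cref{t:interc}, $c^*$ is precisely the column $c_1$, so the new entry occupies the cell $(r_0,c_1)$, which none of the switches there touch. The only cost is one extra column, absorbed into $\beta'$ exactly as the paper absorbs the extra rows and columns of $P(r',s')$ in \cref{l:UB}. (As there, $r_1$ formally lies in $\cR_{P^*}$; but since $r_1\notin\cR_P$, its only entry is in column $c=c_0$, and that is all the proof of \cref{t:interc} uses.)

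Separately, your remark that the $-1$ from excluding $r_1=r$ is absorbed ``as in \cref{l:UB}'' is not accurate. There $r\in\mathscr{R}$ was excluded anyway, and the hypothesis read $|\cR_P|+1\leq\alpha n$. You can recover the exact bound another way. The map $r_1\mapsto c'$ is a bijection from $[n]\setminus\{r\}$ to $[n]\setminus\{c\}$, and $c\in\mathscr{C}$, so at most $|\cC_P\cup\mathscr{C}|-1$ choices are lost to $c'\in\cC_P\cup\mathscr{C}$. This leaves at least $n-2|\cR_P|-|\cC_P\cup\mathscr{C}|\geq n(1-2\alpha-\beta')$ switches for each $L\in Y$.
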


We are now ready to prove our lower bound probability. 

\begin{lem}\label{l:LB}
	Let $\bL$ be a random Latin square of order $n$ and let $P$ and $P'$ be distinct partial Latin squares of order $n$ such that $P' = P \cup \{(r, c, s)\}$. Let $\alpha, \beta > 0$ be such that $|\cR_{P'}| \leq \alpha n$ and $|\cC_{P'}| \leq \beta n$ and suppose that $2\alpha+\beta<1$. Then
	\[
	\Pr(\bL \supseteq P' | \bL \supseteq P) \geq \frac{\delta}n
	\]
    for some positive constant $\delta = \delta(\alpha, \beta)$ that satisfies $\delta \to 1/23$ as $\alpha, \beta \to 0$.
\end{lem}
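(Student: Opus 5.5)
We switch from squares containing $P$ but not $P'$ to squares containing $P'$, using $\eta$-switching, and read off a lower bound on the conditional probability. Let $X$ be the set of Latin squares containing $P$, let $Y\subseteq X$ be those with $L[r,c]=s$ (so $Y$ is exactly the set of squares containing $P'$), and let $Z=X\setminus Y$. Since $P'$ is a partial Latin square distinct from $P$, the cell $(r,c)$ is empty in $P$, and $s$ does not appear in row $r$ or column $c$ of $P$. We will show that $|Y|\geq (\delta/n)|X|$.

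The key preliminary is that most squares in $X$ are not too bad near $(r,c)$. For $L\in Z$, let $r^*=r^*(L)$ be the row with $L[r^*,c]=s$; note $r^*\neq r$. Applying \cref{l:UB} with $\mathscr R=\cR_P\setminus\{r\}$ (column $c$, symbol $s$), the probability conditioned on $\bL\supseteq P$ that $s$ lies in column $c$ within a row of $\cR_P$ is at most roughly $\alpha(D+1)/(1-2\alpha-\beta+\alpha(D+1))$. This is bounded away from $1$ but not small, so we only get a constant fraction. Alternatively, and better, we treat $L\in Z$ with $r^*\in\cR_P$ directly: then $(r^*,c,s)\in P$ is impossible since $s\notin$ column $c$ of $P$ and $(r,c)$ is empty, but the cell $(r^*,c)$ may still be unconstrained. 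The cleaner route is the following. Switch $L\in Z$ on $\eta_L(r^*,r,c)$. This places $s$ in cell $(r,c)$, so $L'\in Y$ provided $\eta$ avoids $P$. The $\eta$ structure touches cells in rows $r^*$, $r$, $r''$ and columns $c$, $c'$. We therefore need:
\begin{itemize}
\item $(r^*,c)\notin P$, which holds because the symbol there is $s$;
\item $(r,c')\notin P$ and the cells in rows $r$ and $r''$ used by the partial row cycle to avoid $P$.
\end{itemize}
Since the partial row cycle in Type Three runs along rows $r$ and $r''$, it could hit $P$ in many places. This suggests conditioning to make the degrees of freedom explicit. Instead, we reverse roles: switch from $Y$ to $Z$ and bound the reverse count.

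Concretely, for $L\in Y$ and $r_1\notin\cR_P\cup\{r\}$, switch on $\eta_L(r,r_1,c)$ exactly as in the proof of \cref{l:UB}. This yields a square in $Z$ with $r^*(L')=r_1$. The reverse direction of that proof gives $|Z|\cdot n(1-2\alpha-\beta)\lesssim$ the number of switches, but we now need an upper bound on switches into $Z$ in terms of $|Y|$. Each $L\in Y$ has at most $n$ switches, so $\#\text{switches}\leq n|Y|$. For the other direction, a square $L'\in Z$ with $r^*(L')\notin\cR_P$ is reached from some $L\in Y$ by $\eta_L(r,r^*(L'),c)$ whenever the reverse structure avoids $P$. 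By \cref{l:eta_reverse}, together with the facts listed in its proof (if $\eta_{L'}$ is of Type Three the preimage is unique and is also Type Three; if $\eta_{L'}$ is of Type One the preimage may be Type One or Type Two), we must show that for a constant fraction of $L'\in Z$ a preimage in $Y$ exists. This is done by taking the switch $\eta_{L'}(r,r^*,c)$ itself. Switching on it is an involution-like operation that moves $s$ into $(r,c)$, and it lands in $Y$ provided the structure avoids $P$. This is the main obstacle: the structure avoids $P$ except when $c'\in\cC_P$, $r''\in\cR_P$, or the partial row cycle between rows $r^*$ and $r''$ (both outside $\cR_P$, when $r''\notin\cR_P$) is involved. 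The last case is harmless because rows outside $\cR_P$ contain no cells of $P$, and column $c$ and row $r$ are handled directly.

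It therefore remains to bound the probability, given $\bL\supseteq P$, that $r^*\in\cR_P$, that $c'\in\cC_P$, or that $r''\in\cR_P$. Each of these is a union of events of the form ``a specified symbol lies in a set of rows/columns of size at most $\alpha n$ or $\beta n$''. We bound these by \cref{l:UB} and \cref{l:UB2}, after averaging over the values of the intermediate symbols via conditioning on $P$ plus one or two extra entries, as in \eqref{e:x'_bound}. Each bound has the form $x(D+1)/(1-2\alpha-\beta+x(D+1))$ with $x\in\{\alpha,\beta\}$. This tends to $0$ as $\alpha,\beta\to0$, so with probability $1-o_{\alpha,\beta}(1)$ a square $L'\in Z$ is good.

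To finish, each good $L'$ yields a distinct switch from some $L\in Y$, and each $L\in Y$ is the source of at most $n$ switches (one per $r_1$). Counting pairs gives $(1-\epsilon)|Z|\leq n|Y|$, up to the factor of at most $n$ coming from Type One multiplicity on the $Z$ side. To avoid losing a factor of $n$ there, we instead count pairs $(L,r_1)$ and use \cref{t:interc} to show the Type One contribution costs only a factor $D+1$, giving $|Z|(1-\epsilon)\leq (D+1)n|Y|$. Hence $\Pr(\bL\supseteq P'\mid\bL\supseteq P)\geq (1-\epsilon)/((D+1)n)$, and $\delta\to1/23$ as $\alpha,\beta\to0$.
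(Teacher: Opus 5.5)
There is a genuine gap in the central counting step. You orient the switches from $Y$ to $Z$, so you need a lower bound on their number in terms of $|Z|$. You get it by asserting that every good $L'\in Z$ has a preimage in $Y$, because switching on $\eta_{L'}(r,r^*,c)$ is ``involution-like''. This fails exactly when $\eta_{L'}(r,r^*,c)$ is of Type Two.

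The facts listed in the proof of \cref{l:eta_reverse} (which you quote without the Type Two clause) show that the output of an $\eta$-switch is never of Type Two: sources of Type One or Two produce outputs of Type One, and Type Three produces Type Three. So every $L'\in Z$ of Type Two receives no switch from $Y$, and nothing in your argument bounds how many such $L'$ there are. Whether $\eta_{L'}(r,r^*,c)$ is of Type Two or Three is the question of whether column $c$ lies on the row cycle $\rho_{L'}(r^*,r'',c')$. Heuristically this fails about half the time, and no tool in the paper controls it. Hence $(1-\epsilon)|Z|\le n|Y|$ is unjustified. It would also give $\delta\to1$, far stronger than the statement, which is a warning sign.

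The repair is the direction you abandoned, and it is the paper's proof. Your first attempt there failed only because you used $\eta_L(r^*,r,c)$, which runs a row cycle along row $r$. With $\eta_L(r,r^*,c)$ the only cells changed in row $r$ are $(r,c)$ and $(r,c')$, and all other changes lie in rows $r^*$ and $r''$. So every good $L\in Z$ (with $r^*\notin\cR_{P'}$, $c'\notin\cC_{P'}$, $r''\notin\cR_{P'}$) is switched forward into $Y$, and no invertibility is needed. For the upper bound, fix $L\in Y$ and $r_1\notin\cR_{P'}$. By \cref{l:eta_reverse} there are at most $n$ preimages if $\eta_L(r,r_1,c)$ is of Type One, and at most one otherwise. \cref{t:interc} applied to $P'$ bounds the proportion of Type One by $D/n$, giving at most $n(D+1)|Y|$ switches. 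Your final sentence states this inequality, but the justification before it is for the opposite direction.

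A secondary point: you combine the three bad events with a union bound and conclude that a square is good with probability $1-o_{\alpha,\beta}(1)$. That gives a positive $\delta$ only for small $\alpha,\beta$, whereas the lemma needs $\delta(\alpha,\beta)>0$ throughout $2\alpha+\beta<1$. Near the boundary $D$ blows up and each individual bound is close to $1$, so the union bound yields nothing. The paper instead writes $p=p_1p_2p_3$ by the chain rule. It conditions successively on $\cG_1$ and then on $\cG_1\cap\cG_2$, partitioning by $r^*$ and then by $(L[r,c],r^*,c')$, and adds those entries to $P$ before applying \cref{l:UB} or \cref{l:UB2}. Each factor is then at least $(1-2\alpha-\beta)/(1-2\alpha-\beta+x(D+1))>0$ with $x\in\{\alpha,\beta\}$.
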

\begin{proof}
	Let $X$ be the set of Latin squares of order $n$ containing $P$ and let $Y \subseteq X$ be the set of squares in $X$ containing $P'$. We prove the lemma using $\eta$-switching, where we consider switching from squares in $X \setminus Y$ to squares in $Y$.
	
	For $L \in X \setminus Y$, let $s'(L) = L[r, c]$, let $r'(L)$ be the row such that $L[r'(L), c] = s$, let $c'(L)$ be the column such that $L[r, c'(L)] = s$, and let $r''(L)$ be the row such that $L[r''(L), c'(L)] = s'(L)$. Define
    \begin{align*}
    &\mathcal{G}_1 = \{L \in X \setminus Y : r'(L) \notin \cR_{P'}\},\\
    &\mathcal{G}_2 = \{L \in X \setminus Y : c'(L) \notin \cC_{P'}\}, \text{ and}\\
    &\mathcal{G}_3 = \{L \in X \setminus Y : r''(L) \notin \cR_{P'}\}.
    \end{align*}
    Also define $\mathcal{G} = \mathcal{G}_1 \cap \mathcal{G}_2 \cap \mathcal{G}_3$. The first step in our proof is to lower bound $p = \Pr(\bL \in \mathcal{G} | \bL \in X \setminus Y)$. By the chain rule of probability, 
    \begin{equation}\label{e:p}
    p = \Pr(\bL \in \cG_1 | \bL \in X \setminus Y)\Pr(\bL \in \cG_2 | \bL \in \cG_1)\Pr(\bL \in \cG_3 | \bL \in \cG_1 \cap \cG_2).
    \end{equation}
    We will bound the three probabilities in the right hand side of \cref{e:p} separately.

      We now lower bound \[p_1 = \Pr(\bL \in \cG_1 | \bL \in X \setminus Y)=\frac{|\cG_1\cap X\setminus Y|}{|X\setminus Y|}=\frac{|\cG_1\cap X|}{|X\setminus Y|}\geq \frac{|\cG_1\cap X|}{|X|}= \Pr(\bL \in \cG_1 | \bL \in X ) ,\]
    where we used that $\cG_1\cap Y=\emptyset$. 
   Therefore
    \begin{equation}\label{e:1-p1_2}
    1-p_1 \leq \Pr(\exists r_1 \in \cR_{P'} : \bL[r_1, c]=s | \bL \supseteq P).
    \end{equation}
    Let $D = D(\alpha, \beta)$ from \cref{t:interc}. Apply \cref{l:UB} with $\mathscr{R} = \cR_{P'}$ and combine the conclusion with \cref{e:1-p1_2} to obtain, 
    \[
    1-p_1 \leq \frac{|\cR_{P'}|(D+1)}{n(1-2\alpha-\beta)+|\cR_{P'}|(D+1)} \leq \frac{\alpha(D+1)}{1-2\alpha-\beta+\alpha(D+1)},
    \]
    meaning that
    \begin{equation}\label{e:p1_LB}
    p_1 \geq 1-\frac{\alpha(D+1)}{1-2\alpha-\beta+\alpha(D+1)} = \frac{1-2\alpha-\beta}{1-2\alpha-\beta+\alpha(D+1)}.
    \end{equation}

    Next, we lower bound $p_2 = \Pr(\bL \in \cG_2 | \bL \in \cG_1)$. For $r' \in [n] \setminus \cR_{P'}$, let 
    \[
    \cG(r') = \{L \in \cG_1 : r'(L) = r'\}
    \]
    so that we can partition
    \[
    \cG_1 = \bigcup_{r' \in [n] \setminus \cR_{P'}} \cG(r').
    \]
    Then
    \begin{equation}\label{e:p2_sum}
    p_2 = \sum_{r' \in [n] \setminus \cR_{P'}} \Pr(\bL \in \cG_2 | \bL \in \cG(r'))\Pr(\bL \in \cG(r')| \bL \in \cG_1).
    \end{equation}
    We will lower bound $p_2(r') = \Pr(\bL \in \cG_2 | \bL \in \cG(r'))$ for each $r' \in [n] \setminus \cR_{P'}$. Let $r' \in [n] \setminus \cR_{P'}$. Set $P(r') = P \cup \{(r', c, s)\}$ and set $\mathscr{C} = \cC_{P'}$ so that 
    \begin{equation}\label{e:1-p2}
    1-p_2(r') = \Pr(\bL \notin \cG_2 | \bL \in \cG(r')) = \Pr(\exists c_1 \in \mathscr{C} : \bL[r, c_1]=s | \bL \supseteq P(r')).
    \end{equation}
    Let $\alpha_1 = \alpha+1/n$ so that $|\cR_{P(r')}| \leq \alpha_1n$.
    By \cref{l:UB2} and \cref{e:1-p2},
    \begin{equation}\label{e:1-p2_2}
    1-p_2(r') \leq \frac{|\mathscr{C}|(D'+1)}{n(1-2\alpha_1-\beta)+|\mathscr{C}|(D'+1)} \leq \frac{\beta(D'+1)}{1-2\alpha_1-\beta+\beta(D'+1)}
    \end{equation}
    where $D' = D(\alpha_1, \beta)$. From \cref{e:D_potential}, we may assume that
    \begin{equation}\label{e:D_cont}
    D(\alpha, \beta) = \frac{22+2\alpha^2-4\alpha+\alpha\beta-\beta}{(1-\alpha)(1-2\alpha-\beta)^2}(1+o(1)),
    \end{equation}
    which means that
    \[
    D' = D(1+o(1)).
    \]
    Combining this with \cref{e:1-p2_2}, we obtain
    \[
    p_2(r') \geq 1-\frac{\beta(D+1)}{1-2\alpha-\beta+\beta(D+1)}(1+o(1)) = \frac{1-2\alpha-\beta}{1-2\alpha-\beta+\beta(D+1)}(1-o(1))
    \]
    for each $r' \in [n] \setminus \cR_{P'}$. It follows from \cref{e:p2_sum} that
    \begin{equation}\label{e:p2_LB}
    p_2 \geq \frac{1-2\alpha-\beta}{1-2\alpha-\beta+\beta(D+1)}(1+o(1)).
    \end{equation}

    Finally, we lower bound $p_3 = \Pr(\bL \in \cG_3 | \bL \in \cG_1 \cap \cG_2)$. Let $T = [n] \times ([n] \setminus \cR_{P'}) \times ([n] \setminus \cC_{P'})$ and for $(s', r', c') \in T$, let 
    \[
    \cG(s', r', c') = \{L \in X \setminus Y : s'(L) = s', r'(L) = r', \text{ and } c'(L) = c'\}
    \]
    so that we can partition 
    \[
    \cG_1 \cap \cG_2 = \bigcup_{(s', r', c') \in T} \cG(s', r', c').
    \]
    Then
    \begin{equation}\label{e:p3_sum}
    p_3 = \sum_{(s', r', c') \in T} \Pr(\bL \in \cG_3 | \bL \in \cG(s', r', c'))\Pr(\bL \in \cG(s', r', c')| \bL \in \cG_1 \cap \cG_2).
    \end{equation}
    We will lower bound $p_3(s', r', c') = \Pr(\bL \in \cG_3 | \bL \in \cG(s', r', c'))$ for each $(s', r', c') \in T$. Let $(s', r', c') \in T$. Set $P(s', r', c') = P \cup \{(r, c, s'), (r', c, s), (r, c', s)\}$ and set $\mathscr{R} = \cR_{P'}$ so that
    \begin{equation}\label{e:p_3}
    1-p_3(s', r', c') = \Pr(\bL \notin \cG_3 | \bL \in \cG(s', r', c')) = \Pr(\exists r_2 \in \mathscr{R} : \bL[r_2, c'] = s' | \bL \supseteq P(s', r', c')).
    \end{equation}
    Let $\beta_1=\beta+1/n$ so that $|\cR_{P(s', r', c')}| \leq \alpha_1n$ and $|\cC_{P(s', r', c')}| \leq \beta_1n$. By \cref{l:UB} and \cref{e:p_3},
    \[
    1-p_3(s', r', c') \leq \frac{|\mathscr{R}|(D(\alpha_1, \beta_1)+1)}{n(1-2\alpha_1-\beta_1)+|\mathscr{R}|(D(\alpha_1, \beta_1)+1)} \leq \frac{\alpha(D+1)}{1-2\alpha-\beta+\alpha(D+1)}(1+o(1)),
    \]
    where we have again used \cref{e:D_cont}. Hence,
    \[
    p_3(s', r', c') \geq 1-\frac{\alpha(D+1)}{1-2\alpha-\beta+\alpha(D+1)}(1+o(1)) = \frac{1-2\alpha-\beta}{1-2\alpha-\beta+\alpha(D+1)}(1-o(1))
    \]
    for every $(s', r', c') \in T$. Thus, by \cref{e:p3_sum},
    \begin{equation}\label{e:p3_LB}
    p_3 \geq \frac{1-2\alpha-\beta}{1-2\alpha-\beta+\alpha(D+1)}(1-o(1)).
    \end{equation}
    
    Finally, by combining \cref{e:p}, \cref{e:p1_LB}, \cref{e:p2_LB}, and \cref{e:p3_LB}, we obtain
    \begin{equation}\label{e:p_final_bound}
    p \geq \frac{(1-2\alpha-\beta)^3}{(1-2\alpha-\beta+\alpha(D+1))^2(1-2\alpha-\beta+\beta(D+1))}(1-o(1)).
    \end{equation}
    Denote the right hand side of \cref{e:p_final_bound} by $\Delta'$ and note that $\Delta' \to 1$ as $\alpha, \beta \to 0$.

    We now count the number of switches from $X \setminus Y$ to $Y$. 
	Let $L \in \cG$ and let $r' = r'(L)$.
    The definition of $\cG$ ensures that $\eta_L(r, r', c) \cap P = \emptyset$. Therefore, the Latin square $L'$ obtained from $L$ by switching on $\eta_L(r, r', c)$ lies in $X$. Furthermore, $L'[r, c] = L[r', c] = s$, thus $L' \in Y$. Hence, the total number of switches from $X \setminus Y$ to $Y$ is at least
	\begin{equation}\label{e:XminusYLB}
		|\cG| \geq |X \setminus Y|p \geq |X \setminus Y|\Delta'.
	\end{equation}
	
	For $r' \in [n]$, let $Y[r']$ be the set of Latin squares $L
	'\in Y$ such that $\eta_{L'}(r, r', c)$ is of Type One and let $Y'[r'] = Y \setminus Y[r']$. Let $r' \in [n]$. By \cref{l:eta_reverse}, if $L' \in Y[r']$, then there are at most $n$ squares $L \in X \setminus Y$ from which $L'$ has been obtained by switching on $\eta_L(r, r', c)$ and if $L' \in Y'[r']$, then there is at most one such square $L \in X \setminus Y$. Thus, the total number of switches from $X \setminus Y$ to $Y$ is at most
	\begin{equation}\label{e:YUBinit}
		\sum_{r' \in [n]} (n|Y[r']|+|Y'[r']|) \leq |Y|\sum_{r' \in [n]} \left(n\frac{|Y[r']|}{|Y|}+1\right).
	\end{equation}
	By \cref{t:interc},
	\[
	\frac{|Y[r']|}{|Y|} \leq \frac{D}n
	\]
	for every $r' \in [n]$. Thus, by \cref{e:YUBinit}, the total number of switches from $X \setminus Y$ to $Y$ is at most
	\begin{equation}\label{e:YUB}
		|Y|n(D+1).
	\end{equation}
	
	By combining \cref{e:XminusYLB} and \cref{e:YUB}, we obtain
	\begin{equation}\label{e:p_LB_init}
		\Pr(\bL \supseteq P' | \bL \supseteq P) = \frac{|Y|}{|Y|+|X \setminus Y|}
		\geq \frac1{1+\frac{n(D+1)}{\Delta'}} 
		= \frac{\Delta'}{n(D+1)}(1-o(1)),
	\end{equation}
    which proves the lemma, since $D \to 22$ as $\alpha, \beta \to 0$ by \cref{t:interc}.
\end{proof}

We are now ready to prove \cref{t:main}.

\begin{proof}[Proof of \cref{t:main}]
	Write $P = \{q_1, q_2, \ldots, q_N\}$. Let $P_0 = \emptyset$ and for $i \in [N]$, let $P_i = \{q_j : j \in [i]\}$. By the chain rule of probability,
	\begin{equation}\label{e:CR}
		\Pr(\bL \supseteq P) = \prod_{i=1}^{N} \Pr(\bL \supseteq P_i | \bL \supseteq P_{i-1}).
	\end{equation}
	The upper bound now follows from \cref{e:CR} and \cref{l:UB} and the lower bound follows from \cref{e:CR} and \cref{l:LB}.
\end{proof}

\section{Applications} \label{s:applications}

In this section, we demonstrate the usefulness of \cref{t:main} by applying it to prove our results on subsquares, subrectangles, and isotopic copies in Latin squares. We need the following lemma estimating the conditional probability that a random Latin square contains a partial Latin square $P'$, given that it contains some partial Latin square $P$, when $P'$ is obtained from $P$ by adding an entry in an otherwise unused row. In the case where $|\cR_P|, |\cC_P|=o(n)$, it gives the asymptotically correct probability. 

\begin{lem}\label{l:newrcs}
	Let $\bL$ be a random Latin square of order $n$ and let $P$ and $P'$ be distinct partial Latin squares of order $n$ such that $P' = P \cup \{(r, c, s)\}$ and $r \notin \cR_P$. Let $\alpha, \beta>0$ be such that $|\cR_{P'}| \leq \alpha n$ and $|\cC_{P'}| \leq \beta n$.
    Then
    \[
    \frac{1-2\alpha-\beta}{n(1-2\alpha-\beta+\alpha(D+1))}(1-o(1)) \leq \Pr(\bL \supseteq P' | \bL \supseteq P) \leq \frac1{n(1-\alpha)}(1+o(1)),
    \]
      where $D = D(\alpha, \beta)$ from \cref{t:interc}. 
\end{lem}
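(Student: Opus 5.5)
The plan is to exploit the symmetry between unused rows. The key observation is that, because $r\notin\cR_P$, the rows outside $\cR_P$ are interchangeable given $\bL\supseteq P$. For any $r''\in[n]\setminus\cR_P$, swapping rows $r$ and $r''$ of a Latin square is a bijection on Latin squares of order $n$. It preserves containment of $P$, since neither row contains a non-empty cell of $P$. It maps squares with $L[r,c]=s$ to squares with $L[r'',c]=s$. Hence
\[
q:=\Pr(\bL[r,c]=s\mid \bL\supseteq P)=\Pr(\bL[r'',c]=s\mid \bL\supseteq P)\qquad\text{for all } r''\in[n]\setminus\cR_P .
\]
Moreover $\Pr(\bL\supseteq P'\mid\bL\supseteq P)=q$. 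Summing over the $n-|\cR_P|$ rows outside $\cR_P$, and using that symbol $s$ occurs exactly once in column $c$, we get the identity
\[
(n-|\cR_P|)\,q=\Pr\bigl(\exists r''\notin\cR_P:\bL[r'',c]=s\mid \bL\supseteq P\bigr)=1-\Pr\bigl(\exists r_1\in\cR_P:\bL[r_1,c]=s\mid \bL\supseteq P\bigr).
\]

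\emph{Upper bound.} Bound the middle probability by $1$ to get $q\le 1/(n-|\cR_P|)$. Since $|\cR_P|=|\cR_{P'}|-1\le \alpha n$, this gives $q\le \frac{1}{n(1-\alpha)}(1+o(1))$. No switching is needed here.

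\emph{Lower bound.} Bound the last probability using \cref{l:UB} with $\mathscr{R}=\cR_P$, with the same column $c$ and symbol $s$, and with $\alpha'=\alpha$. The hypotheses of \cref{l:UB} are checked as follows.
\begin{itemize}
\item We need $(r_1,c,s)\notin P$ for $r_1\in\cR_P$. This holds because $P'=P\cup\{(r,c,s)\}$ is a partial Latin square with $r\ne r_1$, so $s$ does not already appear in column $c$ of $P$.
\item We need $|\cR_P|+1=|\cR_{P'}|\le\alpha n$ and $|\cR_P\cup\mathscr{R}|=|\cR_P|\le \alpha n$, which both hold.
\item We need $|\cC_P|\le\beta n$, which holds.
\item We need $2\alpha+\beta<1$. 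This is implicitly required for the lower bound to be meaningful, and for $D(\alpha,\beta)$ from \cref{t:interc} to be defined.
\end{itemize}
\cref{l:UB} then gives
\[
\Pr\bigl(\exists r_1\in\cR_P:\bL[r_1,c]=s\mid\bL\supseteq P\bigr)\le\frac{|\cR_P|(D+1)}{n(1-2\alpha-\beta)+|\cR_P|(D+1)}\le\frac{\alpha(D+1)}{1-2\alpha-\beta+\alpha(D+1)}.
\]
The second inequality uses that the function $x\mapsto x(D+1)/(n(1-2\alpha-\beta)+x(D+1))$ is increasing in $x$, together with $|\cR_P|\le\alpha n$. Hence $(n-|\cR_P|)q\ge \frac{1-2\alpha-\beta}{1-2\alpha-\beta+\alpha(D+1)}$. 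Dividing by $n-|\cR_P|\le n$ yields the claimed lower bound; the factor $(1-o(1))$ absorbs the $\pm1$ discrepancies between $|\cR_P|$ and $|\cR_{P'}|$.

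The only real content is the row-swapping symmetry, which reduces the problem to \cref{l:UB}; the substantive difficulty is already contained in \cref{l:UB} and hence in \cref{t:interc}. The main point to verify carefully is that the row swap genuinely preserves $P$. This requires both $r\notin\cR_P$ and $r''\notin\cR_P$, which is exactly why the sum is taken only over rows outside $\cR_P$.
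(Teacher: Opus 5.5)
Your proof is correct, but it takes a different and more elementary route than the paper. The paper double counts row cycle switches $\rho_L(r,r',c)$ with $r'\notin\cR_{P'}$. These go from completions of $P'$ to completions of $P$ that do not contain $P'$. Each source square admits $n-|\cR_{P'}|$ such switches. By \cref{f:row_cyc_reverse}, each target square is reached at most once, and exactly once when the copy of $s$ in column $c$ lies in a row outside $\cR_{P'}$. The paper then applies \cref{l:UB} with $\mathscr{R}=\cR_{P'}$ to show that a constant proportion of targets have this form.

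You instead transpose the entire rows $r$ and $r''$, which is legitimate exactly because both rows avoid $P$. The double count thus becomes an exact symmetry of the conditional distribution, giving the identity $(n-|\cR_P|)q=1-\Pr(\exists r_1\in\cR_P:\bL[r_1,c]=s\mid \bL\supseteq P)$. You then apply \cref{l:UB} with $\mathscr{R}=\cR_P$ rather than $\cR_{P'}$.

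The two arguments encode the same count: the paper's switch is a local version of your row transposition, and its one-to-one correspondence is equivalent to your identity. Both give the upper bound $1/(n-|\cR_{P'}|+1)$. Your version buys several things: no reversibility bookkeeping is needed, the upper bound is immediate, and the lower bound comes out without the extra $(1-o(1))$ factor. The symmetry step also carries over directly to the column and symbol versions in \cref{rem:symm}, by transposing two columns or two symbols. The paper's version buys uniformity with the switching framework used throughout.

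Your remark that $2\alpha+\beta<1$ must be assumed is accurate. The paper's proof needs it in exactly the same way, when it invokes \cref{l:UB} with $\alpha'=\alpha$.
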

\begin{proof}
    Let $X$ be the set of Latin squares of order $n$ that contain $P$ and let $Y \subseteq X$ be the set of squares containing $P'$. We prove the lemma using row cycle switching, where we consider switching from squares $L \in Y$ on substructures $\rho_L(r, r', c)$ with $r' \notin \cR_{P'}$ to squares in $X \setminus Y$.

    Let $L \in Y$ and let $r' \in [n] \setminus \cR_{P'}$. Let $L'$ be the Latin square obtained from $L$ by switching on $\rho_L(r, r', c)$. Since $\{r, r'\} \cap \cR_P = \emptyset$, it follows that $L' \in X$. Furthermore, $L'[r, c] = L[r', c] \neq L[r, c] = s$, meaning that $L' \in X \setminus Y$. Hence, there are at least $n-|\cR_{P'}| \geq n(1-\alpha)$ switches from each $L \in Y$ to $X \setminus Y$, meaning that the total number of switches is at least
    \begin{equation}\label{e:num_switch_Y_LB}
        |Y|n(1-\alpha).
    \end{equation}
    On the other hand, there are trivially at most $n$ choices for $r'$ for each $L \in Y$, meaning that the number of switches is at most
    \begin{equation}\label{e:num_switch_Y_UB}
        |Y|n.
    \end{equation}

    For $L' \in X \setminus Y$, let $r'(L')$ be the row such that $L'[r'(L'), c]=s$. Let $X' \subseteq X$ be the set of squares $L'$ such that $r'(L') \in [n] \setminus \cR_{P'}$. By \cref{l:UB} with $\mathscr{R} = \cR_{P'}$,
    \begin{align}
    \frac{|X'|}{|X \setminus Y|} &\geq \frac{|X'|}{|X|} \nonumber\\
    &= 1-\Pr(\exists r' \in \cR_{P'} : \bL[r', c] = s | \bL \supseteq P) \nonumber\\
    &\geq 1-\frac{\alpha n(D+1)}{n(1-2\alpha-\beta)+\alpha n(D+1)} \nonumber\\
    &= \frac{1-2\alpha-\beta}{1-2\alpha-\beta+\alpha(D+1)},\label{e:X'_LB}
    \end{align}
    where $D = D(\alpha, \beta)$ from \cref{t:interc}. 

    Let $L' \in X \setminus Y$ and let $r' = r'(L')$. Suppose that $L \in Y$ is such that $L'$ is obtained from $L$ by switching on $\rho_L(r, r', c)$ for some $r' \in [n] \setminus \cR_{P'}$. Then $r' = r'(L')$. Thus, there is at most one choice of $L$ for each $L' \in X \setminus Y$, meaning that the total number of switches from $Y$ to $X \setminus Y$ is at most
    \begin{equation}\label{e:num_switch_XminusY_UB}
        |X \setminus Y|.
    \end{equation}
    Now suppose that $L' \in X'$ and let $L$ be the Latin square obtained from $L'$ by switching on $\rho_{L'}(r, r'(L'), c)$. Then by \cref{f:row_cyc_reverse}, $L'$ is the Latin square obtained from $L$ by switching on $\rho_L(r, r'(L'), c)$ and $r'(L') \notin \cR_{P'}$. It follows that there is exactly one choice of $L \in Y$ from which $L'$ is obtained by switching on $\rho_L(r, r', c)$ for some $r' \in [n] \setminus \cR_{P'}$ when $L' \in X'$. Hence, the total number of switches from $Y$ to $X \setminus Y$ is at least
    \begin{equation}\label{e:num_switch_XminusY_LB}
        |X'| \geq |X \setminus Y|\frac{1-2\alpha-\beta}{1-2\alpha-\beta+\alpha(D+1)},
    \end{equation}
    by \cref{e:X'_LB}.

    By combining \cref{e:num_switch_Y_LB} and \cref{e:num_switch_XminusY_UB}, we obtain
    \[
    \Pr(\bL \supseteq P' | \bL \supseteq P) = \frac{|Y|}{|Y|+|X \setminus Y|} \leq \frac1{1+n(1-\alpha)} = \frac1{n(1-\alpha)}(1+o(1)).
    \]
    Similarly, by combining \cref{e:num_switch_Y_UB} and \cref{e:num_switch_XminusY_LB}, we obtain
    \[
    \Pr(\bL \supseteq P' | \bL \supseteq P) = \frac{|Y|}{|Y|+|X \setminus Y|} \geq \frac1{1+\frac{n(1-2\alpha-\beta+\alpha(D+1))}{1-2\alpha-\beta}} = \frac{1-2\alpha-\beta}{n(1-2\alpha-\beta+\alpha(D+1))}(1-o(1)),
    \]
    completing the proof.
\end{proof}

\begin{rem}\label{rem:symm}
    Using the symmetry between the rows, columns, and symbols, a statement similar to \cref{l:newrcs} is true if instead of assuming that $r \notin \cR_P$, we assume that $c \notin \cC_P$ or $s \notin \cS_P$. In particular, if $|\cR_P|, |\cC_P|, |\cS_P| = o(n)$ and $r \notin \cR_P$, $c \notin \cC_P$, or $s \notin \cS_P$, then $\Pr(\bL \supseteq P' | \bL \supseteq P) = (1+o(1))/n$.
    This can be proved using analogous arguments as in the above proof but with column cycle switching or symbol cycle switching in place of row cycle switching, accordingly.
\end{rem}

\subsection{Subsquares and subrectangles}\label{s:subsq}

This subsection is dedicated to the proofs of \cref{t:E3n}, \cref{t:Emn_asymptotics}, and \cref{t:Eabn}. We will need the following notation. For a partial Latin square $P$ of order $n$ and subsets $\cR, \cC \subseteq [n]$, we denote by $P[\cR, \cC]$ the submatrix of $P$ induced by the rows in $\cR$ and the columns in $\cC$. We are now ready to prove \cref{t:E3n}.

\begin{proof}[Proof of \cref{t:E3n}]
	Let $T$ denote the set of all quadruples $(\cR, \cC, \cS, L)$ where $\cR$, $\cC$, and $\cS$ are subsets of $[n]$ of cardinality $3$ and $L$ is a Latin square of order $3$ with row index set $\cR$, column index set $\cC$, and symbol set $\cS$. For $(\cR, \cC, \cS, L) \in T$, let $P = P(\cR, \cC, \cS, L)$ denote the partial Latin square of order $n$ whose non-empty cells are precisely $\cR \times \cC$, that satisfies $P[\cR, \cC] = L$. Let $\bL$ be a random Latin square of order $n$. Then
	\[
	\mathbb{E}_3(n) = \sum_{(\cR, \cC, \cS, L) \in T} \Pr(\bL \supseteq P(\cR, \cC, \cS, L)).
	\]
	We want to lower bound $\Pr(\bL \supseteq P(\cR, \cC, \cS, L))$ for each $(\cR, \cC, \cS, L) \in T$.
	By symmetry, 
	\begin{equation}\label{e:E3symm}
		\Pr(\bL \supseteq P(\cR, \cC, \cS, L)) = \Pr(\bL \supseteq P(\cR', \cC', \cS', L'))
	\end{equation}
	for any $\{(\cR, \cC, \cS, L), (\cR', \cC', \cS', L')\} \subseteq T$ and so it suffices to lower bound $\Pr(\bL \supseteq P)$ where $P$ is the partial Latin square of order $n$ whose non-empty cells are precisely $[3]^2$ where $P[[3], [3]]$ is equal to the Latin square
	\[
	\begin{pmatrix}
		1&2&3\\
		2&3&1\\
		3&1&2
	\end{pmatrix}.
	\]
    Let $P_0 = \emptyset$ and for $i \in [9]$, let $P_i$ be obtained from $P_{i-1}$ by adding the $i$-th entry of the tuple
    \[
    ((1, 1, 1), (1, 2, 2), (2, 1, 2), (2, 2, 3), (1, 3, 3), (2, 3, 1), (3, 1, 3), (3, 2, 1), (3, 3, 2)).
    \]
	By the chain rule of probability,
	\begin{equation}\label{e:E3CR}
		\Pr(\bL \supseteq P) = \prod_{i=1}^{9} \Pr(\bL \supseteq P_i | \bL \supseteq P_{i-1}).
	\end{equation}
	For $i \in \{1, 2, 3, 4, 5, 7\}$,
	\begin{equation}\label{e:E3LB1}
		\Pr(\bL \supseteq P_i | \bL \supseteq P_{i-1}) \geq \frac{1-o(1)}n
	\end{equation}
	by \cref{l:newrcs} (or Remark~\ref{rem:symm}). For $i \in \{6, 8, 9\}$,
	\begin{equation}\label{e:E3LB2}
		\Pr(\bL \supseteq P_i | \bL \supseteq P_{i-1}) \geq \frac{1-o(1)}{23n}
	\end{equation}
	by \cref{l:LB}. Combining \cref{e:E3symm}, \cref{e:E3CR}, \cref{e:E3LB1}, and \cref{e:E3LB2}, we obtain
	\[
	\Pr(\bL \supseteq P(\cR, \cC, \cS, L)) \geq \left(\frac1n\right)^9\frac{1-o(1)}{23^3}
	\]
	for any $(\cR, \cC, \cS, L) \in T$. Since there are $\binom{n}{3}$ choices for each of $\cR$, $\cC$, and $\cS$ and there are $12$ choices for $L$ given any choice of $\cR$, $\cC$, and $\cS$, it follows that 
	\[
	\mathbb{E}_3(n) \geq 12\binom{n}{3}^3\left(\frac1{n}\right)^9\frac{1-o(1)}{23^3} = \frac1{219006}-o(1),
	\]
	as required.
\end{proof}

We now work towards proving \cref{t:Emn_asymptotics} and \cref{t:Eabn}.
In these proofs, we will distinguish two cases, depending on whether $a=o(n)$ or $a=\Omega(n)$. For the former range, the proofs will follow easily from \cref{t:main} but for the latter range, we employ a different approach. To estimate the probability of a random Latin square containing a specific subsquare (or subrectangle) with $a$ rows, we first estimate the probability that a random $a \times n$ Latin rectangle contains that specific subsquare (or subrectangle). We will then use the following theorem of McKay and Wanless~\cite{manysubsq}, which allows us to translate probabilities of events occurring in random Latin rectangles to events occurring in random Latin squares and is proven using permanent estimates, to translate this into a bound on the probability that we are interested in. 

\begin{thm}\label{t:ext}
    Let $k \leq n$ and let $L$ and $L'$ be $k \times n$ Latin rectangles. The ratio of the number of completions of $L$ to the number of completions of $L'$ is at most $\exp(O(n\log^2n))$.
\end{thm}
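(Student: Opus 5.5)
This is Theorem~\ref{t:ext}, attributed to McKay and Wanless; the plan is the standard permanent argument. A completion of a $k\times n$ Latin rectangle $L$ is built one row at a time. Given any $j\times n$ Latin rectangle $R$ with $k\le j<n$, let $A_R$ be the $n\times n$ $0/1$ matrix with $(c,s)$-entry $1$ exactly when symbol $s$ does not yet appear in column $c$ of $R$. The number of ways to add a further row to $R$ is then $\operatorname{per}(A_R)$. Since $A_R$ is $(n-j)$-regular, this count depends, up to tight bounds, only on $j$, and not on the structure of $R$.

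Concretely, I will use two classical estimates for an $m$-regular $0/1$ matrix $A$ of order $n$. The Bregman--Minc inequality gives the upper bound $\operatorname{per}(A)\le (m!)^{n/m}$. The van der Waerden bound (Egorychev--Falikman), applied to the doubly stochastic matrix $A/m$, gives the lower bound $\operatorname{per}(A)\ge m^n n!/n^n$.

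Let $N(L)$ denote the number of completions of $L$. Summing over all sequences of added rows, I obtain
\[
\prod_{j=k}^{n-1}\frac{(n-j)^n\,n!}{n^n}\;\le\; N(L)\;\le\;\prod_{j=k}^{n-1}\bigl((n-j)!\bigr)^{n/(n-j)}.
\]
Both bounds are independent of $L$, so the ratio $N(L)/N(L')$ is at most the ratio of the right-hand side to the left-hand side. Setting $m=n-j$, this ratio is
\[
\prod_{m=1}^{n-k}\frac{(m!)^{n/m}\,n^n}{m^n\, n!}.
\]

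It remains to show that each factor is at most $\exp(O(n\log n))$. By Stirling's formula, $(m!)^{1/m}=\frac{m}{e}(2\pi m)^{1/(2m)}(1+o(1))$. Hence $(m!)^{n/m}/m^n=e^{-n}\exp(O(n\log m/m))$. Also $n^n/n!=e^{n}\,O(\sqrt{n})^{-1}\le e^n$. So each factor is at most $\exp\bigl(O(n\log m/m)+O(1)\bigr)$. Over the at most $n$ values of $m$, the factor $e^{\pm n}$ cancels exactly in each term, leaving a total exponent of
\[
O\Bigl(n\sum_{m\le n}\tfrac{\log m}{m}\Bigr)+O(n)=O(n\log^2 n).
\]

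The main obstacle, or rather the delicate point, is exactly this cancellation of the $e^{n}$ factors. A crude bound on each factor, such as $\exp(O(n\log n))$, would only give $\exp(O(n^2\log n))$ overall. The sharp Bregman upper bound must therefore be matched against the van der Waerden lower bound to leading exponential order. What remains is only the small $m$-dependent error $(2\pi m)^{n/(2m)}$, together with an $O(\log n)$ contribution per row from $n^n/n!$, and these sum to $O(n\log^2 n)$.
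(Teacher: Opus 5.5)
Your proposal is correct and is essentially the standard permanent argument of McKay and Wanless, which the paper cites without reproducing: count completions row by row via the permanent of the $(n-j)$-regular matrix $A_R$, bound it above by Bregman--Minc and below by van der Waerden, and use the exact cancellation of the $e^{\pm n}$ factors. The one slip is that your per-factor bound $\exp(O(n\log m/m)+O(1))$ fails for $m=1$, where the factor is $n^n/n!$, of order $e^{n}/\sqrt{n}$; also, Stirling's $(1+o(1))$ is not uniform for small $m$. Using instead the uniform bound $m!\le e\sqrt{m}\,(m/e)^m$ together with $n^n/n!\le e^n$ gives each factor at most $\exp\bigl(n(1+\tfrac12\log m)/m\bigr)$, and these still sum to $O(n\log^2 n)$.
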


Let $k \leq n$ be a positive integer. For the remainder of this section, we will let $\mathscr{L}(k, n)$ denote the number of $k \times n$ Latin rectangles. Using permanent estimates in a standard way (as in, say, the proof of~\cite[Theorem 17.3]{num_squares}), we can easily obtain the bounds
\begin{equation}\label{e:Lab}
\left(\frac n{e^2}\right)^{kn} \leq \mathscr{L}(k, n) \leq n^{kn}.
\end{equation}
Being a bit more careful with estimates, one can obtain the following theorem of Luria and Simkin~\cite{num_rect}.

\begin{thm}\label{t:num_rect}
     Let $k = k(n)$ be a positive integer satisfying $\Omega(n) = k < n$ and let $\alpha = \alpha(n) = k/n$. 
     \[
     \mathscr{L}(k, n) = \left(\left(\frac1{1-\alpha}\right)^{(1-\alpha)/\alpha}\frac n{e^2}(1+o(1))\right)^{\alpha n^2}.
     \]
 \end{thm}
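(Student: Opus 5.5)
The plan is to count $k\times n$ Latin rectangles row by row and bound each row count with permanent estimates. This is the same standard technique that gives \cref{e:Lab}, but here we keep track of the lower-order terms.

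\textbf{Row-by-row count.} Fix a $i\times n$ Latin rectangle $R$ with $0\le i<n$. The number of ways to append an $(i+1)$-st row is $\operatorname{per}(A_R)$. Here $A_R$ is the $n\times n$ $0$-$1$ matrix indexed by columns and symbols, with a $1$ exactly when the symbol does not yet appear in that column. Every row and every column of $A_R$ contains exactly $n-i$ ones. Hence
\[
\mathscr{L}(k,n)=\sum_{R}\operatorname{per}(A_R),
\]
where the sum is over $(k-1)\times n$ rectangles, and iterating this identity shows that $\mathscr{L}(k,n)$ lies between $\prod_{i=0}^{k-1}m_i$ and $\prod_{i=0}^{k-1}M_i$. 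Here $m_i$ and $M_i$ are the minimum and maximum of the permanent over $(n-i)$-regular $n\times n$ $0$-$1$ matrices.

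\textbf{Bounds on $m_i$ and $M_i$.}
\begin{itemize}
\item For the lower bound, apply van der Waerden (Egorychev--Falikman) to $A_R/(n-i)$, which is doubly stochastic. This gives $m_i\ge (n-i)^n n!/n^n=\left(\frac{n-i}{e}\right)^n\sqrt{2\pi n}\,(1+o(1))$.
\item For the upper bound, Bregman's theorem gives $M_i\le ((n-i)!)^{n/(n-i)}=\left(\frac{n-i}{e}\right)^n(2\pi(n-i))^{n/(2(n-i))}(1+o(1))$.
\end{itemize}
So both $m_i$ and $M_i$ equal $\left(\frac{n-i}{e}\right)^n$ up to a factor $\exp(O(n\log n/(n-i)+\log n))$.

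\textbf{Controlling the error.} This is the main point needing care, because $n-k$ may be as small as $1$, so the Bregman error is not uniformly small. Summing the logarithm of the error over $i<k$ gives at most
\[
O\Big(n\sum_{m=1}^{n}\frac{\log m}{m}+n\log n\Big)=O(n\log^2 n)=o(n^2).
\]
Since $k=\Omega(n)$, $\alpha$ is bounded below, so $e^{o(n^2)}=(1+o(1))^{\alpha n^2}$. Thus
\[
\mathscr{L}(k,n)=e^{o(n^2)}\prod_{i=0}^{k-1}\left(\frac{n-i}{e}\right)^n .
\]

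\textbf{Evaluating the main term.} We have
\[
\sum_{i<k} n\log(n-i)=n\big(\log n!-\log (n-k)!\big).
\]
By Stirling, and using $x\log x$ with $x=0$ allowed, this equals
\[
n\big[n\log n-n-(n-k)\log(n-k)+(n-k)\big]+O(n\log n),
\]
which simplifies to $n\big[k\log n-k+(n-k)\log\tfrac{1}{1-\alpha}\big]+O(n\log n)$. Subtracting $kn$ for the powers of $e$ and writing $k=\alpha n$, the exponent becomes
\[
\alpha n^2\Big[\log n-2+\tfrac{1-\alpha}{\alpha}\log\tfrac{1}{1-\alpha}\Big]+o(n^2).
\]
Exponentiating gives exactly the claimed formula. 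The $o(n^2)$ term is again absorbed into the $(1+o(1))^{\alpha n^2}$ factor.
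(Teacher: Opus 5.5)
Your proof is correct and follows essentially the route the paper indicates. The paper does not prove this statement itself: it cites Luria and Simkin and says the result follows from the standard row-by-row van der Waerden/Bregman permanent argument (as in van Lint--Wilson, Thm.~17.3) with more careful error tracking, which is exactly what you do, including the $O(n\log^2 n)$ error that arises when $n-i$ is small. One cosmetic point: the $(1+o(1))$ in your Bregman display is not uniform in $i$ when $n-i$ is bounded, but the uniform error factor $\exp(O(n\log n/(n-i)+\log n))$ that you actually use is correct, for instance via $m!\le e\sqrt{m}\,(m/e)^m$.
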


 Since \cref{t:num_rect} only gives an estimate on $\mathscr{L}(k, n)$ when $k<n$, we state the following result for the number $\mathscr{L}(n)$ of Latin squares of order $n$~\cite[Theorem $17.3$]{num_squares}.

\begin{thm}\label{t:num_squares}
    \[
    \mathscr{L}(n) = \left(\frac n{e^2}(1+o(1))\right)^{n^2}.
    \]
\end{thm}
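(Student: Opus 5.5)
The plan is to count Latin squares row by row as $\mathscr{L}(n)=\prod_{k=0}^{n-1}\bigl(\mathscr{L}(k+1,n)/\mathscr{L}(k,n)\bigr)$. Each factor is then estimated with the classical permanent bounds, in the standard way of~\cite[Theorem $17.3$]{num_squares}, after which Stirling's formula finishes the job.

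\textbf{Row-extension counts.} Fix a $k\times n$ Latin rectangle $R$ with $0\le k\le n-1$. Form the bipartite graph between columns and symbols in which column $j$ is joined to symbol $x$ whenever $x$ does not yet appear in column $j$ of $R$. This graph is $(n-k)$-regular. The ways to append a $(k+1)$-st row to $R$ are exactly its perfect matchings, so their number is $\operatorname{per}(A_R)$, where $A_R$ is an $n\times n$ $0/1$ matrix with all row and column sums equal to $n-k$.

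\textbf{Permanent bounds.} Write $m=n-k$. Br\'egman's theorem (Minc's conjecture) gives
\[
\operatorname{per}(A_R)\le (m!)^{n/m}.
\]
The van der Waerden bound (Egorychev--Falikman), applied to the doubly stochastic matrix $A_R/m$, gives
\[
\operatorname{per}(A_R)\ge m^n\, n!/n^n.
\]
Both bounds are uniform in $R$. Multiplying over $k$, i.e.\ over $m=1,\dots,n$, yields
\[
\prod_{m=1}^{n} m^n\,\frac{n!}{n^n}\;\le\;\mathscr{L}(n)\;\le\;\prod_{m=1}^{n}(m!)^{n/m}.
\]

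\textbf{Asymptotics.}
\begin{itemize}
\item The lower product equals $(n!)^n(n!/n^n)^n=(n!)^{2n}/n^{n^2}$. By Stirling, $n!=(n/e)^n e^{O(\log n)}$, so this is $(n/e^2)^{n^2}e^{O(n\log n)}$.
\item For the upper product, Stirling gives $(m!)^{1/m}=(m/e)(2\pi m)^{1/(2m)}(1+O(1/m^2))$. The product therefore equals $\prod_{m}(m/e)^n$ times a correction factor.
\item The correction factor is $\exp\bigl(\sum_{m\le n}\tfrac{n}{2m}\log(2\pi m)+O(n)\bigr)=e^{O(n\log^2 n)}$.
\item The main term is $\prod_m (m/e)^n=(n!)^n e^{-n^2}=(n/e^2)^{n^2}e^{O(n\log n)}$.
\end{itemize}
Since $e^{O(n\log^2 n)}=(1+o(1))^{n^2}$, both bounds equal $\bigl(\tfrac{n}{e^2}(1+o(1))\bigr)^{n^2}$, which is the claim.

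The only nontrivial inputs are the two permanent theorems; I would quote them rather than reprove them. The remaining work is bookkeeping: the error terms are uniform in $k$ and sum to $o(n^2)$ in the exponent. The one point needing care is the small-$m$ factors, where the Stirling error is not small relatively. Their contribution to the exponent is nevertheless only $O(n)$ per factor, summed over $m$ as part of the harmonic-type sum $\sum_{m\le n} n\log m/m=O(n\log^2 n)$, so it is still $o(n^2)$.
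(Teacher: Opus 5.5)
Your proof is correct and is essentially the argument the paper relies on. The paper does not reprove this statement; it quotes it from \cite[Theorem 17.3]{num_squares}, whose proof is exactly your row-by-row count using the van der Waerden (Egorychev--Falikman) lower bound and Br\'egman's upper bound on $\operatorname{per}(A_R)$, followed by Stirling, giving $(n/e^2)^{n^2}e^{O(n\log n)}\le\mathscr{L}(n)\le(n/e^2)^{n^2}e^{O(n\log^2 n)}$ (and your small-$m$ caveat is unnecessary, since Stirling with relative error $O(1/m)$ holds uniformly for all $m\ge 1$).
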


Using the approach described after the proof of \cref{t:E3n}, we next estimate the probability of a random Latin square containing a specific $a \times b$ subrectangle when $a = \Omega(n)$. We are able to get the asymptotically correct value in the base of the exponent.

\begin{prop}\label{prop:rectangles}
    Suppose that $a = a(n) = \Omega(n)$ and $b = b(n) \geq a$ are such that $a+b < n$. Let $\alpha = \alpha(n) = a/n$ and let $\beta = \beta(n) = b/n$. Let $P$ be a partial Latin square of order $n$ whose non-empty cells form an $a \times b$ Latin rectangle and let $\bL$ be a random Latin square of order $n$. Then,
    \[
    \Pr(\bL \supseteq P) = \left(e^2\left(\frac{(1-\alpha)^{1-\alpha}(1-\beta)^{(1-\beta)^2}}{(1-\alpha-\beta)^{(1-\alpha-\beta)(1-\beta)}}\right)^{1/\alpha\beta}(1+o(1))\frac1n\right)^{|P|}
    \]
\end{prop}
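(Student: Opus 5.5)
We may assume, by relabelling rows, columns and symbols (which preserves the uniform distribution on Latin squares), that the non-empty cells of $P$ are $[a]\times[b]$. Since $P$ is an $a\times b$ Latin rectangle with $b$ symbols, every row of $P$ uses the same symbol set $\cS_P$, and $|\cS_P|=b$. The plan is to reduce to counting Latin rectangles, following the approach described after the proof of \cref{t:E3n}.

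\textbf{Reduction to the first $a$ rows.} Write $\Pr(\bL\supseteq P)$ as
\[
\frac{\sum_{R\supseteq P}\mathrm{comp}(R)}{\sum_{R}\mathrm{comp}(R)},
\]
where $R$ ranges over $a\times n$ Latin rectangles occupying rows $[a]$, and $\mathrm{comp}(R)$ is the number of completions of $R$ to a Latin square. By \cref{t:ext}, all the values $\mathrm{comp}(R)$ agree up to a factor $\exp(O(n\log^2 n))$. Hence
\[
\Pr(\bL\supseteq P)=\frac{N_P}{\mathscr{L}(a,n)}\exp(O(n\log^2n)),
\]
where $N_P$ is the number of $a\times n$ Latin rectangles containing $P$. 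Since $|P|=ab\geq a^2=\Omega(n^2)$, the error factor is $(1+o(1))^{|P|}$, which can be absorbed into the $(1+o(1))$ in the base.

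\textbf{Computing $N_P$.} An $a\times n$ Latin rectangle extending $P$ is determined by its entries in columns $[b+1,n]$. Each row must use exactly the $n-b$ symbols of $[n]\setminus\cS_P$ there, and the columns must have no repeats. So this part is exactly an $a\times(n-b)$ Latin rectangle on the symbol set $[n]\setminus \cS_P$, giving $N_P=\mathscr{L}(a,n-b)$.

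\textbf{Applying \cref{t:num_rect}.} We apply \cref{t:num_rect} twice. For the numerator we use $k=a$ and order $n-b$, with $\alpha'=a/(n-b)=\alpha/(1-\beta)$. This is valid because $a<n-b$ and $a=\Omega(n-b)$, and it gives
\[
1-\alpha'=\frac{1-\alpha-\beta}{1-\beta}.
\]
For the denominator we use $k=a$ and order $n$. Substituting both formulas, the ratio equals
\[
\frac{\left(\left(\frac{1}{1-\alpha'}\right)^{(1-\alpha')/\alpha'}\frac{n-b}{e^{2}}\right)^{a(n-b)}}{\left(\left(\frac{1}{1-\alpha}\right)^{(1-\alpha)/\alpha}\frac{n}{e^{2}}\right)^{an}}\,(1+o(1))^{an}.
\]
Because $b=\Omega(n)$, we have $(1+o(1))^{an}=(1+o(1))^{ab}$. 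The remaining work is to collect exponents and rewrite everything as a power with exponent $ab=\alpha\beta n^2$, giving base $x/n$ for some explicit $x$. The collection proceeds as follows.
\begin{itemize}
\item The $e^{2}$ powers contribute $e^{2ab}$, giving the leading $e^2$.
\item The powers of $n$ contribute $n^{a(n-b)-an}=n^{-ab}$.
\item The factor $(n-b)^{a(n-b)}$ contributes $(1-\beta)^{\alpha(1-\beta)n^2}$.
\item The $(1-\alpha')$-term has exponent $a(n-b)(1-\alpha')/\alpha'=(n-a-b)(n-b)$, contributing $(1-\alpha-\beta)^{-(1-\alpha-\beta)(1-\beta)n^2}(1-\beta)^{(1-\alpha-\beta)(1-\beta)n^2}$.
\item The denominator contributes $(1-\alpha)^{(1-\alpha)n^2}$.
\end{itemize}
The net power of $(1-\beta)$ is $(1-\beta)(\alpha+1-\alpha-\beta)n^2=(1-\beta)^2n^2$. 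Taking the $(\alpha\beta n^2)$-th root then yields exactly the claimed base.

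\textbf{Expected main obstacle.} The main difficulty is bookkeeping rather than ideas. One must check that every error term, both the $\exp(O(n\log^2n))$ from \cref{t:ext} and the $(1+o(1))^{\alpha' a(n-b)}$ and $(1+o(1))^{an}$ from \cref{t:num_rect}, is of the form $(1+o(1))^{ab}$. This uses $a,b=\Omega(n)$. One must also handle the hypotheses of \cref{t:num_rect} when $\alpha'$ is close to $1$, that is, when $a+b$ is close to $n$. There the factor $(1/(1-\alpha'))^{(1-\alpha')/\alpha'}$ stays bounded, but uniformity of the $o(1)$ should be checked. If it fails, the fallback is to pass to subsequences along which $\alpha,\beta$ converge.
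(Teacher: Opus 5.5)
Your proposal is correct and takes essentially the same route as the paper. You reduce to the first $a$ rows via \cref{t:ext}, identify the $a\times n$ rectangles extending $P$ with $a\times(n-b)$ Latin rectangles on the complementary symbols, apply \cref{t:num_rect} to both counts, and absorb the $\exp(O(n\log^2 n))$ and $(1+o(1))^{an}$ errors into $(1+o(1))^{ab}$ using $a,b=\Omega(n)$. Your exponent bookkeeping checks out, and the subsequence fallback is not needed, since \cref{t:num_rect} applies directly with $k=a=\Omega(n-b)$ and $a<n-b$.
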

\begin{proof}
    Let $L = P[\cR_P, \cC_P]$, which by assumption is an $a \times b$ Latin rectangle. Let $\cR = [a]$, let $\cC = \cS = [b]$, and let $L_1$ be an $a \times b$ Latin rectangle (with row indices $[a]$ and column indices and symbol set $[b]$). Let $P_1$ be the partial Latin square of order $n$ whose non-empty cells are precisely $\cR \times \cC$ that satisfies $P_1[\cR, \cC] = L_1$. Then by symmetry, 
    \begin{equation}\label{e:symm_prob}
    \Pr(\bL \supseteq P) = \Pr(\bL \supseteq P_1).
    \end{equation}
    We thus focus on determining $\Pr(\bL \supseteq P_1)$.

    Set $k=a$ and let $\bL_2$ be a random $k \times n$ Latin rectangle. Also let $P_2$ be the $k \times n$ partial Latin rectangle whose entries are precisely the entries of $P_1$. There is a bijection between the set of $k \times n$ Latin rectangles containing $P_2$ and the set of $k \times (n-b)$ Latin rectangles on symbol set $[n] \setminus [b]$ (which is well-defined, since $k \leq n-b$ by assumption), obtained by mapping a rectangle $L$ containing $P_2$ to its submatrix $L[[k], [n] \setminus [b]]$ (i.e., the submatrix obtained by removing the first $b$ columns). 
    Therefore by \cref{t:num_rect}, the number of $k \times n$ Latin rectangles containing $P_2$ is
    \[
    \left(\left(\frac {1-\beta}{1-\alpha-\beta}\right)^{(1-\alpha-\beta)/\alpha}\frac{(1-\beta)n}{e^2}(1+o(1))\right)^{\alpha(1-\beta)n^2}.
    \]
    On the other hand, \cref{t:num_rect} tells us that the number of $k \times n$ Latin rectangles is
    \[
   \left(\left(\frac 1{1-\alpha}\right)^{(1-\alpha)/\alpha}\frac n{e^2}(1+o(1))\right)^{\alpha n^2}.
    \]
    Therefore, 
    \begin{align}
        \Pr(\bL_2 \supseteq P_2) &= \frac{\left(\left(\frac {1-\beta}{1-\alpha-\beta}\right)^{(1-\alpha-\beta)/\alpha}\frac{(1-\beta)n}{e^2}(1+o(1))\right)^{\alpha(1-\beta)n^2}}{\left(\left(\frac 1{1-\alpha}\right)^{(1-\alpha)/\alpha}\frac n{e^2}(1+o(1))\right)^{\alpha n^2}} \nonumber\\
        &= \left(\left(\frac{(1-\alpha)^{1-\alpha}(1-\beta)^{(1-\beta)^2}}{(1-\alpha-\beta)^{(1-\alpha-\beta)(1-\beta)}}\right)^{1/\alpha\beta}\frac{e^2}n(1+o(1))\right)^{\alpha\beta n^2}.\label{e:L2P2}
    \end{align}
    
    Let $\bL_k$ be the submatrix $\bL[[k], [n]]$ of $\bL$ (i.e. the submatrix formed by the first $k$ rows of $\bL$) so that
    \begin{equation}\label{e:first_k_rows}
        \Pr(\bL \supseteq P_1) = \Pr(\bL_k \supseteq P_1).
    \end{equation}
    By \cref{t:ext} and \cref{e:L2P2},
    \begin{align}
    \Pr(\bL_k \supseteq P_1) &= \Pr(\bL_2 \supseteq P_2)\exp(O(n\log^2n)) \nonumber\\
    &= \left(\left(\frac{(1-\alpha)^{1-\alpha}(1-\beta)^{(1-\beta)^2}}{(1-\alpha-\beta)^{(1-\alpha-\beta)(1-\beta)}}\right)^{1/\alpha\beta}\frac{e^2}n(1+o(1))\right)^{\alpha\beta n^2}\exp(O(n\log^2n)) \nonumber\\
    &= \left(\left(\frac{(1-\alpha)^{1-\alpha}(1-\beta)^{(1-\beta)^2}}{(1-\alpha-\beta)^{(1-\alpha-\beta)(1-\beta)}}\right)^{1/\alpha\beta}\frac{e^2}n(1+o(1))\right)^{\alpha\beta n^2},\label{e:L_kP1}
    \end{align}
    since the $(1+o(1))^{\alpha\beta n^2}$ term can swallow the term $\exp(O(n\log^2n))$. The proposition now follows by combining \cref{e:symm_prob}, \cref{e:first_k_rows}, and \cref{e:L_kP1}.
\end{proof}

We now have all the tools that we need in order to prove \cref{t:Emn_asymptotics}.

\begin{proof}[Proof of \cref{t:Emn_asymptotics}]
    When $a = n/2$, McKay and Wanless~\cite{manysubsq} proved that
    \[
    \mathbb{E}_{a}(n) = 2^{-n^2}\exp(O(n\log^2n)) = \left(\frac{1+o(1)}8\right)^{a^2}\left(\frac an\right)^{a^2-3a},
    \]
    proving the theorem in this case. Henceforth, we assume that $a < n/2$.

    Let $T$ denote the set of all quadruples $(\cR, \cC, \cS, L)$ where $\cR, \cC, \cS \subseteq [n]$ are of cardinality $a$ and $L$ is a Latin square with row index set $\cR$, column index set $\cC$, and symbol set $\cS$. For $(\cR, \cC, \cS, L) \in T$, let $P = P(\cR, \cC, \cS, L)$ be the partial Latin square of order $n$ whose non-empty cells are precisely $\cR \times \cC$ that satisfies $P[\cR, \cC] = L$. Let $\bL$ be a random Latin square of order $n$. Then
	\begin{equation}\label{e:Eab_eq}
	\mathbb{E}_{a}(n) = \sum_{(\cR, \cC, \cS, L) \in T} \Pr(\bL \supseteq P(\cR, \cC, \cS, L)).
	\end{equation}
    By symmetry, 
    \begin{equation}\label{e:quad_prob}
    \Pr(\bL \supseteq P(\cR, \cC, \cS, L)) = \Pr(\bL \supseteq P(\cR', \cC', \cS', L'))
    \end{equation}
    for any $\{(\cR, \cC, \cS, L), (\cR', \cC', \cS', L')\} \subseteq T$. Thus, it suffices to bound $\Pr(\bL \supseteq P(\cR, \cC, \cS, L))$ for any fixed $(\cR, \cC, \cS, L) \in T$.
    There are $\binom{n}{a}$ possible choices each for $\cR$, $\cC$, and $\cS$ and by \cref{t:num_squares}, there are 
    \[
    \left(\frac a{e^2}(1+o(1))\right)^{a^2}
    \]
    choices for $L$ once $\cR$, $\cC$, and $\cS$ are determined. Fix $\cR = \cC = \cS = [a]$ and fix a Latin square $L$ of order $a$. Let $P = P(\cR, \cC, \cS, L)$. It follows from \cref{e:Eab_eq} that
    \begin{equation}\label{e:Ean_formula}
        \mathbb{E}_a(n) = \binom{n}{a}^3\left(\frac a{e^2}(1+o(1))\right)^{a^2}\Pr(\bL \supseteq P).
    \end{equation}

    We now distinguish two cases, depending on how $a$ grows with $n$.

    \textbf{Case 1:} $a = o(n)$. 
    By \cref{t:main},  
    \begin{equation}\label{e:PRCSL_prob}
    \left(\frac{\delta}n\right)^{a^2} \leq \Pr(\bL \supseteq P) \leq \left(\frac{\Delta}n\right)^{a^2}
    \end{equation}
    where $\delta = \delta(a/n, a/n) = 1/23-o(1)$ and $\Delta = \Delta(a/n, a/n) = 23+o(1)$ from \cref{t:main}. Combining 
    \cref{e:Ean_formula} with \cref{e:PRCSL_prob} and the inequality $\binom{n}{a} \geq (n/a)^a$, we obtain
    \[
    \mathbb{E}_a(n) \geq \left(\frac na\right)^{3a} \left(\frac a{e^2}(1+o(1))\right)^{a^2}\left(\frac{1-o(1)}{23n}\right)^{a^2} = \left(\frac{1-o(1)}{23e^2}\right)^{a^2}\left(\frac an\right)^{a^2-3a}.
    \]
    Similarly, combining \cref{e:Ean_formula} with \cref{e:PRCSL_prob} and the inequality $\binom{n}{a} \leq (en/a)^a$, we obtain
    \begin{align*}
    \mathbb{E}_a(n) &\leq \left(\frac {en}a\right)^{3a} \left(\frac a{e^2}(1+o(1))\right)^{a^2}\left(\frac{23+o(1)}{n}\right)^{a^2}  \\
    &= \left(\frac{23+o(1)}{e^{2-3/a}}\right)^{a^2}\left(\frac an\right)^{a^2-3a} \\
    & \leq  \left(\frac{23+o(1)}{e^{1/2}}\right)^{a^2}\left(\frac an\right)^{a^2-3a},
    \end{align*}
    where for the last inequality we have used the fact that $a \geq 2$.

    \textbf{Case 2:} $a = \Omega(n)$. Let $\alpha = \alpha(n) = a/n$. By \cref{prop:rectangles}, 
    \begin{equation}\label{e:contain_prob_omegan}
    \Pr(\bL \supseteq P) = \left(e^2\left(\frac{(1-\alpha)^{2-\alpha}}{(1-2\alpha)^{1-2\alpha}}\right)^{(1-\alpha)/\alpha^2}(1+o(1))\frac1n\right)^{a^2}.
    \end{equation}
    Define
    \[
    x = \left(\frac{(1-\alpha)^{2-\alpha}}{(1-2\alpha)^{1-2\alpha}}\right)^{(1-\alpha)/\alpha^2}
    \]
    and note that 
    \[
    \frac{1-o(1)}{23e^2} \leq x \leq \frac{23+o(1)}{e^2},
    \]
    regardless of $a$ (see \cref{fig:calpha} in \cref{s:conc}). Then by combining \cref{e:contain_prob_omegan} with \cref{e:Ean_formula} as in the previous case we can prove that
    \[
    \mathbb{E}_a(n) = (x(1+o(1)))^{a^2}\left(\frac an \right)^{a^2-3a},
    \]
    which proves the theorem.
\end{proof}

As alluded to in the introduction, we could have used~\cite[Theorem $1.6$]{subsqrandom} along with \cref{t:ext} in the range $\omega(n^{1/2}\log n) = a = o(n)$ in the above proof to get matching constants in the upper and lower bounds.
We are now ready to prove \cref{t:Eabn}, whose proof follows along similar lines as the proof of \cref{t:Emn_asymptotics}.

\begin{proof}[Proof of \cref{t:Eabn}]
    If $a=b$, then the theorem follows immediately from \cref{t:Emn_asymptotics}, thus we henceforth assume that $a \neq b$.
    Let $\epsilon > 0$, let $\alpha = \alpha(n) = a/n$, and let $\beta = \beta(n) = b/n$. Let $T$ denote the set of all quadruples $(\cR, \cC, \cS, L)$ where $\cR \subseteq [n]$ is of cardinality $a$, $\cC, \cS \subseteq [n]$ are of cardinality $b$, and $L$ is a Latin rectangle with row index set $\cR$, column index set $\cC$, and symbol set $\cS$. For $(\cR, \cC, \cS, L) \in T$, let $P = P(\cR, \cC, \cS, L)$ be the partial Latin square of order $n$ whose non-empty cells are precisely $\cR \times \cC$ that satisfies $P[\cR, \cC] = L$. Let $\bL$ be a random Latin square of order $n$. Then
	\begin{equation}\label{e:Eab_eq2}
	\mathbb{E}_{a, b}(n) = \sum_{(\cR, \cC, \cS, L) \in T} \Pr(\bL \supseteq P(\cR, \cC, \cS, L)).
	\end{equation}
    By symmetry, 
    \begin{equation}\label{e:quad_prob2}
    \Pr(\bL \supseteq P(\cR, \cC, \cS, L)) = \Pr(\bL \supseteq P(\cR', \cC', \cS', L'))
    \end{equation}
    for any $\{(\cR, \cC, \cS, L), (\cR', \cC', \cS', L')\} \subseteq T$. Thus, it suffices to bound the probability in \cref{e:quad_prob2} for any fixed $(\cR, \cC, \cS, L) \in T$.
    There are $\binom{n}{a}$ possible choices for $\cR$, there are $\binom{n}{b}$ choices each for $\cC$ and $\cS$, and there are $\mathscr{L}(a, b)$ choices for $L$ once $\cR$, $\cC$, and $\cS$ are determined. Fix $\cR = [a]$, fix $\cC = \cS = [b]$, and fix an $a \times b$ Latin rectangle $L$. Let $P = P(\cR, \cC, \cS, L)$. It follows from \cref{e:Eab_eq2} that
    \begin{equation}\label{e:Ean_formula2}
        \mathbb{E}_{a, b}(n) = \binom{n}{a}\binom{n}{b}^2\mathscr{L}(a, b)\Pr(\bL \supseteq P).
    \end{equation}

    We now distinguish two cases, depending on how $a$ grows with $n$.

    \textbf{Case 1:} $a < \epsilon n/2$. In this case, $b \leq (1-\epsilon)n$ and $2a+b \leq (1-\epsilon)n+\epsilon n/2 \leq (1-\epsilon/2)n$. Thus, we can employ \cref{t:main} to conclude that
    \begin{equation}\label{e:C1_prob}
    \left(\frac \delta n\right)^{ab} \leq \Pr(\bL \supseteq P) \leq \left(\frac \Delta n\right)^{ab}
    \end{equation}
    where $\delta = \delta(\epsilon/2, 1-\epsilon)$ and $\Delta = \Delta(\epsilon/2, 1-\epsilon)$ from \cref{t:main}.     
    By combining \cref{e:Ean_formula2}, \cref{e:C1_prob}, \cref{e:Lab}, and the fact that $\binom{n}{k} \geq (n/k)^k$ for any $k \in [n]$, we obtain
    \[
    \mathbb{E}_{a, b}(n) \geq \left(\frac na\right)^a \left(\frac nb \right)^{2b} \left(\frac b{e^2}\right)^{ab} \left(\frac \delta n\right)^{ab} = \left(\frac bn\right)^{ab-2b-a}\left(\frac ba\right)^{a}\left(\frac\delta{e^2}\right)^{ab}.
    \]
    Similarly, by combining \cref{e:Ean_formula2}, \cref{e:C1_prob}, \cref{e:Lab}, and the fact that $\binom{n}{k} \leq (en/k)^k$ for any $k \in [n]$, we obtain
    \[
    \mathbb{E}_{a, b}(n) \leq \left(\frac {en}a\right)^a \left(\frac {en}b \right)^{2b} b^{ab} \left(\frac \Delta n\right)^{ab} = \left(\frac bn\right)^{ab-2b-a}\left(\frac ba\right)^{a}\left(\Delta e^{(a+2b)/ab}\right)^{ab}.
    \]

    \textbf{Case 2:} $a \geq \epsilon n/2$. So $a, b = \Omega(n)$ and thus by \cref{prop:rectangles}, we can determine that
    \begin{equation}\label{e:C2_prob}
    \Pr(\bL \supseteq P) = \left(\frac {xe^2(1+o(1))}n\right)^{ab}
    \end{equation}
    where
    \[
    x = \left(\frac{(1-\alpha)^{1-\alpha}(1-\beta)^{(1-\beta)^2}}{(1-\alpha-\beta)^{(1-\alpha-\beta)(1-\beta)}}\right)^{1/\alpha\beta}.
    \]
    By \cref{t:num_rect},
    \begin{equation}\label{e:C2_numsq}
    \mathscr{L}(a, b) = \left(\frac{y(1+o(1))}{e^2}b\right)^{ab}
    \end{equation}
    where
    \[
    y = \left(\frac\beta{\beta-\alpha}\right)^{(\beta-\alpha)/\alpha}.
    \]
    Thus, by combining \cref{e:Ean_formula2}, \cref{e:C2_prob}, \cref{e:C2_numsq}, and the fact that $\binom{n}{k} = (n/k)^k\exp(O(k))$, we obtain
    \begin{align*}
    \mathbb{E}_{a, b}(n) &= \left(\frac na\right)^a\left(\frac nb\right)^{2b} \exp(O(b))\left(\frac{y(1+o(1))}{e^2}b\right)^{ab}\left(\frac {xe^2(1+o(1))}n\right)^{ab} \\
    &= \left(\frac bn\right)^{ab-2b-a}\left(\frac ba\right)^{a}(xy(1+o(1)))^{ab}.
    \end{align*}
    Finally, one has that 
    \[
    \frac18 < xy < 1
    \]
   for all values of  $a$ and $b$, with the infimum achieved as $\alpha,\beta\rightarrow 1/2$ and the supremum as $\alpha \rightarrow 0$ and $\beta \rightarrow 1$, which  can be checked by computer. This proves the theorem. 
\end{proof}

\subsection{The existence of isotopic copies} \label{ss:copies}

In this section, we prove Proposition \ref{prop:partial} which states that we a.a.s.\ find an isotopic copy of a constant-sized partial Latin square $P$ in a random Latin square $\bL$, as long as the density $m(P)$ is greater than $1/2$.  We begin with a definition. A partial Latin square $P$ is said to be \textit{1-degenerate} if there is some ordering of the entries of $P$ as $P=\{q_1,\ldots,q_{|P|}\}$ with $q_i=(r_i,c_i,s_i)$ for $1\leq i\leq |P|$ such that for each $1\leq i\leq |P|$:
\begin{itemize}
    \item either $r_i\neq r_j$ for all $1\leq j\leq i-1$; 
    \item or $c_i\neq c_j$ for all $1\leq j\leq i-1$; 
    \item or $s_i\neq s_j$ for all $1\leq j\leq i-1$. 
\end{itemize}
In words, for each $i=1,\ldots,|P|$, the entry $q_i$ has a unique row, column or symbol in the partial Latin square defined by $\{q_1,\ldots,q_i\}$. 

A simple consequence of Lemma \ref{l:newrcs} gives matching upper and lower bounds for 1-degenerate partial Latin squares. 

\begin{lem} \label{l:degen}
    Let $P$ be a $1$-degenerate partial Latin square of order $n$. Then for $\bL$ a random Latin square of order $n$ we have 
    \[ 
		\Pr(\mathbf{L}\supseteq P)=(1+o(1))(1/n)^{|P|}.
		\]
\end{lem}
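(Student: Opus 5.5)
We plan to use the chain rule of probability along the $1$-degenerate ordering, together with \cref{l:newrcs} and Remark~\ref{rem:symm}. Order the entries of $P$ as $q_1,\ldots,q_{|P|}$ as in the definition of $1$-degeneracy, and set $P_0=\emptyset$ and $P_i=\{q_1,\ldots,q_i\}$. Then
\[
\Pr(\bL\supseteq P)=\prod_{i=1}^{|P|}\Pr(\bL\supseteq P_i\mid \bL\supseteq P_{i-1}),
\]
and it suffices to show that each factor equals $(1+o(1))/n$, with the $o(1)$ uniform in $i$.

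Fix $i$ and write $q_i=(r_i,c_i,s_i)$. By $1$-degeneracy, at least one of the following holds: $r_i\notin\cR_{P_{i-1}}$, $c_i\notin\cC_{P_{i-1}}$, or $s_i\notin\cS_{P_{i-1}}$. In the first case we apply \cref{l:newrcs} directly with $P=P_{i-1}$ and $P'=P_i$. In the other two cases we apply its analogue from Remark~\ref{rem:symm}, obtained by conjugating the roles of rows with columns or symbols; this is legitimate because a uniformly random Latin square is invariant in distribution under such conjugations.

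For the parameters, take $\alpha=\beta=|P|/n$ (and the same bound for the symbol set). Since $P$ is constant-sized, $\alpha,\beta\to 0$, so $2\alpha+\beta<1$ for large $n$. Then $D(\alpha,\beta)\to 22$ by \cref{t:interc}, and hence $\alpha(D+1)\to 0$. The two bounds of \cref{l:newrcs} become
\[
\frac{1-2\alpha-\beta}{n(1-2\alpha-\beta+\alpha(D+1))}(1-o(1))=\frac{1-o(1)}{n}
\quad\text{and}\quad
\frac{1}{n(1-\alpha)}(1+o(1))=\frac{1+o(1)}{n}.
\]

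Multiplying the $|P|$ factors gives $\Pr(\bL\supseteq P)=(1+o(1))^{|P|}n^{-|P|}$. The only step needing any care is checking that $(1+o(1))^{|P|}=1+o(1)$. This holds here because $|P|$ is a constant. If instead $|P|$ were allowed to grow, we would need the $o(1)$ terms to be $o(1/|P|)$, which would require tracking the explicit error terms in \cref{l:newrcs}. We therefore treat $P$ as fixed, in keeping with the constant-sized setting of this subsection.
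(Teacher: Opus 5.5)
Your proposal is correct and follows the same route as the paper: the chain rule along the $1$-degenerate ordering, with each factor equal to $(1+o(1))/n$ by \cref{l:newrcs} or its symmetric analogue in Remark~\ref{rem:symm}. Your explicit restriction to constant $|P|$, which is needed for $(1+o(1))^{|P|}=1+o(1)$, is exactly what the paper assumes implicitly in this subsection, and it cannot be dropped: a full row $\{(1,c,c):c\in[n]\}$ is $1$-degenerate but is contained in $\mathbf{L}$ with probability $1/n!$.
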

\begin{proof}
    We let $P=\{q_1,\ldots,q_{|P|}\}$ with the ordering of entries witnessing the $1$-degeneracy of $P$. Let    $P_0=\emptyset$ and for $i=1,\ldots, |P|$, let $P_i:=\{q_1,\ldots,q_{i}\}$. By the chain rule of probability,
	\begin{equation*}
		\Pr(\bL \supseteq P) = \prod_{i=1}^{|P|} \Pr(\bL \supseteq P_i | \bL \supseteq P_{i-1}).
	\end{equation*}
	For each $1\leq i\leq |P|$, we then have that  $\Pr(\bL \supseteq P_i | \bL \supseteq P_{i-1})=(1+o(1))/n$ by Lemma \ref{l:newrcs} (and Remark~\ref{rem:symm}) on account of $q_i$ having a unique row, column or symbol in $P_i$. The result thus follows. 
\end{proof}

We are now ready to prove Proposition \ref{prop:partial}.
\begin{proof}[Proof of Proposition \ref{prop:partial}]
    Let $P$ be a partial Latin square with $m(P)>1/2$. We claim that $P$ is 1-degenerate. Indeed, we can form the required ordering of $P$ in reverse. Suppose that for some $1\leq i \leq |P|$, we have defined $q_j$ for $i<j\leq |P|$ and let $P':=P\setminus \{q_j:i<j\leq |P|\}$. We claim that there is some entry $q'\in P'$ which has a unique row, column or symbol. Indeed if this were not the case then 
    \[2(|\cR_{P'}|+|\cC_{P'}| +|\cS_{P'}|)\leq |\{(x,q)\in  (\cR_{P'}\times P' )\cupdot (\cC_{P'} \times P')\cupdot (\cS_{P'}\times P'): x\in q\}|=3|P'|,\]
    and so $|\cR_{P'}|+|\cC_{P'}| +|\cS_{P'}|-|P'|\leq |P'|/2$, contradicting that $m(P)>1/2$. Hence there does exist a $q'$ which has a unique row, column or symbol. We fix $q_i:=q'$ and repeating this for all $i=|P|,\ldots,1$ defines the necessary sequence. 

   Let $\cP$ be the collection of isotopic copies of $P$ and let $\sigma_P$ be the size of the autotopism group of $P$, that is the number of ways to permute $\cR_P$, $\cC_P$ and $\cS_P$ that result in obtaining $P$ again. We have that 
   \begin{equation} \label{eq:expectation est}
  |\cP|=\frac{n(n-1)\cdots(n-|\cR_P|+1)n\cdots(n-|\cC_P|+1)n\cdots(n-|\cS_P|+1)}{\sigma_P}=\frac{(1+o(1))}{\sigma_P}n^{|\cR_{P}|+|\cC_{P}| +|\cS_{P}|},      
   \end{equation}
   accounting for the choices for each non-empty row, column and symbol and correcting for the choices that give the same isotopic copy (the $\sigma_P$ factor).  We have used the fact that $|\cR_P|$, $|\cC_P|$, and $|\cS_P|$ are constant in \cref{eq:expectation est}.  
   By Lemma \ref{l:degen}, as $P$ is $1$-degenerate, we have that each copy of $P$ in $\cP$ lies in $\bL$ with probability $(1+o(1))/n^{|P|}$ (since $1$-degeneracy is easily seen to be preserved under taking isotopic copies). Hence, letting $X:=\sum_{P_0\in \cP}\mathbbm{1}[{\bL\supseteq  P_0}]$ be the random variable counting the number of copies of $P$ in $\bL$,   by linearity of expectation we get that \[\mathbb{E}[X]=\frac{(1+o(1))}{\sigma_P}n^{|\cR_{P}|+|\cC_{P}| +|\cS_{P}|-|P|}.\]
   By Chebyshev's inequality (see for example \cite[Section 1.2]{JLRbook}), we have that 
   \[\Pr(X=0)\leq \frac{\operatorname{Var}(X)}{\mathbb{E}[X]^2}=\frac{\mathbb{E}[X^2]-\mathbb{E}[X]^2}{\mathbb{E}[X]^2}.\]
   Hence it suffices to show that $\mathbb{E}[X^2]=(1+o(1))\mathbb{E}[X]^2$. For this we partition $\cP\times \cP$ as follows: 
\begin{itemize}
    \item Let $\cQ_1:=\{(P_1,P_2)\in \cP\times \cP: P_1\cap P_2\neq \emptyset\}$ be all pairs that share entries; 
    \item Let $\cQ_2:=\{(P_1,P_2)\in \cP\times \cP: (\cR_{P_1}\cap \cR_{P_2})\cup (\cC_{P_1}\cap \cC_{P_2}) \cup(\cS_{P_1}\cap \cS_{P_2}) \neq \emptyset\}\setminus \cQ_1$ be all pairs that do not share any entries but do share some non-empty row, column or symbol.
    \item Let $\cQ_3:=(\cP\times \cP)\setminus (\cQ_1\cup \cQ_2)$ be all the remaining pairs.
\end{itemize}
 For $(P_1,P_2)\in \cQ_1$, we have that $P_1\cap P_2$ is an isotopic copy of some partial Latin square $P'$ that is contained in $P$.   For $\emptyset\neq P'\subseteq P$, let $\cQ_1(P')$ be all the pairs $(P_1,P_2)\in \cQ_1$ whose intersection $P_1\cap P_2$ is isotopic to $P'$ and let $R(P')=2|\cR_{P}|-|\cR_{P'}|$, $C(P')=2|\cC_{P}|-|\cC_{P'}|$ and $S(P')=2|\cS_{P}|-|\cS_{P'}|$. 
 Note that for any $(P_1,P_2)\in \cQ_1(P')$ we have that $|\cR_{P_1\cup P_2}|=R(P')$
 and similarly for unions of the column and symbol sets of $P_1\cup P_2$. Moreover $|P_1\cup P_2|=2|P|-|P'|$ and 
as $|\cR_{P'}|+|\cC_{P'}|+|\cS_{P'}|-|P'| > |P'|/2\geq 1/2$ due to the definition of $m(P)$, we have that  
\begin{align} \nonumber R(P')+C(P')+S(P')-(2|P|-|P'|)&=2(|\cR_{P}|+|\cC_{P}| +|\cS_{P}|-|P|)-(|\cR_{P'}|+|\cC_{P'}| +|\cS_{P'}|-|P'|) \\ &\leq 2(|\cR_{P}|+|\cC_{P}| +|\cS_{P}|-|P|)-1/2. \label{eq:union est}\end{align}
    We can then bound the contribution of pairs in $\cQ_1$ to the variance as
     \begin{align}
    \nonumber     \mathbb{E}\left[ \sum_{(P_1,P_2)\in \cQ_1}\mathbbm{1}[\bL\supseteq P_1\cup P_2] \right] &= \sum_{\emptyset\neq P'\subseteq P}\quad \sum_{(P_1,P_2)\in \cQ_1(P') } \Pr(\bL\supseteq P_1\cup P_2) \nonumber  \\
     &\leq  \sum_{\emptyset\neq P'\subseteq P} \quad \sum_{(P_1,P_2)\in \cQ_1(P') } \left(24/n\right)^{|P_1\cup P_2|} \nonumber  \\
    &\leq \sum_{\emptyset\neq P'\subseteq P} n^{ R(P')+C(P')+S(P')}\left(24/n\right)^{2|P|-|P'|} \nonumber  \\
      &\leq 24^{2|P|} n^{2(|\cR_{P}|+|\cC_{P}| +|\cS_{P}|-|P|)-1/2}=o(\mathbb{E}[X]^2),
     \end{align} 
     where we used Theorem \ref{t:main} in the first inequality and \eqref{eq:expectation est} and \eqref{eq:union est} in the final line.

     Similarly, we have that 
     \begin{equation} \label{eq:Q2 estimate}
    \mathbb{E}\left[ \sum_{(P_1,P_2)\in \cQ_2}\mathbbm{1}[\bL\supseteq P_1\cup P_2] \right]=\sum_{(P_1,P_2)\in \cQ_2 } \Pr(\bL\supseteq P_1\cup P_2)\leq \sum_{(P_1,P_2)\in \cQ_2 } (24/n)^{2|P|}, \end{equation}
    using Theorem \ref{t:main} and the fact that $\cQ_2\cap\cQ_1=\emptyset$. As $|\cQ_2|=O(n^{2(|\cR_{P}|+|\cC_{P}| +|\cS_{P}|)-1})$, we also get that the expectation in equation \eqref{eq:Q2 estimate} is  $o(\mathbb{E}[X]^2)$ by considering \eqref{eq:expectation est}. 
    Finally then we have that 
    \begin{align*}
       \mathbb{E}[X^2]&=\mathbb{E}\left[\sum_{(P_1,P_2)\in \cP^2}\mathbbm{1}[\bL\supseteq P_1]\mathbbm{1}[\bL\supseteq P_2]\right] 
      = \mathbb{E}\left[\sum_{(P_1,P_2)\in \cP^2}\mathbbm{1}[\bL\supseteq P_1\cup P_2]\right]
       \\ &= \sum_{(P_1,P_2)\in \cQ_3}\Pr(\bL\supseteq P_1\cup P_2) +o(\mathbb{E}[X]^2)
       \\ &= \sum_{(P_1,P_2)\in \cQ_3} (1+o(1))(1/n)^{2|P|} +o(\mathbb{E}[X]^2)
       \\ &= (1+o(1))\frac{n(n-1)\cdots(n-2|\cR_P|+1)n\cdots(n-2|\cC_P|+1)n\cdots(n-2|\cS_P|+1)}{\sigma_P^2}(1/n)^{2|P|} +o(\mathbb{E}[X]^2)
       \\&=(1+o(1))\mathbb{E}[X]^2
    \end{align*}
    as required. Here we used Lemma \ref{l:degen} to bound the probability that $\bL\supseteq P_1\cup P_2$. This was possible as $P_1\cup P_2$ is $1$-degenerate for $(P_1,P_2)\in \cQ_3$ due to the fact that both $P_1$ and $P_2$ are  1-degenerate and they do not share any rows, columns or symbols. This completes the proof.  
\end{proof}

\section{Future work}\label{s:conc}

We have already mentioned two interesting directions for future research in \cref{ss:tightness}: Improving the constants $\delta$ and $\Delta$ in \cref{t:main} and investigating under which other conditions on partial Latin squares a theorem of the same nature as \cref{t:main} could hold. The former direction suggests the following interesting question: Given a partial Latin square $P$ and a random Latin square $\bL$, does 
\begin{equation}\label{e:lim}
\lim_{n \to \infty} n(\Pr(\bL \supseteq P))^{1/|P|}
\end{equation}
exist, and if so, what is it? \cref{prop:rectangles} tells us that when the non-empty cells of $P$ induce a Latin square of order $a = \alpha n$ with $0<\alpha<1/2$, this limit does exist and is equal to
\[
c(\alpha) := e^2\left(\frac{(1-\alpha)^{2-\alpha}}{(1-2\alpha)^{1-2\alpha}}\right)^{(1-\alpha)/\alpha^2}.
\]

\begin{figure}[h]
		\centering
		\includegraphics[width=0.64\linewidth]{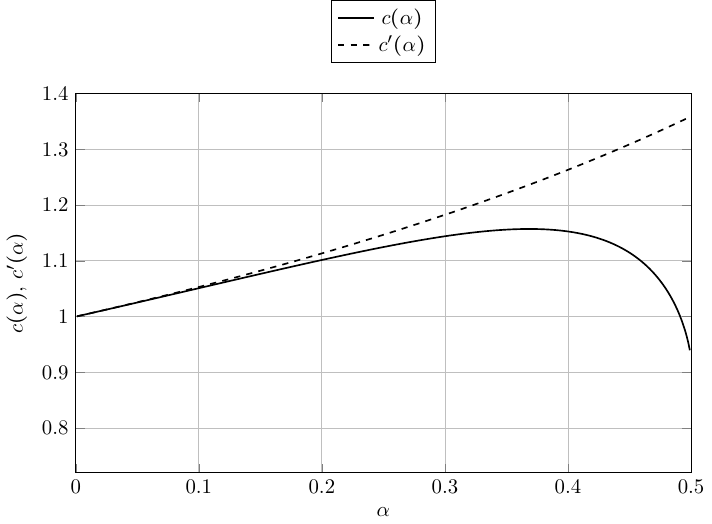}
		\caption{The functions $c(\alpha)$ and $c'(\alpha)$ for $0<\alpha<1/2$.}
		\label{fig:calpha}
	\end{figure} 
\cref{fig:calpha} plots the function $c(\alpha)$ in the range $0 < \alpha < 1/2$.
The behaviour of $c(\alpha)$ demonstrated by \cref{fig:calpha} is interesting and perhaps somewhat surprising. Indeed, if you were to model a random Latin square $\bL$ by sampling a uniformly random permutation independently for each row (a model which successfully predicts many properties of random Latin squares cf.\ \cite{LSparity}), then one would expect the probability of containing a fixed subsquare of order $\alpha n$ to be $((n-\alpha n)!/n!)^{\alpha n}$ which would result in a  monotone increasing value for $c(\alpha)$ (in fact this  prediction would give the constant $c'(\alpha)$ defined below).

We also record the fact that the limit in \cref{e:lim}, if it exists, cannot just depend on $|\cR_P|/n$ and $|\cC_P|/n$ (although we expect this to be the case if $|\cR_P|, |\cC_P| = o(n)$). Indeed, if $P$ is a partial Latin square whose non-empty cells induce a Latin square of order $a = \alpha n$, then this limit is equal to $c(\alpha)$ discussed above. Define $P'$ to be the partial Latin square of order $n$ defined by:
\begin{itemize}
    \item $P'[1, i] = i$ for all $i \in [a]$,
    \item $P'[i, 1] = i$ for all $i \in [a]$,
    \item all other cells of $P'$ are empty.
\end{itemize}
Using the ideas behind~\cite[Lemma $3.2$]{subsqrandomrect}, it is simple to prove that for a random Latin square $\bL$ of order $n$, 
\[
\Pr(\bL \supseteq P') = n\left(\frac{(n(1-\alpha))!}{n!}\right)^2,
\]
and so 
\[
c'(\alpha):=\lim_{n \to \infty} n(\Pr(\bL \supseteq P'))^{1/|P'|} = e(1-\alpha)^{(1-\alpha)/\alpha},
\]
which is different from $c(\alpha)$. Both functions $c(\alpha)$ and $c'(\alpha)$ are plotted in Figure~\ref{fig:calpha}. The fact that these functions differ justifies the claim that this limit in \cref{e:lim}, if it exists, must depend on the structure of $P$ in a non-trivial way. It is an interesting challenge to even pose a plausible conjecture regarding when this limit exists and what value it takes, which of course depends on the partial Latin square $P$. A similar challenge was posed in the thesis of the first author~\cite{thesis}.

A further direction for future research is to extend our results to the setting of Latin rectangles. Latin rectangles are much more flexible objects than Latin squares and consequently there has been more progress on studying substructures in `thin' Latin rectangles than there has previously been for Latin squares.  
 For $K>0$, a $k \times n$ Latin rectangle is \emph{$K$-sparse} if every row contains at most $K$ non-empty cells, every column contains at most $K$ non-empty cells, and every symbol occurs at most $K$ times. Recently, improving upon a result of Godsil and McKay~\cite{GM90}, Divoux, Kelly, Kennedy, and Sidhu~\cite{subsqrandom} proved that if $P$ is a $k \times n$ partial Latin rectangle with $k \leq (1/2-o(1))n$ that is $o(1)$-sparse, then the probability that a random $k \times n$ Latin rectangle contains $P$ is $n^{-|P|}\exp(o(|P|))$. They conjectured that the same result should hold for all $k \leq n$. \cref{t:main} resolves their conjecture when $k=n$ up to constant factors when $|P|$ is constant. However, it sheds no light on the conjecture when $k<n$. As mentioned in \cref{ss:eta_switch}, the main obstacle to extending our results to Latin rectangles is the fact that $\eta$-switching is not well-defined in a Latin rectangle. Generalising \cref{t:main} to the setting of Latin rectangles is a worthy direction for future research.

 We conclude by discussing one last direction for future research. A probability distribution $\cP$ on the space of Latin squares of order $n$ is $q$-spread if for \textit{every} partial Latin square $P$ of order $n$, the probability that a random Latin square $\bL \sim \cP$ chosen from this distribution contains $P$ is at most $q^{|P|}$. There has been recent interest in obtaining an $O(1/n)$-spread distribution on the space of Latin squares of order $n$, due to its connection with the so-called threshold problem for Latin squares. Indeed, via breakthrough results on the (fractional) Kahn-Kalai conjecture \cite{spread_threshold} the existence of such a spread distribution implies the correct threshold for a random 3-uniform 3-partite hypergraph to contain a Latin square. The first meaningful result in this direction was by Sah, Sawhney, and Simkin~\cite{thresh1}, who constructed an $n^{o(1)-1}$-spread distribution on the set of Latin squares of order $n$. Shortly after, Kang, Kelly, K\"uhn, Methuku, and Osthus~\cite{thresh2} proved the existence of a $O(\log n/n)$-spread distribution. Finally, Jain and Pham~\cite{thresh3}, and independently Keevash~\cite{thresh4}, proved the existence of a $O(1/n)$-spread distribution and thus settled the threshold problem. None of these distributions were the uniform distribution. Nonetheless, it is natural to believe that the uniform distribution is $O(1/n)$-spread, which would provide an alternative direct proof to determining the threshold of a Latin square. Although \cref{t:main} does not prove that the uniform measure is $O(1/n)$-spread, since it does not apply to all partial Latin squares of order $n$, 
it does show the spread condition for a wide range of partial Latin squares and gives  evidence to believe that the statement is true.
 In fact, a conjecture of Kelly~\cite{KellyConj} even predicts the correct constant in the spreadness, stating that the uniform distribution on the space of Latin squares of order $n$ is $(e^2(1+o(1))/n)$-spread. Note that the constant $e^2$ cannot be improved to any smaller constant, as can be seen by setting $P$ to be a (full) Latin square.  

\printbibliography

@article {subsqrandom,
    AUTHOR = {Divoux, A. and Kelly, T. and Kennedy, C. and Sidhu, J.},
     TITLE = {Subsquares in {R}andom {L}atin {S}quares and {R}ectangles},
   JOURNAL = {J. Combin. Des.},
  FJOURNAL = {Journal of Combinatorial Designs},
    VOLUME = {34},
      YEAR = {2026},
    NUMBER = {4},
     PAGES = {184--197},
      ISSN = {1063-8539,1520-6610},
   MRCLASS = {05B15},
  MRNUMBER = {5034585},
       DOI = {10.1002/jcd.70004},
       URL = {https://doi.org/10.1002/jcd.70004},
}

@article {manysubsq,
	AUTHOR = {McKay, B. D. and Wanless, I. M.},
	TITLE = {Most {L}atin squares have many subsquares},
	JOURNAL = {J. Combin. Theory Ser. A},
	FJOURNAL = {Journal of Combinatorial Theory. Series A},
	VOLUME = {86},
	YEAR = {1999},
	NUMBER = {2},
	PAGES = {322--347},
	ISSN = {0097-3165,1096-0899},
	MRCLASS = {05B15},
	MRNUMBER = {1685535},
	MRREVIEWER = {J.\ D\'{e}nes},
	DOI = {10.1006/jcta.1998.2947},
	URL = {https://doi.org/10.1006/jcta.1998.2947},
}

@article {cycstrucrandom,
	AUTHOR = {Cavenagh, N. J. and Greenhill, C. and Wanless,
		I. M.},
	TITLE = {The cycle structure of two rows in a random {L}atin square},
	JOURNAL = {Random Structures Algorithms},
	FJOURNAL = {Random Structures \& Algorithms},
	VOLUME = {33},
	YEAR = {2008},
	NUMBER = {3},
	PAGES = {286--309},
	ISSN = {1042-9832,1098-2418},
	MRCLASS = {05B15},
	MRNUMBER = {2446483},
	MRREVIEWER = {Steven\ T.\ Dougherty},
	DOI = {10.1002/rsa.20216},
	URL = {https://doi.org/10.1002/rsa.20216},
}

@article {subsqrandomrect,
	AUTHOR = {Allsop, J. and Wanless, I. M.},
	TITLE = {Subsquares in random {L}atin rectangles},
	JOURNAL = {Combinatorica},
	FJOURNAL = {Combinatorica. An International Journal on Combinatorics and
		the Theory of Computing},
	VOLUME = {45},
	YEAR = {2025},
	NUMBER = {3},
	Note = {Paper No. 29, 18pp.},
	ISSN = {0209-9683,1439-6912},
	MRCLASS = {05B15},
	MRNUMBER = {4905568},
	DOI = {10.1007/s00493-025-00156-0},
	URL = {https://doi.org/10.1007/s00493-025-00156-0},
}

@article {KS,
	AUTHOR = {Kwan, M. and Sudakov, B.},
	TITLE = {Intercalates and discrepancy in random {L}atin squares},
	JOURNAL = {Random Structures Algorithms},
	FJOURNAL = {Random Structures \& Algorithms},
	VOLUME = {52},
	YEAR = {2018},
	NUMBER = {2},
	PAGES = {181--196},
	ISSN = {1042-9832,1098-2418},
	MRCLASS = {05B15 (60C05)},
	MRNUMBER = {3758956},
	MRREVIEWER = {Lars-Daniel\ \"{O}hman},
	DOI = {10.1002/rsa.20742},
	URL = {https://doi.org/10.1002/rsa.20742},
}

@article {KSS,
	AUTHOR = {Kwan, M. and Sah, A. and Sawhney, M.},
	TITLE = {Large deviations in random {L}atin squares},
	JOURNAL = {Bull. Lond. Math. Soc.},
	FJOURNAL = {Bulletin of the London Mathematical Society},
	VOLUME = {54},
	YEAR = {2022},
	NUMBER = {4},
	PAGES = {1420--1438},
	ISSN = {0024-6093,1469-2120},
	MRCLASS = {60F10 (05B15 05C80)},
	MRNUMBER = {4488316},
	MRREVIEWER = {Oliver\ Johnson},
	DOI = {10.1112/blms.12638},
	URL = {https://doi.org/10.1112/blms.12638},
}

@article {KSSS,
	AUTHOR = {Kwan, M. and Sah, A. and Sawhney, M. and Simkin,
	M.},
	TITLE = {Substructures in {L}atin squares},
	JOURNAL = {Israel J. Math.},
	FJOURNAL = {Israel Journal of Mathematics},
	VOLUME = {256},
	YEAR = {2023},
	NUMBER = {2},
	PAGES = {363--416},
	ISSN = {0021-2172,1565-8511},
	MRCLASS = {05B15 (60C05)},
	MRNUMBER = {4651013},
	DOI = {10.1007/s11856-023-2513-9},
	URL = {https://doi.org/10.1007/s11856-023-2513-9},
}

@phdthesis{thesis,
	author  = "Allsop, J.",
	title   = "Latin subrectangles",
	school  = "Monash University",
	year    = "2025",
	url     = {https://bridges.monash.edu/articles/thesis/Latin_Subrectangles/30302224?file=58551064}
}

@article {canonicallabel,
	AUTHOR = {Gill, M. J. and Mammoliti, A. and Wanless, I. M.},
	TITLE = {Canonical labeling of {L}atin squares in average-case
	polynomial time},
	JOURNAL = {Random Structures Algorithms},
	FJOURNAL = {Random Structures \& Algorithms},
	VOLUME = {66},
	YEAR = {2025},
	NUMBER = {4},
	note = {Paper No. e70015, 23pp.},
	ISSN = {1042-9832,1098-2418},
	MRCLASS = {05B15},
	MRNUMBER = {4931615},
	DOI = {10.1002/rsa.70015},
	URL = {https://doi.org/10.1002/rsa.70015},
}

@article {GM90,
	AUTHOR = {Godsil, C. D. and McKay, B. D.},
	TITLE = {Asymptotic enumeration of {L}atin rectangles},
	JOURNAL = {J. Combin. Theory Ser. B},
	FJOURNAL = {Journal of Combinatorial Theory. Series B},
	VOLUME = {48},
	YEAR = {1990},
	NUMBER = {1},
	PAGES = {19--44},
	ISSN = {0095-8956,1096-0902},
	MRCLASS = {05A16 (05B15)},
	MRNUMBER = {1047551},
	DOI = {10.1016/0095-8956(90)90128-M},
	URL = {https://doi.org/10.1016/0095-8956(90)90128-M},
}

@incollection {genswitch,
	AUTHOR = {Hasheminezhad, M. and McKay, B. D.},
	TITLE = {Combinatorial estimates by the switching method},
	BOOKTITLE = {Combinatorics and graphs},
	SERIES = {Contemp. Math.},
	VOLUME = {531},
	PAGES = {209--221},
	PUBLISHER = {Amer. Math. Soc., Providence, RI},
	YEAR = {2010},
	ISBN = {978-0-8218-4865-4},
	MRCLASS = {05A16 (05A20 05C20 60C05)},
	MRNUMBER = {2757801},
	MRREVIEWER = {Pu\ Gao},
	DOI = {10.1090/conm/531/10469},
	URL = {https://doi.org/10.1090/conm/531/10469},
}

@article {cycswitch,
	AUTHOR = {Wanless, I. M.},
	TITLE = {Cycle switches in {L}atin squares},
	JOURNAL = {Graphs Combin.},
	FJOURNAL = {Graphs and Combinatorics},
	VOLUME = {20},
	YEAR = {2004},
	NUMBER = {4},
	PAGES = {545--570},
	ISSN = {0911-0119,1435-5914},
	MRCLASS = {05B15},
	MRNUMBER = {2108400},
	MRREVIEWER = {Haitao\ Cao},
	DOI = {10.1007/s00373-004-0567-7},
	URL = {https://doi.org/10.1007/s00373-004-0567-7},
}

@article {genLS,
	AUTHOR = {Jacobson, M. T. and Matthews, P.},
	TITLE = {Generating uniformly distributed random {L}atin squares},
	JOURNAL = {J. Combin. Des.},
	FJOURNAL = {Journal of Combinatorial Designs},
	VOLUME = {4},
	YEAR = {1996},
	NUMBER = {6},
	PAGES = {405--437},
	ISSN = {1063-8539,1520-6610},
	MRCLASS = {05B15 (60J10)},
	MRNUMBER = {1410617},
	MRREVIEWER = {Lars\ D\o vling\ Andersen},
	DOI = {10.1002/(SICI)1520-6610(1996)4:6<405::AID-JCD3>3.0.CO;2-J},
	URL =
	{https://doi.org/10.1002/(SICI)1520-6610(1996)4:6<405::AID-JCD3>3.0.CO;2-J},
}

@article {LSparity,
    author = {Kwan, M. and Petrova, K. and Sawhney, M.},
    title = {Parities in random Latin squares},
    year = {2025},
    journal = {arXiv:2509.13125},
}

@article{ryser1951combinatorial,
  title={A combinatorial theorem with an application to {L}atin rectangles},
  author={Ryser, H. J.},
  journal={Proc. Amer. Math. Soc.},
  volume={2},
  number={4},
  pages={550--552},
  year={1951},
  publisher={JSTOR}
}

@book {JLRbook,
  AUTHOR =       {Janson, S. and {\L}uczak, T. and Ruci{\'n}ski, A.},
  TITLE =        {Random graphs},
  PUBLISHER =    {Wiley-Interscience},
  YEAR =         2000,
}

@article{kwan2020almost,
  title={Almost all {S}teiner triple systems have perfect matchings},
  author={Kwan, M.},
  JOURNAL = {Proc. Lond. Math. Soc. (3)},
  volume={121},
  number={6},
  pages={1468--1495},
  year={2020},
  publisher={Wiley Online Library}
}

@article {trade_survey,
    AUTHOR = {Cavenagh, N. J.},
     TITLE = {The theory and application of {L}atin bitrades: a survey},
   JOURNAL = {Math. Slovaca},
  FJOURNAL = {Mathematica Slovaca},
    VOLUME = {58},
      YEAR = {2008},
    NUMBER = {6},
     PAGES = {691--718},
      ISSN = {0139-9918,1337-2211},
   MRCLASS = {05B15},
  MRNUMBER = {2453264},
MRREVIEWER = {G.\ H. J. Van Rees},
       DOI = {10.2478/s12175-008-0103-2},
       URL = {https://doi.org/10.2478/s12175-008-0103-2},
}

@article {num_rect,
    AUTHOR = {Luria, Z. and Simkin, M.},
     TITLE = {On the threshold problem for {L}atin boxes},
   JOURNAL = {Random Structures Algorithms},
  FJOURNAL = {Random Structures \& Algorithms},
    VOLUME = {55},
      YEAR = {2019},
    NUMBER = {4},
     PAGES = {926--949},
      ISSN = {1042-9832,1098-2418},
   MRCLASS = {94A60 (94A62)},
  MRNUMBER = {4025395},
       DOI = {10.1002/rsa.20855},
       URL = {https://doi.org/10.1002/rsa.20855},
}

@book {num_squares,
    AUTHOR = {van Lint, J. H. and Wilson, R. M.},
     TITLE = {A course in combinatorics},
   EDITION = {Second},
 PUBLISHER = {Cambridge University Press, Cambridge},
      YEAR = {2001},
     PAGES = {xiv+602},
      ISBN = {0-521-00601-5},
   MRCLASS = {05-01 (90B10)},
  MRNUMBER = {1871828},
       DOI = {10.1017/CBO9780511987045},
       URL = {https://doi.org/10.1017/CBO9780511987045},
}

@article {Pitt,
    AUTHOR = {Pittenger, A. O.},
     TITLE = {Mappings of {L}atin squares},
   JOURNAL = {Linear Algebra Appl.},
  FJOURNAL = {Linear Algebra and its Applications},
    VOLUME = {261},
      YEAR = {1997},
     PAGES = {251--268},
      ISSN = {0024-3795,1873-1856},
   MRCLASS = {05B15},
  MRNUMBER = {1448875},
MRREVIEWER = {Charles\ C.\ Lindner},
       URL =
              {http://www.sciencedirect.com/science?_ob=GatewayURL&_origin=MR&_method=citationSearch&_piikey=s0024379596004089&_version=1&md5=8f77dde5dd9117f120df735f9e7bef97},
}

@article {completion_survey,
    AUTHOR = {Donovan, D.},
     TITLE = {The completion of partial {L}atin squares},
   JOURNAL = {Australas. J. Combin.},
  FJOURNAL = {The Australasian Journal of Combinatorics},
    VOLUME = {22},
      YEAR = {2000},
     PAGES = {247--264},
      ISSN = {1034-4942,2202-3518},
   MRCLASS = {05B15},
  MRNUMBER = {1795340},
MRREVIEWER = {G.\ H. J. Van Rees},
}

@article {thresh1,
    AUTHOR = {Sah, A. and Sawhney, M. and Simkin, M.},
     TITLE = {Threshold for {S}teiner triple systems},
   JOURNAL = {Geom. Funct. Anal.},
  FJOURNAL = {Geometric and Functional Analysis},
    VOLUME = {33},
      YEAR = {2023},
    NUMBER = {4},
     PAGES = {1141--1172},
      ISSN = {1016-443X,1420-8970},
   MRCLASS = {05B07 (60C05)},
  MRNUMBER = {4616696},
MRREVIEWER = {Luc\ Teirlinck},
       DOI = {10.1007/s00039-023-00639-6},
       URL = {https://doi.org/10.1007/s00039-023-00639-6},
}

@article {thresh2,
    AUTHOR = {Kang, D. Y. and Kelly, T. and K\"uhn, D. and
              Methuku, A. and Osthus, D.},
     TITLE = {Thresholds for {L}atin squares and {S}teiner triple systems:
              bounds within a logarithmic factor},
   JOURNAL = {Trans. Amer. Math. Soc.},
  FJOURNAL = {Transactions of the American Mathematical Society},
    VOLUME = {376},
      YEAR = {2023},
    NUMBER = {9},
     PAGES = {6623--6662},
      ISSN = {0002-9947,1088-6850},
   MRCLASS = {05B15 (05B05 05B07 05C80)},
  MRNUMBER = {4630786},
MRREVIEWER = {Carl\ Johan\ Casselgren},
       DOI = {10.1090/tran/8954},
       URL = {https://doi.org/10.1090/tran/8954},
}

@inproceedings {thresh3,
    AUTHOR = {Jain, V. and Pham, H. T.},
     TITLE = {Optimal thresholds for {L}atin squares, {S}teiner triple
              systems, and edge colorings},
 BOOKTITLE = {Proceedings of the 2024 {A}nnual {ACM}-{SIAM} {S}ymposium on
              {D}iscrete {A}lgorithms ({SODA})},
     PAGES = {1425--1436},
 PUBLISHER = {SIAM, Philadelphia, PA},
      YEAR = {2024},
      ISBN = {978-1-61197-791-2},
   MRCLASS = {68Q87},
  MRNUMBER = {4699302},
       DOI = {10.1137/1.9781611977912.57},
       URL = {https://doi.org/10.1137/1.9781611977912.57},
}

@article {spread_threshold,
    AUTHOR = {Frankston, K. and Kahn, J. and Narayanan, B. and
              Park, J.},
     TITLE = {Thresholds versus fractional expectation-thresholds},
   JOURNAL = {Ann. of Math. (2)},
  FJOURNAL = {Annals of Mathematics. Second Series},
    VOLUME = {194},
      YEAR = {2021},
    NUMBER = {2},
     PAGES = {475--495},
      ISSN = {0003-486X,1939-8980},
   MRCLASS = {05C80 (60C05 82B26)},
  MRNUMBER = {4298747},
MRREVIEWER = {Tatyana\ S.\ Turova},
       DOI = {10.4007/annals.2021.194.2.2},
       URL = {https://doi.org/10.4007/annals.2021.194.2.2},
}

@article {thresh4,
    title = {The optimal edge-colouring threshold},
    author = {Keevash, P.},
    journal = {arXiv:2212.04397},
    year = {2022},
}

@misc{KellyConj,
    author = {Kelly, T.},
    note = {“Nibble Methods in Graph Theory.” In Topics in Probabilistic Graph Theory, edited by R. J. Wilson, L. W. Beineke, and
C. McDiarmid. Cambridge University Press. Forthcoming}
}

@article{decomp,
    author = {Bowtell, C. and Montgomery, R.},
    title = {Almost every Latin square has a decomposition into transversals},
    journal = {arXiv:2501.05438},
    year = {2025},
}

\end{document}